\newtheorem{theorem}{Theorem}
\newtheorem*{claim*}{Claim}
\newtheorem*{theorema}{Theorem A}
\newtheorem*{theoremb}{Theorem B}
\newtheorem*{theoremc}{Theorem C}
\newtheorem*{theoremd}{Theorem D}
\newtheorem*{theoreme}{Theorem E}
\newtheorem*{theoremf}{Theorem F}
\newtheorem{definition}{Definition}
\newtheorem*{maindefinition}{Main definition}
\newtheorem{example}{Example}
\newtheorem{lemma}{Lemma}
\newtheorem{criterion}{Criterion}
\newtheorem*{lemma*}{Lemma}
\newtheorem{proposition}{Proposition}
\newtheorem*{proposition*}{Proposition}
\newtheorem{remark}{Remark}
\newcounter{conditions1}
\newcounter{conditions2}
\definecolor{light-gray}{gray}{0.95} 
\definecolor{orange}{RGB}{255,127,0} 
\definecolor{orange}{cmyk}{0,0.5,1,0} 
\definecolor{blue}{RGB}{0,0,255}
\newsavebox{\fminipagebox}
\NewDocumentEnvironment{fminipage}{m O{\fboxsep}}
 {\par\kern#2\noindent\begin{lrbox}{\fminipagebox}
  \begin{minipage}{#1}\ignorespaces}
 {\end{minipage}\end{lrbox}%
  \makebox[#1]{%
    \kern\dimexpr-\fboxsep-\fboxrule\relax
    \fbox{\usebox{\fminipagebox}}%
    \kern\dimexpr-\fboxsep-\fboxrule\relax
  }\par\kern#2
 }
\begin{document}
\title{Dirac products and concurring Dirac structures}
\author{Pedro Frejlich}
\address{UFRGS, Departamento de Matem\'atica Pura e Aplicada, Porto Alegre, Brasil}
\email{frejlich.math@gmail.com}
\author{David Mart\'inez Torres}
\address{Universidad Politécnica de Madrid, ETSAM section}
\email{df.mtorres@upm.es}

\begin{abstract}
 We discuss in this note two dual canonical operations on Dirac structures $L$ and $R$ --- the \emph{tangent product} $L \star R$ and the \emph{cotangent product} $L \circledast R$. Our first result gives an explicit description of the leaves of $L \star R$ in terms of those of $L$ and $R$, surprisingly ruling out the pathologies which plague general ``induced Dirac structures''.
 
 In contrast to the tangent product, the more novel contangent product $L \circledast R$ need not be Dirac even if smooth. When it is, we say that $L$ and $R$ \emph{concur}. 
 Concurrence captures commuting Poison structures, refines the \emph{Dirac pairs} of Dorfman and Kosmann-Schwarzbach, and it is our proposal as the natural notion of ``compatibility'' between Dirac structures. 
 
 The rest of the paper is devoted to illustrating the usefulness of tangent- and cotangent products in general, and the notion of concurrence in particular. Dirac products clarify old constructions in Poisson geometry, characterize Dirac structures which can be pushed forward by a smooth map, and mandate a version of a local normal form. Magri and Morosi's $P\Omega$-condition and Vaisman's notion of two-forms complementary to a Poisson structures are found to be instances of concurrence, as is the setting for the Frobenius-Nirenberg theorem. We conclude the paper with an interpretation in the style of Magri and Morosi of generalized complex structures which concur with their conjugates.
\end{abstract}

\maketitle

\section{Introduction}

\emph{Dirac structures}\footnote{We refer the reader to \cite{Gualtieri,Bursztyn,Meinrenken} for introductions to Dirac geometry.} are simultaneous generalizations of closed two-forms, foliations and Poisson structures. Heuristically, a Dirac structure $L$ on a manifold $M$ is a smooth partition of $M$ into initial submanifolds, 
on which there are induced, smoothly varying, closed two-forms. More formally, a Dirac structure $L$ is a subbundle of the \emph{generalized tangent bundle} $\mathbb{T} M:=TM \oplus T^*M$, which is Lagrangian for the canonical, symmetric bilinear pairing
 \begin{align}
\langle\cdot,\cdot\rangle:\mathbb{T} M \times \mathbb{T} M \longrightarrow \mathbb{R}, \quad \langle u+\xi,v+\eta\rangle:=\iota_v\xi+\iota_u\eta,
 \end{align}and whose space of sections $\Gamma(L)$ is involutive under the \emph{Dorfman bracket}:
  \begin{align}
[u+\xi,v+\eta]:=[u,v]+\mathscr{L}_u\eta - \iota_v\mathrm{d}\xi.
 \end{align}A Dirac structure $L$ is in particular a \emph{Lie algebroid} --- that is, $[\cdot,\cdot]$ turns $\Gamma(L)$ into a Lie algebra, and the restriction of the canonical projection $\mathrm{pr}_T:\mathbb{T} M \to TM$ to $L$ defines a homomorphism of Lie algebras from $\Gamma(L)$ into the space of vector fields. 
 The tangent space $T_xS$ to a leaf $S$ of this Lie algebroid is $\mathrm{pr}_T(L_x)$, and the leaf inherits a closed two-form given by $\omega_S(u,v) = \langle u+\xi,v\rangle$ for any representative $u+\xi \in L$.
 
Since their inception, Dirac structures have been remarkably useful in successfully tackling problems in pure Poisson geometry. Part of it may be a manifestation of Weinstein's "everything is a Lagrangian" metaprinciple \cite{Wein_Everything}, but, more concretely, it stems from the fact that, very often, the methods and models required to address a purely Poisson-theoretic problem lie outside the realm of Poisson geometry proper (see \cite{PT1,Meinrenken} for some manifestations of this). An instance of this phenomenon concerns the various circumstances under which a submanifold $X$ of a Poisson manifold $(M,\pi)$ ``inherits'' a geometric structure from its ambient manifold. Now, while we cannot in general speak of the ``pullback'' of the Poisson structure by the inclusion $i:X \to M$, the \emph{pointwise} notion of what ``induced'' ought to mean is mandated. By pointwise we mean that any submanifold has an induced \emph{Lagrangian family} $i^!(\mathrm{Gr}(\pi)) \subset \mathbb{T}X$ --- that is, a subspace that meets every fibre in a Lagrangian vector space; its smoothness is not ensured, but, should $i^!(\mathrm{Gr}(\pi))$ be smooth, then it is automatically a Dirac structure. 
\vspace{0.3cm}

The goal of the present note is to discuss two canonical  operations on Dirac structures: the \emph{tangent-} and the \emph{cotangent products}
\begin{align*}
& (L,R) \mapsto L \star R, & (L,R) \mapsto L \circledast R
\end{align*}
of two Dirac structures $L,R \subset \mathbb{T}M$. Truth be told, the operations $\star$ and $\circledast$ are actual products (as opposed to relations) only on the space $\mathrm{Lag}(\mathbb{T} M)$ of Lagrangian families\footnote{This is a recurring theme in the theory of Dirac structures: it is often best to decouple the linear algebraic requirements from the differential geometric ones.}:
\begin{align*}
\star,\circledast:\mathrm{Lag}(\mathbb{T} M) \times \mathrm{Lag}(\mathbb{T} M) \longrightarrow \mathrm{Lag}(\mathbb{T} M).
\end{align*}

Tangent products have been studied in the literature before, see \cite[Section 2.3]{ABM} and \cite[Section 2.5]{Gualtieri}, the latter under the name \emph{tensor products}. Our choice of terminology reflects the fact, as we will explain below, that these products are algebraically dual to one another, in a manner directly analogous to the duality between $TM$ and $T^*M$. A crucial feature of the tangent product is that it has a ``geometric construction'', namely: the tangent product $L \star M$ is nothing but the pullback under the diagonal map $\Delta:M \to M \times M$ of the product Dirac structure $L \times R$:
\begin{align*}
 L \star R = \Delta^!(L \times R).
\end{align*}
 Our first main result concerns the (in general delicate) description of the leaves of tangent products:

\begin{theorema}
If the tangent product of two Dirac structures is again Dirac, then the leaves of one structure meet the leaves of the other cleanly, and (the connected components of) the intersections of leaves are the leaves of the tangent product.
\end{theorema}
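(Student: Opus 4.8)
The plan is to reduce the statement to a computation of the tangent distribution of $L\star R$, and then to promote this infinitesimal information — using the integrability that comes for free from the Dirac condition — to an honest statement about leaves. First I would read off the tangent distribution of $L\star R$ from the pointwise formula furnished by the diagonal pullback $L\star R=\Delta^!(L\times R)$, namely $(L\star R)_x=\{X+\xi+\eta : X+\xi\in L_x,\ X+\eta\in R_x\}$. Projecting to $TM$ gives immediately $\mathrm{pr}_T(L\star R)_x=\mathrm{pr}_T(L_x)\cap\mathrm{pr}_T(R_x)$, so the tangent distribution of $L\star R$ is the pointwise intersection of the tangent distributions of $L$ and $R$; equivalently $T_xS_\star=T_xS_L\cap T_xS_R$ at every point, where $S_L,S_R,S_\star$ denote the leaves through $x$ of $L$, $R$ and $L\star R$ respectively.

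Next I would use that, being Dirac, $L\star R$ is a Lie algebroid, so its tangent distribution is integrable in the Stefan--Sussmann sense and $S_\star$ is a genuine (initial) leaf. Since every section of $L\star R$ projects to a vector field with values in $\mathrm{pr}_T(L)\cap\mathrm{pr}_T(R)$, its flow is tangent both to the foliation of $L$ and to that of $R$; hence the $L\star R$-leaf through any $y$ is contained in the $L$-leaf and in the $R$-leaf through $y$. Applying this at every $y\in S_L\cap S_R$ shows that $S_L\cap S_R$ is a union of entire $L\star R$-leaves, each satisfying $T_yS_\star=T_yS_L\cap T_yS_R$.

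The crux --- and the main obstacle --- is to upgrade this to the clean-intersection statement, that is, to prove that near $x$ the set $S_L\cap S_R$ is a submanifold. Granting this, the always-valid inclusion $T_x(S_L\cap S_R)\subseteq T_xS_L\cap T_xS_R$, together with $S_\star\subseteq S_L\cap S_R$ and $T_xS_\star=T_xS_L\cap T_xS_R$, forces $T_x(S_L\cap S_R)=T_xS_L\cap T_xS_R$ (cleanness) and $\dim(S_L\cap S_R)=k$, where $k:=\dim(T_xS_L\cap T_xS_R)$; consequently each leaf $S_\star$ is open in, hence a connected component of, $S_L\cap S_R$. To gain control of the manifold structure I would first pin down the dimension. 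Along $S_L\cap S_R$ the leaf dimensions $\dim S_L=d_L$ and $\dim S_R=d_R$ are constant, so $\dim(T_yS_L\cap T_yS_R)=d_L+d_R-\dim(T_yS_L+T_yS_R)\le k$ near $x$, by lower semicontinuity of $y\mapsto\dim(T_yS_L+T_yS_R)$; meanwhile lower semicontinuity of the anchor rank of the Lie algebroid $L\star R$ gives $\dim S_\star=\mathrm{rank}\,\mathrm{pr}_T(L\star R)_y\ge k$. Together these force the intersecting leaves to have constant dimension $k$ near $x$.

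The remaining, genuinely delicate, point is that the presence of a full $k$-dimensional integral leaf through \emph{every} point of $S_L\cap S_R$ is what rules out the tangency pathologies (two surfaces tangent along a curve, a parabola tangent to a line, and so forth) that would otherwise spoil cleanness. Working in embedded plaques $P_L,P_R$ through $x$ and in a Stefan--Sussmann normal-form chart for the foliation $\mathcal{F}_\star$ of $L\star R$, I would argue that $P_L\cap P_R$ coincides near $x$ with the single $k$-dimensional plaque $S_\star$: any $C^1$ curve in $P_L\cap P_R$ has velocity in $T_yS_L\cap T_yS_R=T_yS_\star$ and therefore cannot escape that plaque, while conversely $S_\star$ already realizes all of $P_L\cap P_R$ in a neighbourhood by the constant-rank conclusion above. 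This final passage, converting integrability of the intersection distribution into an honest submanifold, is where I expect the real work to lie.
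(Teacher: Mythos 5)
Your strategy tracks the paper's proof fairly closely in outline: you compute $\mathrm{pr}_T(L\star R)_y=\mathrm{pr}_T(L_y)\cap\mathrm{pr}_T(R_y)$, show each leaf of $L\star R$ is contained in an intersection of leaves, establish that $\dim(T_yS_L\cap T_yS_R)$ is locally constant along $S_L\cap S_R$ by playing two semicontinuities against each other (a nice, somewhat more elementary substitute for the paper's Steps 1--2, which instead pull $L\star R$ back to the leaves of $L$ and of $R$ and compare the three induced partitions), and you correctly reduce everything to the statement that through every point of $S_L\cap S_R$ there passes a $k$-dimensional leaf of $L\star R$, contained in $S_L\cap S_R$, with tangent space exactly $T_yS_L\cap T_yS_R$.

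The genuine gap is at the final step, which you flag yourself but then dispatch too quickly. The argument ``any $C^1$ curve in $P_L\cap P_R$ has velocity in $T_yS_L\cap T_yS_R$ and therefore cannot escape the plaque of $S_\star$'' only controls points of $P_L\cap P_R$ joined to $x$ by smooth curves lying \emph{inside} $P_L\cap P_R$; it says nothing about the rest of the set. A priori $S_L\cap S_R$ could consist of many $k$-dimensional leaves accumulating at $x$ --- this genuinely happens (e.g.\ two foliations of $T^3$ by tori and by dense irrational planes, where a single intersection of leaves is a dense countable union of lines), so your claim that $P_L\cap P_R$ ``coincides near $x$ with the single plaque $S_\star$'' is false as stated; worse, nothing in your argument excludes an \emph{uncountable} transverse family of plaques, which would violate the local countability built into the definition of a clean intersection of leafy submanifolds. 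What you have actually verified is precisely the hypothesis of condition 3 of Lemma \ref{lem : clean iff}: $S_L\cap S_R$ is a disjoint union of (possibly uncountably many) leafy submanifolds $W$ with $T_zW=T_zS_L\cap T_zS_R$. The nontrivial content --- that this already forces cleanness, i.e.\ a local normal form with locally countably many embedded components --- is exactly what the paper outsources to \cite[Proposition 5.7]{BFT}, and your curve argument is not a substitute for it. To complete the proof you must either invoke that result or reprove it.
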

In the description of the tangent product as a pullback/induced Dirac structure one would surmise that $L \star R$ would be subject to the recently discovered ``jumping phenomenon'' \cite{BFT} --- in which a leaf of the Dirac structure induced on a submanifold need \emph{not} be entirely contained in a leaf of the ambient manifold. Theorem A conveys the rather pleasant surprise that the tangent product ``filters out'' non-clean intersections. (See Section \ref{sec : Cleanness of tangent products} for more details.)

In any case, the algebraic symmetry between tangent and cotangent products is broken when we take into account the involutivity condition: while the smoothness of the tangent product $L \star R$ of two Dirac structures is enough to ensure that it is a Dirac structure, this is no longer true for the cotangent product $L \circledast R$, which need not be Dirac even if it is smooth. This in fact brings us to the core message of the paper: 

\begin{minipage}{0.95\textwidth}
  {\small
\begin{mdframed}[backgroundcolor=olive!10]
The natural notion of ``compatibility'' between Dirac structures $L$ and $R$ is the condition that $L \circledast R$ itself be a Dirac structure, in which case we say that $L$ and $R$ {\bf concur}.
\end{mdframed}
}
\end{minipage}

\vspace{0.2cm}

A different notion of ``compatible Dirac structures'' --- namely, that of \emph{Dirac pairs} --- had been proposed in the seminal work of I. Dorfman \cite{Dorfman_book}, and developed by Y. Kosmann-Schwarzbach \cite{Kosmann}. Our proposal is a refinement of theirs:
\begin{theoremb}
 Concurring Dirac structures form a Dirac pair,
\end{theoremb}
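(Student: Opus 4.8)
The plan is to unwind both notions into statements about the Dorfman bracket and then feed in the single hypothesis at our disposal, namely that $\Gamma(L \circledast R)$ is involutive. Recall (Dorfman \cite{Dorfman_book}, Kosmann-Schwarzbach \cite{Kosmann}) that $L$ and $R$ form a Dirac pair precisely when the Dorfman bracket of a section of $L$ with a section of $R$ lands in $\Gamma(L) + \Gamma(R)$; since $L$ and $R$ are already involutive, this is the same as asking that $\Gamma(L)+\Gamma(R)$ be closed under $[\cdot,\cdot]$. Concurrence, on the other hand, is by definition the assertion that $L \circledast R$ is a Dirac structure; as $L \circledast R$ is a Lagrangian family for purely linear-algebraic reasons and smoothness is built into the hypothesis, concurrence is exactly the involutivity of $\Gamma(L \circledast R)$.

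First I would record the shape of a section of $L \circledast R$: it is of the form $u + v + \xi$ with $u + \xi \in \Gamma(L)$ and $v + \xi \in \Gamma(R)$. Bracketing two such sections $P_i = u_i + v_i + \xi_i$ and expanding by bilinearity, the cotangent part of $[P_1,P_2]$ is $(\mathscr{L}_{u_1}\xi_2 - \iota_{u_2}\mathrm{d}\xi_1) + (\mathscr{L}_{v_1}\xi_2 - \iota_{v_2}\mathrm{d}\xi_1)$, which is exactly the sum $\zeta + \zeta'$ of the cotangent parts of the bracket $[u_1+\xi_1,\,u_2+\xi_2]$ computed inside $L$ and of $[v_1+\xi_1,\,v_2+\xi_2]$ computed inside $R$. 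Since $L$ and $R$ are themselves Dirac, $[u_1,u_2]+\zeta \in \Gamma(L)$ and $[v_1,v_2]+\zeta' \in \Gamma(R)$; subtracting these two ``diagonal'' brackets from $[P_1,P_2]$ cancels the \emph{entire} cotangent part and leaves the purely tangential cross term $[u_1,v_2]+[v_1,u_2]$, i.e.\ the tangent part of the difference of mixed brackets $[u_1+\xi_1,v_2+\xi_2]-[u_2+\xi_2,v_1+\xi_1]$. Thus involutivity of $L \circledast R$ delivers the clean identity that this cross term differs from a section of $L \circledast R$ only by a section of $L$ and a section of $R$.

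The remaining work is to promote this to the full Dirac-pair condition $[\Gamma(L),\Gamma(R)] \subseteq \Gamma(L)+\Gamma(R)$. Here I would exploit that the failure of this inclusion is tensorial: using the Leibniz rule for $[\cdot,\cdot]$ together with the isotropy of $L$ and of $R$ to absorb the derivative terms, the obstruction descends to a genuine bundle map out of $L \times R$ into a quotient of $\mathbb{T}M$ by $L+R$, so that it suffices to evaluate it on the sections produced above. Multiplying the two factors of a section $u+v+\xi$ of $L \circledast R$ by independent functions then decouples the artificially matched cotangent parts $\xi$, and lets me recover the value of the obstruction on arbitrary pairs from its value on matched ones; combined with the cross-term identity of the previous paragraph this forces the obstruction to vanish, which is precisely the Dirac-pair condition.

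The hard part, as this outline makes plain, is the decoupling step: a section of $L \circledast R$ only ever exposes covectors in $\mathrm{pr}_{T^*M}(L)\cap\mathrm{pr}_{T^*M}(R)$ and only ever the antisymmetric combination $[u_1,v_2]+[v_1,u_2]$ of mixed tangent brackets, whereas a Dirac pair is a condition on all sections of $L$ tested against all sections of $R$. Bridging this gap is where I expect to spend the effort, and the tensoriality-and-spanning argument above is the mechanism I would use to close it. A final point to keep honest is that the proof must invoke only a \emph{consequence} of involutivity and not its full strength — concurrence is strictly stronger than being a Dirac pair — so I will deliberately retain only the $\Gamma(L)+\Gamma(R)$-membership and discard the sharper information, already visible in the computation above, that the cotangent parts of $[P_1,P_2]$ match on the nose.
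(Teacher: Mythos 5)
Your argument is aimed at the wrong target, and the gap this creates is not repairable along the lines you propose. You take as the definition of a Dirac pair the condition $[\Gamma(L),\Gamma(R)]\subseteq\Gamma(L)+\Gamma(R)$. That is not the notion used here (nor in Dorfman or Kosmann-Schwarzbach): a Dirac pair is defined by the vanishing of the torsion $\mathfrak{y}_{L,R}$ of Definition \ref{def : Dirac pairs}, a condition tested only on the special $7$-tuples of $\mathscr{P}(L,R)$, i.e.\ on sections whose tangent and cotangent components are already matched through the relations $L\diamondsuit_T R$ and $L\diamondsuit_{T^*}R$. Worse, the membership condition you aim at is genuinely \emph{false} for concurring Dirac structures, so the ``decoupling step'' you flag as the hard part cannot be carried out. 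Concretely, on $M=\mathbb{R}^3$ take $L=\mathrm{Gr}(\mathscr{F}_L)$ and $R=\mathrm{Gr}(\mathscr{F}_R)$ with $T\mathscr{F}_L=\langle\tfrac{\partial}{\partial x},\tfrac{\partial}{\partial y}\rangle$ and $T\mathscr{F}_R=\langle\tfrac{\partial}{\partial x},\tfrac{\partial}{\partial z}\rangle$. These concur, since $T\mathscr{F}_L+T\mathscr{F}_R=TM$ and $L\circledast R=TM$ is Dirac; yet $x\tfrac{\partial}{\partial y}\in\Gamma(L)$, $\mathrm{d} y\in\Gamma(R)$, and
\begin{align*}
 \left[x\tfrac{\partial}{\partial y},\mathrm{d} y\right]=\mathscr{L}_{x\frac{\partial}{\partial y}}\mathrm{d} y=\mathrm{d} x,
\end{align*}
which does not lie in $\Gamma(L)+\Gamma(R)$ because $\mathrm{pr}_{T^*}(L+R)=N^*\mathscr{F}_L+N^*\mathscr{F}_R=\langle \mathrm{d} y,\mathrm{d} z\rangle$. (The paper's example of a Dirac pair that does not concur weakly shows the other direction of your claimed equivalence fails as well.)

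What is salvageable is precisely your first computational paragraph, which is where the paper's proof actually lives. Given a tuple $(u_L,u_R,v_L,v_R,\zeta_R,\zeta_{LR},\zeta_L)\in\mathscr{P}(L,R)$, the matching data supply $\xi,\eta\in\Omega^1(M)$ and $w,z\in\mathfrak{X}(M)$ with $u_L+\xi,\,v_L+\eta,\,w+\zeta_{LR},\,z+\zeta_L\in\Gamma(L)$ and $u_R+\xi,\,v_R+\eta,\,w+\zeta_R,\,z+\zeta_{LR}\in\Gamma(R)$, hence three sections $a_{\circledast}=u_L+u_R+\xi$, $b_{\circledast}=v_L+v_R+\eta$, $c_{\circledast}=w+z+\zeta_{LR}$ of $L\circledast R$. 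Expanding $0=\langle[a_{\circledast},b_{\circledast}],c_{\circledast}\rangle$ exactly as in your decomposition into two diagonal brackets plus the cross term, and then using isotropy and involutivity of $L$ and of $R$ to trade $\langle[u_L+\xi,v_L+\eta],z\rangle$ for $-\langle[u_L,v_L],\zeta_L\rangle$ (and likewise on the $R$ side), yields $\mathfrak{y}_{L,R}=0$ on the nose. No tensoriality or decoupling argument is needed: the Dirac-pair condition only ever interrogates matched tuples, which is exactly why the implication holds while concurrence remains strictly stronger.
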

\noindent and we will argue that the notion of concurrence is the ``canonical'' one in the Dirac context. When both $L$ and $R$ correspond to Poisson structures, concurrence and being Dirac pairs coincide, and select commuting Poisson structures, but they already differ if $L$ and $R$ correspond to foliations: in that case, our proposal boils down to requiring that the sum of their tangent bundles be the tangent bundle to a foliation. This motivates the language\footnote{From Webster's Revised Unabridged Dictionary of the English Language of 1913:
\begin{minipage}{0.95\textwidth}
Con.cur, v. i. imp. \& p. p. Concurred (?); p. pr. \& vb. n. Concurring. L. \emph{concurrere} to run together, agree; \emph{con-} + \emph{currere} to run. See Current.
\begin{multicols}{2}
1. To run together; to meet. \emph{Obs.} ``Anon they fierce encountering both concurred/With grisly looks and faces like their fates.'' J. Hughes. \ 2. To meet in the same point; to combine or conjoin; to contribute or help toward a common object or effect. ``When outward causes concur.'' Jer. Colier. \ 3. 
To unite or agree (in action or opinion); to join; to act jointly; to agree; to coincide; to correspond. ``Mr. Burke concurred with Lord Chatham in opinion.'' Fox. ``Tories and Whigs had concurred in paying honor to Walker.'' Makaulay. ``This concurs directly with the letter.'' Shak. \ 4. To assent; to consent. \emph{Obs.} Milton.
\end{multicols}
Syn. -- To agree; unite; combine; conspire; coincide; approve; acquiesce; assent. 
\end{minipage}
}
we employ.

The remainder of the paper is devoted to illustrating not only the usefulness of the tangent and cotangent products as canonical tools which unify and clarify various phenomena, but also how the notion of concurrence had already appeared in the literature under various guises. 

The formulation of Libermann's theorem in Dirac language \cite{Frejlich_Marcut} is one of the motivating examples underpinning the notion of concurrence: a Dirac structure on the total space of a simple foliation pushes forward to a Dirac structure on its leaf space exactly when it concurs with the Dirac structure corresponding to the foliation. This interpretation of concurrence with foliations leads to an pleasantly ``obvious'' version of the local normal form for Dirac structures of \cite{Blohmann}:
\begin{theoremc}
 In the neighborhood of any point, a Dirac structure is the gauge transform of the pullback of a Poisson structure by a surjective submersion.
\end{theoremc}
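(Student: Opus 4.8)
The plan is to produce, in a neighbourhood $U$ of a given point $x_0$, a closed two-form $B$, a surjective submersion $p\colon U \to N$, and a Poisson structure $\pi$ on $N$, such that $L = e^{B}\,p^{!}\mathrm{Gr}(\pi)$ on $U$. Since gauge transforms are invertible, this is the same as arranging that the gauged structure $L' := e^{-B}L$ be the pullback $p^{!}\mathrm{Gr}(\pi)$ of a Poisson structure by a submersion. Two features characterise such pullbacks and will organise the argument. First, a direct computation gives $p^{!}\mathrm{Gr}(\pi)\cap TU = \ker \mathrm{d}p$: if $v + (\mathrm{d}p)^{*}\eta$ lies in $TU$ then $(\mathrm{d}p)^{*}\eta = 0$, so $\eta = 0$ as $\mathrm{d}p$ is onto, and $v \in \ker\mathrm{d}p$. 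Thus the kernel of the pullback is the tangent distribution of the fibration $\mathcal{F}$ of $p$. Second, $\mathrm{Gr}(\pi)$ is recovered as the pushforward of $p^{!}\mathrm{Gr}(\pi)$ along $p$. I will therefore manufacture $B$ so that the kernel of $L'$ becomes a simple foliation, check that $L'$ concurs with that foliation, and read off $\pi$ as the pushforward.

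First I would reduce to the situation in which the presymplectic leaf $S$ through $x_0$ carries the zero two-form. The leaf form $\omega_{S}$ is closed, so by the relative Poincar\'e lemma it extends to a closed two-form $B$ on a neighbourhood $U$ of $x_0$; replacing $L$ by $L' = e^{-B}L$ changes the induced form on $S$ to $\omega_{S} - i_{S}^{*}B = 0$ without altering the leaves, so that $T_{x_0}S \subseteq L'\cap TU$. The real work is to choose $B$ so that, in addition, the kernel $K := L'\cap TU$ has locally constant rank and hence integrates to a simple foliation $\mathcal{F}$ with $T\mathcal{F}=K$, with which $L'$ concurs. Here the freedom in the closed extension must be used to trivialise the leafwise forms of \emph{all} leaves near $x_0$ at once, not merely that of $S$ --- otherwise $K$ may drop rank on nearby, genuinely presymplectic, leaves. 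This regularisation is the neighbourhood-of-$x_0$ content of a Weinstein-type splitting, which separates the symplectic and null directions of $\omega_{S}$ and leaves a transversal on which $x_0$ is a point-leaf; I expect this step --- producing a single closed $B$ after which $K$ is constant-rank and $L'$ concurs with $\mathcal{D}_{\mathcal{F}}$ --- to be the main obstacle.

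Granting that $L'$ concurs with the foliation Dirac structure $\mathcal{D}_{\mathcal{F}}$, the Libermann-type interpretation of concurrence with foliations furnishes a pushforward Dirac structure $\overline{L} := p_{*}L'$ on $N := U/\mathcal{F}$, where $p\colon U \to N$ is the quotient submersion. Because we have quotiented precisely by the kernel, $\overline{L}\cap TN = 0$: if $\mathrm{d}p(v) + 0 \in \overline{L}$ then some lift $v' + 0$ lies in $L'\cap TU = \ker\mathrm{d}p$, whence $\mathrm{d}p(v) = \mathrm{d}p(v') = 0$. A Lagrangian family meeting $TN$ trivially is the graph of a bivector, and being Dirac this bivector is a Poisson structure $\pi$; thus $\overline{L} = \mathrm{Gr}(\pi)$.

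Finally I would identify $L'$ with $p^{!}\mathrm{Gr}(\pi)$. Unwinding the definitions of pushforward and pullback along $p$ yields $p^{!}p_{*}L' = \{\,u+\xi \in L' : \xi \in \mathrm{Ann}(\ker\mathrm{d}p)\,\}$. But every $u+\xi \in L'$ already satisfies $\xi \in \mathrm{Ann}(\ker\mathrm{d}p)$: for any $v \in \ker\mathrm{d}p = L'\cap TU \subseteq L'$ one has $\langle u+\xi, v\rangle = \iota_{v}\xi = 0$ by isotropy of $L'$, so $\xi$ annihilates $\ker\mathrm{d}p$. Hence $p^{!}p_{*}L' = L'$, that is $e^{-B}L = p^{!}\mathrm{Gr}(\pi)$, and therefore $L = e^{B}\,p^{!}\mathrm{Gr}(\pi)$ on $U$, as claimed.
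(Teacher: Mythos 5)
Your proposal has a genuine gap at its central step, and you say so yourself: the construction of the closed two-form $B$ after which the kernel $K=\mathcal{R}_{-B}(L)\cap TU$ has constant rank is exactly what you defer as ``the main obstacle''. Worse, the normalization you aim for cannot work in general. You propose to extend $\omega_S$ and gauge it away, so that $T_{x_0}S\subset K$, and then to ``trivialise the leafwise forms of all leaves near $x_0$ at once'' so that $K$ becomes the tangent distribution of the characteristic foliation. But $\mathcal{R}_{-B}(L)\cap TM$ is always contained in $\mathrm{pr}_T(L)$, so achieving $K=\mathrm{pr}_T(L)$ would force the characteristic distribution to be a subbundle --- false already for $\pi=x_1\,\tfrac{\partial}{\partial x_1}\wedge\tfrac{\partial}{\partial x_2}$ at the origin. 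The direction of the normalization is also backwards: pointwise, $\mathcal{R}_{-B}(L)_x\cap T_xM=\ker\bigl(\omega_{L,x}-B|_{\mathrm{pr}_T(L_x)}\bigr)$, so gauging by an extension of the leafwise form \emph{maximizes} the kernel at $x_0$, which is the worst starting point for constant rank, since the kernel dimension is only upper semicontinuous.

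The paper's proof goes the opposite way and is essentially linear algebra plus semicontinuity: choose a one-form $\alpha$ so that at the single point $x$ the kernel of $\mathcal{R}_{-\mathrm{d}\alpha}(L)$ is \emph{minimal}, i.e.\ of dimension $0$ or $1$ according to the parity of $L$ (take $\mathrm{d}\alpha|_x$ to differ from the induced two-form by a form that is nondegenerate, or of corank one, on $\mathrm{pr}_T(L_x)$). Minimality at one point propagates to a neighborhood by upper semicontinuity and constancy of the parity. If the kernel is zero, $\mathcal{R}_{-\mathrm{d}\alpha}(L)$ is already the graph of a Poisson bivector; if it is a line field, it is automatically integrable and locally simple, and Lemma \ref{lem : PT form for Dirac structures} applies. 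Your remaining steps --- concurrence with $\mathrm{Gr}(\mathscr{F})$ because $T\mathscr{F}\subset L'$, the pushforward being a Poisson graph since it meets $TN$ trivially, and $p^!p_!(L')=L'$ --- do match the paper's Lemma \ref{lem : PT form for Dirac structures} and Proposition \ref{pro : libermann}, although your identification of $p^!p_!(L')$ tacitly uses the $p$-invariance of $L'$, which is where involutivity and connectedness of the fibres enter. None of this rescues the missing first step, which is the substance of the theorem.
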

One previous manifestation of concurrence in the literature was in the guise of the classical $P\Omega$ condition proposed by Magri and Morosi in the context of bihamiltonian structures \cite{Magri_Morosi,KS_Magri}; it  turns out to be also an instance of Vaisman's notion of two-forms complementary to a Poisson structure \cite{Vaisman}:
\begin{theoremd}
 A Poisson structure and a closed two-form concur exactly when they form a $P\Omega$-structure; this is also equivalent the the two-form being complementary to the Poisson structure.
\end{theoremd}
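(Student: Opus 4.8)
The plan is to reduce the statement to a tensorial identity by computing the cotangent product $L_\pi\circledast L_\Omega$ of the graphs $L_\pi=\mathrm{Gr}(\pi)$ and $L_\Omega=\mathrm{Gr}(\Omega)$ explicitly. A general element of $L_\pi$ has the form $\pi^\sharp\xi+\xi$ and one of $L_\Omega$ the form $w+\iota_w\Omega$; the cotangent product forces the cotangent parts to agree, $\xi=\iota_w\Omega=\Omega^\flat w$, and adds the tangent parts, so that with $N:=\pi^\sharp\Omega^\flat$ one gets
\begin{align*}
L_\pi\circledast L_\Omega=\{(N+\id)w+\iota_w\Omega:\ w\in TM\}.
\end{align*}
A one-line check shows the parametrizing map $w\mapsto(N+\id)w+\iota_w\Omega$ is fibrewise injective (if its value vanishes then $\iota_w\Omega=0$, whence $Nw=\pi^\sharp\Omega^\flat w=0$ and therefore $(N+\id)w=w=0$), so $L_\pi\circledast L_\Omega$ is \emph{always} a smooth Lagrangian subbundle of the expected rank. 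Consequently $\pi$ and $\Omega$ concur if and only if this subbundle is involutive for the Dorfman bracket.

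Next I would translate involutivity into the classical $P\Omega$-equations. Writing $s_X:=(N+\id)X+\iota_X\Omega$ for the section attached to a vector field $X$ --- an assignment that identifies $L_\pi\circledast L_\Omega$ with $TM$ --- I would expand the Dorfman bracket $[s_X,s_Y]$ and test its membership in $\Gamma(L_\pi\circledast L_\Omega)$, that is, whether it equals $s_Z$ for the (necessarily unique) $Z$ pinned down by its tangent component $[(N+\id)X,(N+\id)Y]$. Substituting the definitions and repeatedly using $\dd\Omega=0$ and the Jacobi identity $[\pi,\pi]=0$ to cancel terms, the obstruction collapses to the vanishing of the Nijenhuis torsion of $N=\pi^\sharp\Omega^\flat$ --- equivalently, to the vanishing of the Magri--Morosi concomitant of $(\pi,\Omega)$, which is exactly the defining condition of a $P\Omega$-structure. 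This yields the first equivalence.

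Finally, for the equivalence with Vaisman's complementarity I would compare defining formulas: once unwound, Vaisman's requirement that $\Omega$ be complementary to $\pi$ is the same pair of conditions, namely $\dd\Omega=0$ together with $N=\pi^\sharp\Omega^\flat$ being Nijenhuis, so it agrees termwise with the $P\Omega$-equations produced above. Alternatively one may route this equivalence through Theorem B, identifying the Lie-bialgebroid data that concurrence attaches to the resulting Dirac pair with the data Vaisman associates to a complementary form.

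I expect the Dorfman-bracket computation of the second paragraph to be the main obstacle: it is the only genuinely differential-geometric step, and the bookkeeping required to recognize the tangent component as precisely the Nijenhuis torsion of $N$ --- rather than some a priori larger expression --- depends on carefully exploiting both $\dd\Omega=0$ and $[\pi,\pi]=0$. Everything else is linear algebra (the explicit form of $L_\pi\circledast L_\Omega$ and its automatic smoothness) or a matter of matching definitions.
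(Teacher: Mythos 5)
Your setup coincides with the paper's own proof of this statement (Proposition \ref{pro : equivalent POmega}): parametrize $\mathrm{Gr}(\pi)\circledast\mathrm{Gr}(\omega)$ as $\{s_u=(\mathrm{id}+a)(u)+\omega^{\sharp}(u)\ |\ u \in TM\}$ with $a=\pi^{\sharp}\omega^{\sharp}$, note that this is automatically a Lagrangian subbundle (your fibrewise injectivity check is a clean way of justifying what the paper calls ``patent''), and reduce concurrence to the vanishing of the Courant tensor $\Upsilon(s_u,s_v,s_w)$. The gap is in the announced outcome of that computation. The obstruction is \emph{not} the Nijenhuis torsion of $N=\pi^{\sharp}\omega^{\sharp}$: carrying out the expansion as in the paper gives
\[
\Upsilon(s_u,s_v,s_w)=-\langle \mathrm{R}_{\omega}(u,v),w\rangle,\qquad \mathrm{R}_{\omega}(u,v)=\omega[u,v]^{a}-[\omega(u),\omega(v)]^{\pi}=[u,v]^{\omega\pi\omega}=\iota_v\iota_u\,\mathrm{d}(\omega\pi\omega),
\]
a \emph{covector}-valued obstruction (in fact a three-form), whose vanishing is exactly the $P\Omega$ condition that $\omega\pi\omega$ be closed. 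The Nijenhuis torsion is $\mathrm{N}(a)(u,v)=\pi^{\sharp}\mathrm{R}_{\omega}(u,v)$, i.e.\ the true obstruction pushed forward by $\pi^{\sharp}$; it is of the wrong type to equal $\Upsilon$, and its vanishing is a priori strictly weaker than $\mathrm{R}_{\omega}=0$ whenever $\pi^{\sharp}$ has kernel. So the chain ``obstruction $=$ Nijenhuis torsion of $N$ $=$ concomitant $=$ $P\Omega$'' breaks at its first link: as written you would prove that concurrence forces $N$ to be Nijenhuis, but not the converse, and the theorem would fail for degenerate $\pi$. The same conflation recurs in your treatment of Vaisman complementarity: that condition, $\omega[u,v]^{\pi^{\omega}}=[\omega(u),\omega(v)]^{\pi}$, unwinds (using $[u,v]^{\pi^{\omega}}=[u,v]^{\pi\omega}$) precisely to $\mathrm{R}_{\omega}=0$ again, not to ``$\mathrm{d}\omega=0$ plus $N$ Nijenhuis''. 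The repair is simply to keep the covector-valued tensor $\mathrm{R}_{\omega}$ all the way through rather than contracting it with $\pi^{\sharp}$; once you do, all three conditions in the statement become the single equation $\mathrm{R}_{\omega}=0$, and the proposed detour through Theorem B is unnecessary.
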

The scalar extension to complex numbers of the notions of tangent- and cotangent products is immediate, and can be applied to the complex Dirac structures of \cite{Aguero_Rubio}. We illustrate our methods in two cases: those of:
\begin{itemize}
 \item \emph{involutive structures}, --- that is, complex subbundles of $TM \otimes \mathbb{C}$ whose space of sections is involutive under the scalar extension of the Lie bracket of vector fields;
 \item \emph{generalized complex structures}, --- that is, complex Dirac structures $L \subset \mathbb{T}M \otimes \mathbb{C}$ which are transverse to their conjugates.
\end{itemize}

\begin{theoreme}
 The Frobenius-Nirenberg theorem \cite{Nirenberg} is valid exactly for those involutive structures $E \subset TM \otimes \mathbb{C}$ which concur with their conjugates.
\end{theoreme}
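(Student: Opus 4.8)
The plan is to translate the statement entirely into the Dirac dictionary and then read Nirenberg's hypotheses directly off the cotangent product. To an involutive structure $E \subset TM \otimes \C$ I would associate the foliation-type Lagrangian family $L_E := E \oplus \mathrm{Ann}(E) \subset \TT M \otimes \C$, where $\mathrm{Ann}(E) \subset T^*M \otimes \C$ is the annihilator. Since $\mathrm{pr}_T(L_E) = L_E \cap (TM \otimes \C) = E$, the family $L_E$ is smooth and Dorfman-involutive precisely when $E$ is a smooth involutive subbundle; thus the standing hypothesis that $E$ be an \emph{involutive structure} is exactly the statement that $L_E$ is a (complex) Dirac structure. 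Complex conjugation on $\TT M \otimes \C$ carries $L_E$ to $L_{\bar E} = \overline{L_E}$, so that ``$E$ concurs with its conjugate'' unwinds to ``$L_E \circledast \overline{L_E}$ is Dirac.''

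First I would compute the cotangent product pointwise. By the defining formula of $\circledast$ — which, dually to $\star$, shares the covector and adds the tangent vectors — one has at each point
\begin{align*}
 L_E \circledast L_{\bar E} = \{(u + v) + \xi : u + \xi \in L_E, \ v + \xi \in L_{\bar E}\}.
\end{align*}
The constraints $u + \xi \in L_E$ and $v + \xi \in L_{\bar E}$ read $u \in E$, $v \in \bar E$, and $\xi \in \mathrm{Ann}(E) \cap \mathrm{Ann}(\bar E) = \mathrm{Ann}(E + \bar E)$, so that $L_E \circledast L_{\bar E} = (E + \bar E) \oplus \mathrm{Ann}(E + \bar E)$ is itself the foliation-type Lagrangian family attached to the distribution $E + \bar E$. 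Invoking once more the characterization of the previous paragraph, now applied to $E + \bar E$, the cotangent product is smooth and Dirac if and only if $E + \bar E$ is a smooth involutive subbundle of $TM \otimes \C$.

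It then remains to match this with Nirenberg's theorem, whose hypotheses are precisely that $E$ be involutive and that $E + \bar E$ be a constant-rank involutive subbundle; under these the theorem delivers the local normal form, and conversely these are the conditions under which the conclusion can hold. By the computation above these are verbatim the conditions that $L_E$ be Dirac and that $L_E$ concur with $\overline{L_E}$, which — $E$ being an involutive structure by assumption — is just concurrence of $E$ with its conjugate. The one point demanding care, and the step I expect to be the main obstacle, is the bookkeeping of constant rank: since $E$ having locally constant rank does \emph{not} force $E + \bar E$ to, the nontrivial content hidden in ``$L_E \circledast \overline{L_E}$ is Dirac'' is exactly the smoothness (constant rank) of $E + \bar E$, which must be identified with the standing regularity hypothesis of Nirenberg's theorem before its involutivity clause can be invoked.
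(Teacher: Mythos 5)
Your computation is correct and your translation of concurrence is exactly the right one: $\mathrm{Gr}(E)\circledast\mathrm{Gr}(\overline{E})=(E+\overline{E})\oplus(E+\overline{E})^{\circ}$ is the complexified form of Example \ref{example : foliations 2}, so concurrence of $E$ with its conjugate is precisely the condition that $E+\overline{E}$ be a smooth (locally constant rank) involutive subbundle, and your closing remark that the smoothness of the cotangent product \emph{is} the constant-rank hypothesis of Nirenberg's theorem is the correct bookkeeping. Where you diverge from the paper is in what is done with this identification. You treat the Frobenius--Nirenberg theorem as a black box: your argument checks that its hypotheses coincide with concurrence and that the normal form visibly implies them back, which is a legitimate and economical reading of the statement. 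The paper instead \emph{derives} the adapted atlas from concurrence, effectively re-proving the complex Frobenius theorem in Dirac language: since $E\cap\overline{E}$ is conjugation-invariant, smooth and involutive, it is the complexification of a (locally simple) real foliation; pushing $E$ forward along a local quotient $p:M\to N$ gives a CR structure $E'$ on $N$ (Lemma \ref{lem : inv pushes to CR}); concurrence descends to $E'$, where it says that $E'\oplus\overline{E'}$ is the complexification of the tangent bundle to a foliation with complex leaves, whose adapted charts come from the Newlander--Nirenberg theorem; pulling these back and adjoining leaf coordinates of $E\cap\overline{E}$ produces the Nirenberg atlas of type $(n,d)$. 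So the paper's route buys a proof of the normal form from Newlander--Nirenberg via the two Dirac products (the $\star$-product governing the inner foliation $E\cap\overline{E}$, the $\circledast$-product the outer one $E+\overline{E}$), while yours buys brevity at the cost of citing the theorem whose domain of validity is being characterized; if your intent were to re-derive the normal form rather than delimit it, the quotient by $E\cap\overline{E}$ is the step missing from your write-up.
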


\begin{theoremf}
 The generalized complex structures which concur with their conjugates are exactly those which simultaneously induce $PN$- and $\Omega N$-structures in the sense of Magri-Morosi \cite{Magri_Morosi}.
\end{theoremf}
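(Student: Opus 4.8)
The plan is to pass from the Dirac-structure picture of a generalized complex structure to its operator picture. Recall that $L\subset\TT M\otimes\C$ is the $+i$-eigenbundle of an orthogonal complex structure $\mathcal{J}\colon\TT M\to\TT M$, $\mathcal{J}^2=-\id$, with $\bar L$ its $-i$-eigenbundle. Writing
\[
\mathcal{J}=\begin{pmatrix} A & \pi \\ \sigma & -A^{*}\end{pmatrix},
\]
with $A\in\mathrm{End}(TM)$ and $\pi\colon T^{*}M\to TM$, $\sigma\colon TM\to T^{*}M$ the bivector and two-form parts, the identity $\mathcal{J}^2=-\id$ yields at once the algebraic relations $A^{2}+\pi\sigma=-\id$, $A\pi=\pi A^{*}$ and $\sigma A=A^{*}\sigma$. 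I would emphasise immediately that the last two relations are \emph{exactly} the algebraic compatibilities demanded of a $PN$-pair $(\pi,A)$ and of an $\Omega N$-pair $(\sigma,A)$; moreover integrability of $L$ already forces $\pi$ to be Poisson. Thus the algebraic and the ``$\pi$-Poisson'' content of the target statement come for free, and the whole burden of the theorem is to show that the \emph{differential} content of concurrence matches the remaining Magri--Morosi conditions.

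First I would compute the cotangent product $L\circledast\bar L$ explicitly. Since conjugation interchanges the two factors, $L\circledast\bar L$ is conjugation-invariant, hence the complexification of a genuine real Lagrangian family, and the point is to express that family through $A,\pi,\sigma$. Using the linear-algebraic description of $\circledast$ --- sections $u+\xi\in L$ and $v+\xi\in\bar L$ sharing the covector $\xi$ produce $(u+v)+\xi$ --- together with the eigen-equations $(A\mp i)\,u=-\pi\xi$ and $\sigma u=(A^{*}\pm i)\,\xi$, one finds that the $TM$-component $u+v$ is governed by $A$ acting as a recursion operator through $A\pi=\pi A^{*}$ and $\sigma A=A^{*}\sigma$. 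I expect the outcome to be that the tangent distribution and the induced two-form of $L\circledast\bar L$ are built from $A$ as a Nijenhuis candidate, with $\pi$ occupying a ``$P$'' slot and $\sigma$ an ``$\Omega$'' slot.

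Next I would translate concurrence --- that is, involutivity of $L\circledast\bar L$ under the Dorfman bracket --- into tensorial equations on $A,\pi,\sigma$. Feeding sections of the form produced above into $[\cdot,\cdot]$ and collecting the $TM$- and $T^{*}M$-parts, the involutivity should split into precisely three requirements: that $A$ have vanishing Nijenhuis torsion; that $(\pi,A)$ satisfy the vanishing of the Magri--Morosi concomitant (so that $\pi$ and $A\pi$ form a Poisson pencil), i.e.\ be a $PN$-structure; and that $(\sigma,A)$ satisfy closedness of $\sigma$ and of $\sigma A$, i.e.\ be an $\Omega N$-structure. The main obstacle --- the technical heart of the argument --- is recognising the raw output of the Dorfman-bracket computation as exactly the Nijenhuis-torsion and Magri--Morosi-concomitant expressions; here the algebraic identities coming from $\mathcal{J}^2=-\id$ and from the integrability of $L$ must be used to collapse a long bracket calculation into recognisable form.

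Finally, the argument is reversible: the three conditions just extracted are precisely the hypotheses that $(\pi,A)$ is a $PN$-structure and $(\sigma,A)$ an $\Omega N$-structure sharing the Nijenhuis tensor $A$, and running the bracket computation backwards recovers involutivity of $L\circledast\bar L$. This gives the stated equivalence. I would also verify the degenerate directions --- where $A\pm i$ fails to be invertible, equivalently where $\pi\sigma=-(A^{2}+\id)$ degenerates --- by arguing directly with the algebraic relations rather than with inverses, so that the identification of $L\circledast\bar L$ and hence the whole equivalence holds pointwise on all of $M$.
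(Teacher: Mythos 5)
Your setup is sound and you correctly isolate where the content lies: the algebraic compatibilities $a\pi^{\sharp}=\pi^{\sharp}a^{*}$, $a^{*}\omega^{\sharp}=\omega^{\sharp}a$ and the Poisson condition on $\pi$ are indeed already forced by $J^{2}=-\id$ and integrability of $L$. But the proposal has a genuine gap: the entire differential-geometric content of the theorem is deferred to a computation you describe only in the subjunctive (``I expect the outcome to be\dots'', ``the involutivity should split into precisely three requirements''), and the computation as you have set it up is the hard version of the problem. The Lagrangian family $L\circledast\overline{L}$ is \emph{not} the graph of a two-form --- its intersection with $T_{\C}M$ is $(L\cap T_{\C}M)+(\overline{L}\cap T_{\C}M)$, which is generically nonzero --- so feeding its sections into the Dorfman bracket and trying to recognize Nijenhuis torsions and concomitants in the output is a long, unstructured calculation that you have not actually performed. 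Moreover, your expectation that involutivity of the cotangent product splits into three independent requirements is not how the equivalence works: concurrence adds exactly \emph{one} scalar condition beyond the integrability of $L$, and the remaining Magri--Morosi conditions are then extracted from the integrability conditions one already has (in the paper, conditions C1)--C4) of Lemma \ref{lem : GCS}, following Crainic), not from the bracket computation on the product.

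The paper's route avoids all of this. By transversality $L\oplus\overline{L}=\TT M\otimes\C$ and Lemma \ref{lem : equivalent conditions for transversality revisited}, the products $L\star\mathcal{R}_{-1}(\overline{L})$ and $L\circledast\mathcal{R}_{-1}(\overline{L})$ are graphs of a bivector and a two-form respectively, and a short pointwise computation with $\sigma_{\pm}=J\pm i$ identifies them as $\mathrm{Gr}(\tfrac{1}{2i}\pi)$ and $\mathrm{Gr}(\tfrac{1}{2i}\omega)$. The rescaling invariance of involutivity (Proposition \ref{pro : concurrence pairwise and scalars}, part b) then gives $L\smile\overline{L}$ if and only if $L\smile\mathcal{R}_{-1}(\overline{L})$, and the latter is simply the condition that $\mathrm{Gr}(\tfrac{1}{2i}\omega)$ be Dirac, i.e.\ $\mathrm{d}\omega=0$. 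Finally, $\mathrm{d}\omega=0$ fed into C3) forces $\mathrm{N}(a)=0$, and together with C1), C2), C4) this is precisely the statement that $(\pi,a)$ is a $PN$-structure and $(\omega,a)$ an $\Omega N$-structure; the converse direction is immediate since an $\Omega N$-structure requires $\omega$ closed. To repair your argument you would either need to carry out the direct bracket computation on $L\circledast\overline{L}$ in full (including the use of the $L$-integrability identities to collapse it), or, better, insert the rescaling step, which reduces the ``technical heart'' you flag to the triviality that the graph of a two-form is Dirac exactly when the form is closed. Your final worry about invertibility of $A\pm i$ is a red herring: the relevant operators are $\sigma_{\pm}=J\pm i$ on $\TT M\otimes\C$, whose restrictions to $\TT M$ are isomorphisms onto $L$ and $\overline{L}$ precisely because $L\cap\overline{L}=0$, with no case analysis needed.
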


\subsection*{Acknowledgements}
We would like to express our gratitude to Ioan M\u{a}rcu\c{t}, whose early involvement in this project was invaluable, and who contributed to the development of the paper through many insightful discussions. We would also like to thank Dan Ag\"uero and Igor Mencattini for useful feedback on an earlier version of the paper, and the anonymous referee, whose suggestions and corrections greatly improved the manuscript.

\section{Lagrangian families and Dirac structures}\label{sec : Lagrangian stuff}

In this preparatory section, we collect basic facts about the generalized tangent bundle and Dirac structures which will be used throughout the paper. For a more thorough introduction, we recommend \cite{Bursztyn}, \cite{Gualtieri}, \cite{Marcut} and \cite{Meinrenken}.\\

The {\bf generalized tangent bundle} $\mathbb{T} M:=TM \oplus T^*M$ of a smooth manifold $M$ is equipped with the canonical, symmetric bilinear pairing
 \begin{align}\label{eq : symmetric bilinear pairing}
\langle\cdot,\cdot\rangle:\mathbb{T} M \times \mathbb{T} M \longrightarrow \mathbb{R}, \quad \langle u+\xi,v+\eta\rangle:=\iota_v\xi+\iota_u\eta,
 \end{align}and the {\bf Dorfman bracket} on its space of sections $\Gamma(\mathbb{T}M)$, given by
 \begin{align}\label{eq : dorfman bracket}
[u+\xi,v+\eta]:=[u,v]+\mathscr{L}_u\eta - \iota_v\mathrm{d}\xi;
 \end{align}
 here $[u,v]$ denotes the Lie bracket of vector fields $u,v \in \mathfrak{X}(M)$, $\mathrm{d}:\Omega(M) \to \Omega(M)$ the de Rham differential, and $\mathscr{L}_u$ the Lie derivative by the vector field $u$. The Dorfman bracket is a {\bf Leibniz bracket}, in the sense that
\begin{align*}
 & [a,[b,c]] = [[a,b],c]+[b,[a,c]], & a,b,c \in \Gamma(\mathbb{T}M),
\end{align*}but it is not a Lie bracket since it is not antisymmetric:
\begin{align*}
 & [a,b]+[b,a]=\mathrm{d}\langle a,b\rangle, & a,b \in \Gamma(\mathbb{T}M).
\end{align*}

A {\bf linear family} $L \subset \mathbb{T}M$ is a subset which meets each fibre $\mathbb{T}_xM$ in a vector space. We stress that we assume no continuity for $x \mapsto L \cap \mathbb{T}_xM$; for example,
\begin{align*}
 & L_x = \begin{cases}
          T_xM & \text{for} \ x \neq 0;\\
          T^*_xM & \text{for} \ x =0
         \end{cases}, 
 & R_x = \begin{cases}
          0 & \text{for} \ x \neq 0;\\
          T_xM & \text{for} \ x =0
         \end{cases}
\end{align*}are perfectly well-defined linear families on $M=\mathbb{R}$. The {\bf orthogonal} $L^{\perp}$ of a linear family $L$ is the linear family
\begin{align*}
 L^{\perp} = \{ a \in \mathbb{T}M \ | \ \langle a, \cdot\rangle|_L = 0\}.
\end{align*}
Note that, since \eqref{eq : symmetric bilinear pairing} is nondegenerate, $2\dim(M)=\mathrm{rk}(L)+\mathrm{rk}(L^{\perp})$. A {\bf Lagrangian} family is a linear family $L$ which equals its orthogonal, $L=L^{\perp}$. Observe that, in this case, $\dim(M)=\mathrm{rk}(L)$. We collect all Lagrangian families on $M$ into a set $\mathrm{Lag}(\mathbb{T}M)$. Given a smooth map $f:M \to N$, we say that
\begin{align*}
 && a \in \mathbb{T}M && \text{and} && b \in \mathbb{T}N
\end{align*}are {\bf $f$-related} if
\begin{align*}
 & f_*\mathrm{pr}_T(a)=\mathrm{pr}_T(b), & \mathrm{pr}_{T^*}(a) = f^*\mathrm{pr}_{T^*}(b),
\end{align*}in which case we write $a \sim_f b$. This induces pointwise maps of {\bf pushforward} and {\bf pullback},
\begin{align*}
 & f_! : \mathrm{Lag}(\mathbb{T}_xM) \to \mathrm{Lag}(\mathbb{T}_{f(x)}N), & f^! : \mathrm{Lag}(\mathbb{T}_{f(x)}N) \to \mathrm{Lag}(\mathbb{T}_xM),
\end{align*}
given by
\begin{align*}
 && f_!(L) = \{ b \in \mathbb{T}_{f(x)}N \ | \ \exists \ a \in L, \ a \sim_f b\}, && f^!(R) = \{ a \in \mathbb{T}_{x}M \ | \ \exists \ b \in R, \ a \sim_f b\}.
\end{align*}
If $L \in \mathrm{Lag}(\mathbb{T}M)$ and $R \in \mathrm{Lag}(\mathbb{T}N)$, we say that
\begin{align*}
 f : (M,L) \to (N,R)
\end{align*}is {\bf forward} if $f_!(L)=R$, and {\bf backward} if $L=f^!(R)$.

A linear family $L$ is {\bf smooth} if it is a vector subbundle of $\mathbb{T}M$, and we refer to a smooth Lagrangian family $L$ as a {\bf Lagrangian subbundle}. For a Lagrangian subbundle $L$, the parity of $\mathrm{rank}\mathrm{pr}_{TM}(L_x)$ is locally independent of $x \in M$; in particular, on a connected manifold $M$, Lagrangian subbundles are either {\bf even} or {\bf odd} (see \cite[Section 1.4]{ABM} or \cite[Section 1.1]{Gualtieri}). A Lagrangian subbundle $L$ is a {\bf Dirac structure} if $\Gamma(L)$ is involutive under the Dorfman bracket \eqref{eq : dorfman bracket}; this condition is equivalent to requiring that the {\bf Courant tensor}
\begin{align*}
 & \Upsilon_L \in \Gamma(\wedge^3L^*), & \Upsilon_L(a,b,c):=\langle [a,b],c\rangle.
\end{align*}
vanish identically. 

If $L$ is a Dirac structure on $M$, and $f:N \to M$ is a smooth map, the {\bf pullback Lagrangian family} $f^!(L)$ is a Lagrangian family on $N$. In contrast, if $f:M \to N$ is a smooth map, the pointwise pushforward $x \mapsto f_!(L_x)$ is in principle only a Lagrangian family in the pullback $f^*(\mathbb{T}N)$; it can be regarded as a {\bf pushforward Lagrangian family} on $N$ exactly when $L$ is $f$-invariant:
\begin{align*}
 && f(x)=f(y) && \Longrightarrow && f_!(L_x) = f_!(L_y),
\end{align*}
It should be stressed that, in general, neither pullbacks nor pushforwards of Dirac structures need be smooth. However:
\begin{lemma}\label{lem : pullbacks or pushforwards of Dirac}
 If the pullback or the pushforward of a Dirac structure is smooth, then it is itself a Dirac structure.
\end{lemma}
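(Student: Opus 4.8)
The plan is to reduce everything to the vanishing of the Courant tensor on a dense set. Since pointwise pullbacks and pushforwards of Lagrangian families are again Lagrangian (pure linear algebra, as recorded above), smoothness of $f^!(L)$ (resp.\ $f_!(L)$) already makes it a Lagrangian subbundle, and what remains is involutivity, i.e.\ the vanishing of its Courant tensor $\Upsilon$. Crucially, on a Lagrangian subbundle $\Upsilon$ is genuinely tensorial — it is, by definition, a smooth section of $\wedge^3(\cdot)^*$ — so $\Upsilon_{f^!(L)}$ (resp.\ $\Upsilon_{f_!(L)}$) is a continuous section of a vector bundle, and it suffices to show it vanishes on a dense subset of the base.

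The engine is the naturality of the Dorfman bracket under $f$-relatedness: a direct computation (using $f^*\mathrm{d} = \mathrm{d}f^*$ and the fact that $f$-related vector fields have $f$-related Lie brackets) shows that if $a_i \sim_f b_i$ as sections for $i=1,2$, then $[a_1,a_2]\sim_f[b_1,b_2]$, while the pairing transforms as $\langle a_1,a_2\rangle = f^*\langle b_1,b_2\rangle$. Consequently, whenever local sections of the pullback (resp.\ pushforward) are $f$-related to sections of $L$, their brackets are $f$-related to brackets of sections of $L$; since $L$ is Dirac the latter remain sections of $L$, so the former are sections of $f^!(L)$ (resp.\ $f_!(L)$), and $\Upsilon$ vanishes on such sections. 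I would therefore be done as soon as $f^!(L)$ admits, near a dense set of points, a spanning set of sections each of which is $f$-related to a section of $L$ (or lies in $\ker f_*$, which is $f$-related to $0$).

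Producing such $f$-related frames is the main obstacle, and is exactly where the smoothness hypothesis is spent: the covector parts never cause trouble, being honest pullbacks $f^*\beta$, but the vector parts cannot be made $f$-related for an arbitrary smooth map. I would sidestep this by working on the open dense locus $U$ where $\mathrm{d}f$ has locally constant rank (the rank is lower semicontinuous, so its locally-constant locus is open and dense). On $U$ the constant-rank theorem presents $f$ locally as a linear projection; then $\ker f_*$ is a smooth subbundle whose sections are $f$-related to $0$, and every section of $L$ lifts, in the product model, to an $f$-related section of the pullback. Counting dimensions — the Lagrangian family $f^!(L)$ has rank equal to the dimension of the base, and these lifts together with $\ker f_*$ already span it pointwise — one obtains a smooth spanning set of the desired type over $U$. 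By the naturality step $\Upsilon_{f^!(L)}$ vanishes on $U$, and by continuity it vanishes identically, so the pullback is Dirac. The pushforward case is strictly analogous — again restrict to the constant-rank locus, where now both $\ker \mathrm{d}f$ and $\operatorname{im}\mathrm{d}f$ are smooth and the $f$-invariance of $L$ lets one descend $f$-related frames — or it can be deduced from the pullback case through the graph of $f$.
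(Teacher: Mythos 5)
The paper itself offers no argument for this lemma --- its ``proof'' is a citation to \cite{Bursztyn}, Propositions 5.6 and 5.9 --- so your proposal is really to be measured against the argument in that reference, whose strategy you reproduce correctly in outline: smoothness already makes $f^{!}(L)$ (resp.\ $f_{!}(L)$) a Lagrangian subbundle; the Courant tensor of an isotropic subbundle is $C^{\infty}$-trilinear, hence a continuous tensor which vanishes as soon as it vanishes on a dense set and, there, on a pointwise spanning family of sections; and the naturality of the Dorfman bracket and of the pairing under $f$-relatedness (which you state correctly, and which is the right engine) kills the tensor on any frame consisting of sections $f$-related to sections of $L$ or lying in $\ker f_{*}$. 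All of these ingredients are sound.

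The gap is in the production of such frames. On the locus where $\mathrm{d}f$ has locally constant rank, $f$ factors locally as a surjective submersion onto an embedded slice $S \subset M$ followed by the inclusion $\iota : S \to M$, and it is \emph{not} true that ``every section of $L$ lifts, in the product model, to an $f$-related section of the pullback'': since $\mathrm{im}(f_{*}) = TS$, a section $u+\xi$ of $L$ admits an $f$-related partner only when $u$ is tangent to $S$ along $S$. Pointwise, $f^{!}(L)_x$ is indeed spanned by $\ker f_{*x}$ together with lifts of $L_{f(x)} \cap (T_{f(x)}S \oplus T^{*}_{f(x)}M)$; but to promote this to a local frame of \emph{smooth} $f$-related sections you need $L \cap (TS \oplus T^{*}M)|_{S}$ --- equivalently $L \cap N^{*}S$, the two ranks differing by $\dim S$ --- to be a subbundle near the point, and neither the constant rank of $\mathrm{d}f$ nor the assumed smoothness of $f^{!}(L)$ guarantees this. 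Your dimension count therefore holds fibre by fibre but does not produce sections. The repair is one more application of your own device: shrink further to the still open and dense locus where $L \cap N^{*}S$ has locally constant rank along the local slices; there sections of the subbundle $L \cap (TS \oplus T^{*}M)$ of $L|_{S}$ extend to local sections of $L$, lift through the submersion, and together with $\Gamma(\ker f_{*})$ span $f^{!}(L)$, after which your naturality-plus-density argument closes the proof. The analogous extra constant-rank restriction, now on $L \cap \left(TM \oplus (\ker f_{*})^{\circ}\right)$, is needed in the pushforward case (where one should also take $f$ to be a surjective submersion, as the cited reference does, for $f_{!}(L)$ to be a Lagrangian family on $N$ at all); the aside that the pushforward case ``can be deduced from the pullback case through the graph of $f$'' is not substantiated.
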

\begin{proof}
 The pullback statement is proved in \cite[Proposition 5.6]{Bursztyn} and the pushforward statement is proved in \cite[Proposition 5.9]{Bursztyn}.
\end{proof}

If $L \subset \mathbb{T}M$ is a Dirac structure, the restriction of the Dorfman bracket to $\Gamma(L)$ turns it into a Lie algebra, equipping $L$ with the structure of a {\bf Lie algebroid}, with anchor $\mathrm{pr}_T:L \to TM$. There is then an induced partition of $M$ into initial submanifolds, the {\bf leaves} of $L$, which can be described as follows: the leaf $\mathrm{S}_L(x)$ of $L$ through $x \in M$ consists of all endpoints $c(1)$ of smooth curves $c:[0,1] \to M$ which start at $x$, and can be covered by an {\bf $L$-path} $a:[0,1] \to L$ --- that is, a smooth path such that
\begin{align*}
 & c = p(a), & \dot{c}=\mathrm{pr}_T(a),
\end{align*}where $p:L \to M$ denotes the canonical projection. The inclusion $i:\mathrm{S}_L(x) \to M$ gives a backward map
\begin{align*}
 i : (\mathrm{S}_L(x),\omega_{\mathrm{S}_L(x)}) \to (M,L),
\end{align*}where $\omega_{\mathrm{S}_L(x)} \in \Omega^2(\mathrm{S}_L(x))$ is the closed two-form given by
\begin{align*}
 && \omega_{\mathrm{S}_L(x)}(u,v):=\langle a,v\rangle, && a \in L, && u=\mathrm{pr}_T(a).
\end{align*}

\begin{remark}\normalfont
The abelian groups of two-forms $\Omega^2(M)$, non-zero real numbers $\mathbb{R}^{\times}$ and bivectors $\mathfrak{X}^2(M)$ act on $\mathbb{T}M$ via
\begin{enumerate}
 \item $\Omega^2(M) \curvearrowright \mathbb{T}M$, $\mathcal{R}_{\omega}(a):=a+\omega^{\sharp}(\mathrm{pr}_T(a))$;
 \item $\mathbb{R}^{\times} \curvearrowright \mathbb{T}M$, $\mathcal{R}_{t}(a):=t\mathrm{pr}_T(a)+\mathrm{pr}_{T^*}(a)$;
 \item $\mathfrak{X}^2(M) \curvearrowright \mathbb{T}M$, $\mathcal{R}_{\pi}(a):=a+\pi^{\sharp}(\mathrm{pr}_{T^*}(a))$,
\end{enumerate}and induce actions of these groups on the set of Lagrangian families on $M$. The actions 1-3 all restrict to actions on the set of Lagrangian subbundles. The action 2 preserves the set of Dirac structures, and 1 preserves the set of Dirac structures when the two-form is \emph{closed}.
\end{remark}

\section{Tangent products}\label{sec : Tangent product}

The action of two-forms on Lagrangian families by gauge transformation,
\begin{align*}
 & \mathcal{R} : \Omega^2(M) \times \mathrm{Lag}(\mathbb{T}M) \to \mathrm{Lag}(\mathbb{T}M), & (\omega,L) \mapsto \mathcal{R}_{\omega}(L)
\end{align*}
has a canonical extension to a function
\begin{align*}
 & \star : \mathrm{Lag}(\mathbb{T}M) \times \mathrm{Lag}(\mathbb{T}M) \to \mathrm{Lag}(\mathbb{T}M), & (L,R) \mapsto L \star R
\end{align*}
which is known under the name of \emph{tensor product} of Dirac structures (see e.g. \cite{ABM} and \cite{Gualtieri}). We will refer to it instead as the \emph{tangent product}, to highlight its duality with respect to the cotangent product to be introduced later. We begin by invoking \cite[Definition 2.20]{Gualtieri} in the context of Lagrangian families:

\begin{definition}
 Let $\varDelta : M \to M \times M$ denote the diagonal map. We refer to
 \begin{align*}
  & \star : \mathrm{Lag}(\mathbb{T}M) \times \mathrm{Lag}(\mathbb{T}M) \to \mathrm{Lag}(\mathbb{T}M), & L \star R:=\varDelta^!(L \times R)
 \end{align*}as the {\bf tangent product} of the Lagrangian families $L$ and $R$.
\end{definition}

It will be useful later to have a purely algebraic description of tangent products:
\begin{lemma}
 The tangent product of two Lagrangian families $L$ and $R$ on $M$ is given by
 \begin{align*}
  L \star R = \{ a+\mathrm{pr}_{T^*}(b) = \mathrm{pr}_{T^*}(a)+b \ | \ (a,b) \in L \times R, \ \mathrm{pr}_T(a-b)=0\}.
 \end{align*}
\end{lemma}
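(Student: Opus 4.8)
The plan is to unwind the definition $L \star R = \varDelta^!(L \times R)$ directly, translating the $\varDelta$-relation into pointwise linear conditions and reading off the formula. First I would fix $x \in M$ and recall the identifications $T_{(x,x)}(M \times M) \cong T_x M \oplus T_x M$ and $T^*_{(x,x)}(M \times M) \cong T^*_x M \oplus T^*_x M$, under which the tangent map of the diagonal is $\varDelta_* v = (v,v)$ and its transpose is the sum map $\varDelta^*(\xi_1,\xi_2) = \xi_1 + \xi_2$. An element of $(L \times R)_{(x,x)}$ is then $c = (u_1,u_2) + (\xi_1,\xi_2)$ for some $a := u_1 + \xi_1 \in L_x$ and $b := u_2 + \xi_2 \in R_x$, so that $\mathrm{pr}_T(a) = u_1$, $\mathrm{pr}_{T^*}(a) = \xi_1$, and similarly for $b$.

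Next I would spell out what it means for $w \in \mathbb{T}_x M$ to satisfy $w \sim_{\varDelta} c$. The tangent condition $\varDelta_*\mathrm{pr}_T(w) = \mathrm{pr}_T(c)$ reads $(\mathrm{pr}_T(w), \mathrm{pr}_T(w)) = (u_1, u_2)$, forcing $u_1 = u_2 = \mathrm{pr}_T(w)$; equivalently $\mathrm{pr}_T(a) = \mathrm{pr}_T(b)$, that is $\mathrm{pr}_T(a - b) = 0$. The cotangent condition $\mathrm{pr}_{T^*}(w) = \varDelta^*\mathrm{pr}_{T^*}(c)$ reads $\mathrm{pr}_{T^*}(w) = \xi_1 + \xi_2 = \mathrm{pr}_{T^*}(a) + \mathrm{pr}_{T^*}(b)$.

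Combining the two conditions gives $w = \mathrm{pr}_T(a) + \mathrm{pr}_{T^*}(a) + \mathrm{pr}_{T^*}(b) = a + \mathrm{pr}_{T^*}(b)$, and since $\mathrm{pr}_T(a) = \mathrm{pr}_T(b)$ we may equally write $w = \mathrm{pr}_T(b) + \mathrm{pr}_{T^*}(a) + \mathrm{pr}_{T^*}(b) = \mathrm{pr}_{T^*}(a) + b$. This is exactly the asserted description, so running the computation in reverse --- any $w$ of the stated form is $\varDelta$-related to the corresponding $c \in L \times R$ --- establishes both inclusions and completes the identification.

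This argument is essentially pure bookkeeping, and I do not anticipate a genuine obstacle. The only points requiring care are the correct identification of $\varDelta^*$ as the sum map $\varDelta^*(\xi_1,\xi_2) = \xi_1 + \xi_2$ (dual to $v \mapsto (v,v)$), and the verification that the two expressions $a + \mathrm{pr}_{T^*}(b)$ and $\mathrm{pr}_{T^*}(a) + b$ genuinely coincide, which is precisely where the constraint $\mathrm{pr}_T(a - b) = 0$ is used. Since $L \times R$ is automatically a Lagrangian family and pullback preserves Lagrangian families, no separate check is needed that the output lands in $\mathrm{Lag}(\mathbb{T}M)$.
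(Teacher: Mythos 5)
Your proposal is correct and follows exactly the paper's own argument: unwind $\varDelta^!(L\times R)$ using $\varDelta_*(u)=(u,u)$ and $\varDelta^*(\xi,\eta)=\xi+\eta$, so that $w\sim_\varDelta(a,b)$ forces $\mathrm{pr}_T(a)=\mathrm{pr}_T(b)=\mathrm{pr}_T(w)$ and $\mathrm{pr}_{T^*}(w)=\mathrm{pr}_{T^*}(a)+\mathrm{pr}_{T^*}(b)$, yielding the stated formula. The only difference is that you make explicit the (trivial but worthwhile) check that $a+\mathrm{pr}_{T^*}(b)=\mathrm{pr}_{T^*}(a)+b$ under the constraint, which the paper leaves implicit.
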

\begin{proof}
 Since $\varDelta_*(u) = (u,u)$ and $\varDelta^*(\xi,\eta) = \xi + \eta$, we see that, for
 \begin{align*}
  && u,u_L,u_R \in TM, && \xi,\xi_L,\xi_R \in T^*M,
 \end{align*}
 we have that
 \begin{align*}
  \mathbb{T} M \ni u+\xi  \sim_{\varDelta} (u_L+\xi_L,u_R+\xi_R) \in L \times R
 \end{align*}exactly when
 \begin{align*}
  && u_L=u=u_R && \text{and} && \xi = \xi_L+\xi_R,
 \end{align*}which is to say that $L \star R=\varDelta^!(L \times R)$ is given by the formula in the statement.
\end{proof}

\begin{lemma}\label{lem : tangent product}
 The map $\star$ defines an associative and commutative product on Lagrangian families, for which $TM$ acts as unity and $T^*M$ as zero:
  \begin{multicols}{2}
 \begin{enumerate}[a)]
 \setcounter{enumi}{\value{conditions1}}
 \item $L \star (R \star S) = (L \star R) \star S$;
 \item $L \star R=R \star L$;
 \setcounter{conditions1}{\value{enumi}}
\end{enumerate}
\begin{enumerate}[a)]
 \setcounter{enumi}{\value{conditions1}}
 \item $L \star TM = L$;
 \item $L \star T^*M=T^*M$.
 \setcounter{conditions1}{\value{enumi}}
\end{enumerate}
 \end{multicols}
\end{lemma}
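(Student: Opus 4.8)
The plan is to reduce all four assertions to the explicit pointwise description furnished by the preceding Lemma. Unwinding that formula, an element of $L \star R$ over a point $x$ comes from a pair $a = u + \xi_L \in L$ and $b = u + \xi_R \in R$ sharing the same tangent part $u = \mathrm{pr}_T(a) = \mathrm{pr}_T(b)$, and equals $u + \xi_L + \xi_R$. Thus I would first record the symmetric working description
\[
 L \star R = \{\, u + \xi_L + \xi_R \ \mid \ u + \xi_L \in L, \ u + \xi_R \in R \,\},
\]
from which most of the statement reads off directly. Note that the right-hand sides below are automatically Lagrangian families, since $\star$ is the diagonal pullback of a product; so only the displayed equalities of sets need checking.

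Commutativity b) is then immediate, as the working description is visibly invariant under interchanging the roles of $(L,\xi_L)$ and $(R,\xi_R)$. For the unit law c), the family $TM$ consists precisely of the vectors $u + 0$, so the constraint $u + \xi_R \in TM$ imposes no condition on $u$ beyond forcing $\xi_R = 0$; hence $L \star TM = \{\, u + \xi_L \mid u + \xi_L \in L \,\} = L$. Dually, for d) the family $T^*M$ consists of the covectors $0 + \xi$, so $u + \xi_R \in T^*M$ forces $u = 0$ while leaving $\xi_R$ free to range over all of $T^*_xM$; the resulting elements $0 + \xi_L + \xi_R$ then sweep out all of $T^*M$ regardless of $\xi_L$ (one may already take $\xi_L = 0$, since $0 \in L$), giving $L \star T^*M = T^*M$.

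The one step deserving genuine care is associativity a). Here I would introduce the ternary expression
\[
 L \star R \star S := \{\, u + \xi_L + \xi_R + \xi_S \ \mid \ u + \xi_L \in L, \ u + \xi_R \in R, \ u + \xi_S \in S \,\}
\]
and verify that both $L \star (R \star S)$ and $(L \star R) \star S$ coincide with it. Expanding $L \star (R \star S)$ via the binary description amounts to choosing $u + \xi_L \in L$ together with an element $u + \zeta \in R \star S$; but $u + \zeta \in R \star S$ says exactly that $\zeta = \xi_R + \xi_S$ with $u + \xi_R \in R$ and $u + \xi_S \in S$ sharing the tangent part $u$, which reproduces the ternary set. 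The computation for $(L \star R) \star S$ is identical after relabeling. The only subtlety to watch is that the common tangent vector $u$ is genuinely shared across all three families — this is precisely what makes the two iterated products agree — but it is forced at each stage by the tangent-matching condition $\mathrm{pr}_T(a - b) = 0$, so no difficulty arises.

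Alternatively, associativity admits a conceptual proof via functoriality: since $\star$ is the diagonal pullback of a product and pullback of Lagrangian families is contravariantly functorial, both iterated products should equal the pullback of $L \times R \times S$ along the triple diagonal $M \to M \times M \times M$, which factors through each of the two partial diagonals. I expect the elementary ternary computation to be the cleaner route to write down, with functoriality serving as a conceptual sanity check.
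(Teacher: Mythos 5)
The paper states this lemma without proof, treating it as a routine consequence of the algebraic description of $\star$ established in the preceding lemma; your verification is exactly that intended routine argument, and it is correct in all four parts, including the careful handling of the shared tangent component in the associativity step. Nothing further is needed.
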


\begin{example}\label{example : two-forms}\normalfont
 The tangent product of the graph $\mathrm{Gr}(\omega)$ of a two-form $\omega$ with any Lagrangian subbundle $L$ is smooth, and coincides with the gauge transformation of $L$ by $\omega$:
 \[\mathrm{Gr}(\omega) \star L = \mathcal{R}_{\omega}(L)= \{a+\iota_{\mathrm{pr}_{T}(a)}\omega\ | \ a \in L\}.\]In particular, $\mathrm{Gr}(\omega) \star \mathrm{Gr}(\omega')=\mathrm{Gr}(\omega+\omega')$.
\end{example}

\begin{lemma}\label{lem : behavior of star under pullbacks}
For all smooth maps $f : N \to M$ and Lagrangian families $L,R$ on $M$, we have that
\begin{align*}
 f^!(L \star R) = f^!(L) \star f^!(R).
\end{align*}
\end{lemma}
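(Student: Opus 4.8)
The plan is to reduce the statement to two purely formal ingredients: the functoriality of the pullback operation $(\cdot)^!$, and its compatibility with products. Write $\varDelta_N : N \to N \times N$ and $\varDelta_M : M \to M \times M$ for the two diagonals and $f \times f : N \times N \to M \times M$ for the product map. The starting observation is the commutative square
\begin{align*}
 \varDelta_M \circ f = (f \times f) \circ \varDelta_N,
\end{align*}
which is immediate on points, since both sides send $n \mapsto (f(n),f(n))$. Because $L \star R = \varDelta_M^!(L \times R)$ and, by definition, $f^!(L) \star f^!(R) = \varDelta_N^!\bigl(f^!(L) \times f^!(R)\bigr)$, the whole assertion becomes an identity between the two ways of pulling back $L \times R$ around this square.

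First I would record the functoriality of the pullback, namely $(g \circ h)^! = h^! \circ g^!$ for composable smooth maps $h : N \to M$ and $g : M \to P$. This follows by unwinding the definition of $f$-relatedness: one checks that $a \sim_{g \circ h} c$ holds if and only if there is an intermediate $b \in \mathbb{T}M$ with $a \sim_h b$ and $b \sim_g c$, the nontrivial implication being witnessed by $b := h_*\mathrm{pr}_T(a) + g^*\mathrm{pr}_{T^*}(c)$, whose defining relations are exactly the two halves of $a \sim_{g \circ h} c$. Applying this to the square above gives
\begin{align*}
 f^!(L \star R) = f^!\bigl(\varDelta_M^!(L \times R)\bigr) = (\varDelta_M \circ f)^!(L \times R) = \varDelta_N^!\bigl((f \times f)^!(L \times R)\bigr).
\end{align*}

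It then remains to identify $(f \times f)^!(L \times R) = f^!(L) \times f^!(R)$; substituting this into the last display and invoking the definition of the tangent product on $N$ yields $\varDelta_N^!\bigl(f^!(L) \times f^!(R)\bigr) = f^!(L) \star f^!(R)$, completing the argument. This remaining identity is where the bulk of the bookkeeping sits, though it too is routine: since $\mathbb{T}(N \times N)$ splits fibrewise as $\mathbb{T}N \oplus \mathbb{T}N$ and both the projections $\mathrm{pr}_T,\mathrm{pr}_{T^*}$ and the differential of $f \times f$ respect this splitting, the relation $(a_1,a_2) \sim_{f \times f} (b_1,b_2)$ decouples into $a_1 \sim_f b_1$ and $a_2 \sim_f b_2$; feeding this into the definition of $(f \times f)^!$ separates the existential quantifiers over the two factors and produces precisely $f^!(L) \times f^!(R)$. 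I expect no genuine obstacle here: because the entire argument takes place at the level of Lagrangian families, neither smoothness nor involutivity enters, and the only real content is the diagram chase, with the functoriality of $(\cdot)^!$ being the single ingredient not already recorded in the excerpt.
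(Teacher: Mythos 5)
Your proposal is correct and follows essentially the same route as the paper: the paper's proof is exactly the chain $f^!(L \star R) = f^!\varDelta_M^!(L \times R) = ((f,f)\circ\varDelta_N)^!(L \times R) = \varDelta_N^!(f^!(L)\times f^!(R)) = f^!(L)\star f^!(R)$, resting on the same commutative square, functoriality of $(\cdot)^!$, and compatibility of pullback with products. The only difference is that you spell out the two formal ingredients (with the explicit witness $b = h_*\mathrm{pr}_T(a)+g^*\mathrm{pr}_{T^*}(c)$ for functoriality) which the paper leaves implicit.
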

\begin{proof}
By definition of $\star$,
  \begin{align*}
   f^!(L \star R) & = f^!\varDelta_M^!(L \times R) = (\varDelta_M \circ f)^!(L \times R) = ((f,f) \circ \varDelta_N)^!(L \times R) = \varDelta_N^!(f^!(L) \times f^!(R)) \\ & =  f^!(L) \star f^!(R),
  \end{align*}where we denoted by $\varDelta_M$ and by $\varDelta_N$ the diagonal maps of $M$ and $N$.
\end{proof}

Because the tangent product of two Lagrangian families is given by a pullback construction, which need not preserve smoothness, it is to be expected that the tangent product of two Lagrangian subbundles need not be smooth, as the following example illustrates:

\begin{example}\normalfont
For all Lagrangian family $L$, we have that
\begin{align*}
 L \star \mathcal{R}_{-1}(L) = \mathrm{pr}_T(L) \oplus \mathrm{pr}_T(L)^{\circ}.
\end{align*}Therefore, for a Lagrangian subbundle $L$, $L \star \mathcal{R}_{-1}(L)$ is smooth exactly when $\mathrm{pr}_T : L \to TM$ has locally constant rank.
\end{example}

It is therefore useful to have a criterion at hand to ensure smoothness of tangent products:
\begin{criterion}\label{star criterion}
 For Lagrangian subbundles $L,R \subset \mathbb{T}M$, a sufficient condition for $L \star R$ to be smooth is that the vector bundle map
 \begin{align*}
  & \delta_T : L \oplus R \to TM, & \delta_T(a,b):=\mathrm{pr}_T(a-b)
 \end{align*}have locally constant rank. If that is the case, then
 \begin{align*}
  \Gamma(L \star R) = \{a+\mathrm{pr}_{T^*}(b) \ | \ a \in \Gamma(L), \ b \in \Gamma(R), \ \mathrm{pr}_T(a)=\mathrm{pr}_T(b)\}.
 \end{align*}
\end{criterion}
\begin{proof}
 Consider the linear map
 \begin{align*}
  & j : L \oplus R \to \mathbb{T}M, & j(a,b):=a+\mathrm{pr}_{T^*}(b).
 \end{align*}The Lagrangian family $L \star R$ is then
 \begin{align*}
  L \star R = j(\ker(\delta_T)).
 \end{align*}
 If $\delta_T$ has locally constant rank, the formula above expresses the Lagrangian family of constant rank $L \star R$ as the image of a smooth vector bundle map $j:\ker(\delta_T) \to \mathbb{T}M$, and this implies that $L \star R$ is a Lagrangian subbundle. Any section $c \in \Gamma(L \star R)$ lifts to a section $\widetilde{c} \in \Gamma(\ker(\delta_T))$, and setting
 \begin{align*}
  & a:=\mathrm{pr}_1(\widetilde{c}) \in \Gamma(L), & b:=\mathrm{pr}_2(\widetilde{c}) \in \Gamma(R),
 \end{align*}
 where $\mathrm{pr}_1:L \oplus R \to L$ and $\mathrm{pr}_2:L \oplus R \to R$, we see that $c = a+\mathrm{pr}_T(b)$.
 \end{proof}
 
 \begin{remark}\normalfont\label{rem : iterated star criterion}
  If $L_1,L_2,...,L_n$ are Lagrangian subbundles, one may apply Criterion \ref{star criterion} inductively to ensure that $L_1 \star \cdots \star L_n$ is smooth if the vector bundle maps
  \begin{align*}
   && \delta_T : L_1 \oplus L_2 \to TM, && \delta_T : (L_1 \star L_2) \oplus L_3 \to TM, && \cdots && \delta_T : (L_1 \star \cdots \star L_{n-1}) \oplus L_n \to TM
  \end{align*}
all have locally constant rank. In particular, the largest open set $U \subset M$ on which all of these maps have locally constant rank is dense in $M$.
 \end{remark}

\begin{example}\label{example : foliations}\normalfont
 If $E,F \subset TM$ are vector subbundles, and $L=\mathrm{Gr}(E)$, $R=\mathrm{Gr}(F)$ are the ensuing Lagrangian subbundles, then
 \begin{align*}
  L \star R & =(E \cap F)\oplus (E^{\circ} + F^{\circ})
 \end{align*}is smooth exactly when $E \cap F \subset TM$ is a vector subbundle.
\end{example}

\begin{proposition}\label{pro : automatic involutivity}
 The tangent product $L \star R$ of Dirac structures $L$ and $R$ on $M$ is Dirac, provided that $L \star R$ is a Lagrangian subbundle. 
\end{proposition}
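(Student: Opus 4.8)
The plan is to deduce the statement from the description of $\star$ as a pullback, together with Lemma~\ref{lem : pullbacks or pushforwards of Dirac}. By definition, $L \star R = \varDelta^!(L \times R)$, where $\varDelta : M \to M \times M$ is the diagonal map and $L \times R \subset \mathbb{T}(M \times M)$ is the product Lagrangian family. The hypothesis is exactly that this pullback is smooth, i.e.\ a Lagrangian subbundle. Since Lemma~\ref{lem : pullbacks or pushforwards of Dirac} guarantees that a smooth pullback of a Dirac structure is again Dirac, the proposition will follow immediately once we verify that $L \times R$ is itself a Dirac structure on $M \times M$.

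The one substantive point is therefore the involutivity of $L \times R$. First I would fix the canonical identification $\mathbb{T}(M \times M) \cong \mathrm{pr}_1^*(\mathbb{T}M) \oplus \mathrm{pr}_2^*(\mathbb{T}M)$ coming from $T(M \times M) = \mathrm{pr}_1^*(TM) \oplus \mathrm{pr}_2^*(TM)$ and the dual splitting of $T^*(M \times M)$; under it, $L \times R$ is manifestly a Lagrangian subbundle, being the direct sum of the two Lagrangian factors. To check involutivity I would exploit that the Courant tensor $\Upsilon_{L \times R} \in \Gamma(\wedge^3(L \times R)^*)$ is genuinely tensorial, so that it suffices to evaluate it on a spanning set of local sections. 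Such a set is furnished by the sections pulled back from the two factors, namely $(a,0)$ with $a \in \Gamma(L)$ and $(0,b)$ with $b \in \Gamma(R)$.

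The key computation is that the Dorfman bracket on $M \times M$ of two such factorwise sections is again factorwise and reproduces the Dorfman brackets of the factors: the Lie bracket of vector fields, the Lie derivative and the de Rham differential are all compatible with the two projections, and the brackets of a section pulled back from the first factor with one pulled back from the second vanish. Consequently $[(a,0),(a',0)] = ([a,a'],0)$, $[(0,b),(0,b')] = (0,[b,b'])$, and the mixed brackets are zero, so involutivity of $\Gamma(L)$ and $\Gamma(R)$ forces $\Upsilon_{L \times R}$ to vanish on the chosen generators, hence identically. Thus $L \times R$ is Dirac, and the proposition follows as above. I expect the only delicate point to be the bracket-splitting bookkeeping over the product --- in particular, justifying the reduction to a spanning set through the tensoriality of $\Upsilon$, rather than attempting to manipulate the (non-$C^\infty$-bilinear) Dorfman bracket directly --- with everything else being formal.
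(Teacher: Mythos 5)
Your proposal is correct and follows essentially the same route as the paper's first proof, which likewise reduces the statement to $L \star R = \varDelta^!(L \times R)$ and invokes Lemma~\ref{lem : pullbacks or pushforwards of Dirac}; the only difference is that you spell out the (standard) verification that $L \times R$ is Dirac on $M \times M$, which the paper takes for granted. (The paper also records a second, purely algebraic proof via the Courant tensor on the dense open set where Criterion~\ref{star criterion} applies, but that is an alternative rather than a necessity.)
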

\begin{proof}
 Because $L \star R = \varDelta^!(L \times R)$ is the pullback of a Dirac structure, by Lemma \ref{lem : pullbacks or pushforwards of Dirac}, $L \star R$ is a Dirac if it is smooth.
\end{proof}
It is convenient to recast this proof in an algebraic framework more suggestive of the arguments to come:
\begin{proof}[Second proof of Proposition \ref{pro : automatic involutivity}]
 We compute the Courant tensor $\Upsilon_{L \star R}$. Let $U$ be the largest open set in $M$ on which Criterion \ref{star criterion} applies to $L|_U$ and $R|_U$. Then $U$ is dense in $M$, and any three sections $a,b,c \in \Gamma(L|_U \star R|_U)$ can be written as
 \begin{align*}
  && a = u + \xi_L+\xi_R, && b = v + \eta_L+\eta_R, && c = w + \zeta_L+\zeta_R, 
 \end{align*}
 where
 \begin{align*}
  && a_L = u + \xi_L, && b_L = v + \eta_L, && c_L = w + \zeta_L && \text{lie in} \ \Gamma(L|_U), \\
  && a_R = u + \xi_R, && b_R = v + \eta_R, && c_R = w + \zeta_R && \text{lie in} \ ^\Gamma(R|_U).
 \end{align*}
Then
\begin{align*}
 \Upsilon_{L \star R}(a,b,c) & = \langle [a,b],c\rangle \\
 & = \langle [u + \xi_L+\xi_R,v + \eta_L+\eta_R],w + \zeta_L+\zeta_R\rangle \\
 & = \langle [u,v] + [\xi_L,v]+[\xi_R,v] + [u,\eta_L]+[u,\eta_R],w + \zeta_L+\zeta_R\rangle \\
 & = \langle [u,v],\zeta_L \rangle + \langle [\xi_L,v]+ [u,\eta_L],w\rangle + \langle [u,v],\zeta_R \rangle + \langle [\xi_R,v]+[u,\eta_R],w\rangle \\
 & = \Upsilon_{L}(a_L,b_L,c_L)+\Upsilon_{R}(a_R,b_R,c_R)\\
 & = 0
\end{align*}
shows that $\Upsilon_{L \star R}$ vanishes on the dense set $U$, and therefore vanishes on $M$ identically.
\end{proof}

\section{Cleanness of tangent products}\label{sec : Cleanness of tangent products}

It was recently pointed out in \cite{BFT} that, in general, submanifolds $i:X \to M$ which inherit a Dirac structure $i^!(L)$ from its ambient manifold $(M,L)$ may exhibit a \emph{jumping phenomenon}: a leaf of the induced Dirac structure does not necessarily lie in a single leaf of the ambient Dirac structure. This is somewhat surprising in the sense that the characteristic distribution $\mathrm{pr}_T(i^!(L))$ is always contained in $\mathrm{pr}_T(L)$; in fact,
\begin{align*}
 T_x\mathrm{S}_{i^!(L)}(x) = T_xX \cap T_x\mathrm{S}_{L}(x)
\end{align*}for all $x \in X$. The issue is that a $i^!(L)$-path $a_X:I \to i^!(L)$ need not come from an $L$-path $a:I \to L$; in fact, that is the case exactly when $X$ and $\mathrm{S}_{L}(x)$ ``meet cleanly''. 

One should then in principle expect that the tangent product $L \star R$ of Dirac structures $L$ and $R$ would display the same jumping phenomenon described above, since $L \star R$ arises precisely as the Dirac structure which (under favorable circumstances)
\begin{align*}
 (M,L) \times (M,R)
\end{align*}
induces on the diagonal submanifold $\varDelta(M) \subset M \times M$. In particular, it would be expected that the partition of leaves of $L \star R$ cannot be inferred from those of $L$ and $R$ alone. Our first result \underline{negates} that possibility: we show below that if the tangent product of two Dirac structures is again Dirac, then leaves of both structures meet cleanly, and have induced Dirac structures. In particular, the jumping phenomenon is \underline{ruled out} for tangent products. 

The rest of the section is devoted to proving this assertion. Some care must be taken, and a specific definition is required, in dealing with clean intersections of leaves, since the latter are not in general embedded submanifolds. The following definition is a relaxation of the notion of ``leaf-like submanifolds'' introduced in \cite[Appendix]{Frejlich_Marcut_homology} which is tailored to our needs:

\begin{definition}
 A subset $X \subset M$ is called a {\bf leafy submanifold} if the following condition is met: for each $x \in X$ there exists an open neighborhood $U$ of $x$ in $M$, with the property that $U \cap X$ has at most countably many\footnote{Observe that the countability hypothesis has the role of ensuring that the submanifold $X$ be second-countable.} connected components, each of which is an embedded submanifold of $M$.
\end{definition}

Leaf-like submanifolds are leafy submanifolds, and one can prove exactly as in \cite[Appendix]{Frejlich_Marcut_homology} that leafy submanifolds are initial submanifolds $i:X \to M$. So the smooth structure of $X$ (albeit not in general its topology) is induced from that of $M$. 

One of the advantages of the notion of leafy submanifold is that the condition of clean intersection is perfectly analogous to that between embedded submanifolds (cf. \cite[Definition A.2]{BFT}):

\begin{definition}\label{def : clean}
 Two leafy submanifolds $X, Y \subset M$ meet {\bf cleanly} if:
 \begin{enumerate}[a)]
  \item $X \cap Y \subset M$ is a leafy submanifold;
  \item $T(X \cap Y)=TX \cap TY$.
 \end{enumerate}
\end{definition}

The following lemma, adapted from \cite{BFT}, is main technical tool we will need in the sequel:

\begin{lemma}\label{lem : clean iff}
 The following conditions on leafy submanifolds $X, Y$ of a manifold $M$ are equivalent:
 \begin{enumerate}
  \item $X$ and $Y$ meet cleanly;
  \item Around each $z \in X \cap Y$ there is a chart $\varphi : M \supset U \to \mathbb{R}^m$ which maps the connected components
  \begin{align*}
   & X_z \ \text{of} \ U \cap X, & Y_z \ \text{of} \ U \cap Y
  \end{align*}
to the intersection of $\varphi(U)$ with vector subspaces $V_X,V_Y \subset \mathbb{R}^m$:
 \begin{align*}
  & \varphi(X_z) = \varphi(U) \cap V_X, & \varphi(Y_z) = \varphi(U) \cap V_Y.
 \end{align*}
  \item $X \cap Y$ is the disjoint union of (possibly uncountably many) leafy submanifolds $W \subset M$,
  \begin{align*}
   X \cap Y = \coprod_{W \in \Lambda(X,Y)} W,
  \end{align*}with the property that
  \begin{align*}
   && z \in W && \Longrightarrow && T_zW=T_zX_z \cap T_zY_z.
  \end{align*}
 \end{enumerate}
\end{lemma}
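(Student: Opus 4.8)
The plan is to reduce all three statements to the classical clean-intersection dichotomy for \emph{embedded} submanifolds, exploiting the fact that leafy submanifolds are, locally and componentwise, embedded. All three conditions are local around a point $z \in X \cap Y$, so first I fix such a $z$ and, using the definition of leafy submanifold, choose a neighborhood $U$ of $z$ so small that $U \cap X$ and $U \cap Y$ have at most countably many connected components, each embedded in $M$; I write $X_z, Y_z$ for the components through $z$. Since leafy submanifolds are initial, the inclusions give $T_z X = T_z X_z$ and $T_z Y = T_z Y_z$, whence $T_z X \cap T_z Y = T_z X_z \cap T_z Y_z$. This is the observation that makes the tangent conditions in (1) and (3) refer only to the embedded pieces $X_z, Y_z$, and it is what licenses the reduction to the embedded setting.

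The core input is then the purely embedded statement: for the embedded submanifolds $X_z, Y_z$, the existence of a straightening chart as in (2) is equivalent, after shrinking $U$ to a chart domain with convex image, to $X_z \cap Y_z$ being an embedded submanifold with $T(X_z \cap Y_z) = TX_z \cap TY_z$. This is the embedded-submanifold clean-intersection lemma recorded in \cite{BFT}, which I would invoke rather than reprove. Applying condition (2) at \emph{every} point $w \in U \cap X \cap Y$ — not merely at $z$ — shows that each nonempty pairwise intersection $X_i \cap Y_j$ of a component $X_i$ of $U \cap X$ with a component $Y_j$ of $U \cap Y$ is a clean, hence embedded, intersection with the expected tangent spaces.

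Next I globalize. Because the $X_i$ are pairwise disjoint and relatively open in $U \cap X$ (being its countably many connected components, each embedded), and likewise for the $Y_j$, the pieces $X_i \cap Y_j$ are pairwise disjoint and relatively open in $U \cap X \cap Y$; hence every connected component of $U \cap X \cap Y$ lies in a single $X_i \cap Y_j$ and is therefore embedded. Since there are only countably many pairs $(i,j)$, the set $U \cap X \cap Y$ has at most countably many connected components, each embedded, so $X \cap Y$ is a leafy submanifold — this is (1a) — while (1b) holds pointwise by the clean condition. Conversely, if $X \cap Y$ is leafy with $T(X \cap Y) = TX \cap TY$, then the component of $U \cap X \cap Y$ through $z$ is an embedded clean intersection of $X_z$ and $Y_z$, and the lemma of \cite{BFT} produces the straightening chart of (2). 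This establishes $(1) \Leftrightarrow (2)$. For (3) I read the decomposition off the local charts: in the chart $\varphi$ of (2) the plaque through $z$ is $\varphi^{-1}(\varphi(U) \cap V_X \cap V_Y)$, and these clean pieces $X_i \cap Y_j$ patch across charts into leafy submanifolds $W$ with $T_zW = T_z X_z \cap T_z Y_z$, giving (3) from (2); conversely (3) returns (1) since each $W$ is leafy with the prescribed tangent spaces, the only discrepancy being that (3) a priori allows uncountably many global pieces, which the local countability of components reconciles.

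The step I expect to be the main obstacle is precisely this passage from the embedded, componentwise statements to the leafy ones. One must verify carefully that clean intersection is \emph{detected} by the local embedded components $X_z, Y_z$ (this is where the initial-submanifold identification of tangent spaces is essential), and that the countability of the components of $U \cap X$ and $U \cap Y$ is genuinely inherited by $U \cap X \cap Y$ — so that the global intersection is again leafy rather than merely a disjoint union of embedded pieces. The disjointness-and-relative-openness bookkeeping for the $X_i \cap Y_j$ is the technical heart that makes this countability transfer go through.
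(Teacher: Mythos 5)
Your proposal is correct and follows essentially the same route as the paper's own proof: localize, decompose $U \cap X$ and $U \cap Y$ into their countably many embedded connected components, and reduce to the clean-intersection statement for embedded submanifolds proved in \cite{BFT} (Proposition 5.7 there), which the paper likewise invokes rather than reproves. The only difference is that you spell out the countability and disjointness bookkeeping that the paper leaves implicit; note only that the relative openness of the components $X_i$ in $U \cap X$ is not automatic from the definition of leafy submanifold, but it is also not needed, since a connected component of $U \cap X \cap Y$ lies in a single $X_i \cap Y_j$ by connectedness alone.
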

\begin{proof}
Observe that each of the conditions 1-3 is satisfied exactly when it is satisfied when localized on open subsets of $M$. If $x \in X \cap Y$ has an open neighborhood $U$ in $M$, for which
\begin{align*}
 & U \cap X = \coprod_{Z \in \Lambda(U,X)}Z, & U \cap Y = \coprod_{W \in \Lambda(U,Y)}W,
\end{align*}then $U \cap X$ meets $U \cap Y$ cleanly exactly when the (embedded) manifolds $Z$ and $W$ all meet cleanly. Applying \cite[Proposition 5.7]{BFT}, we deduce that the statement of the lemma holds on $U$ --- and therefore holds on all of $M$.
\end{proof}

\begin{theorem}\label{thm : star is clean}
 Let $L$ and $R$ be Dirac structures on $M$ whose tangent product $L \star R$ is again a Dirac structure. 
 \begin{enumerate}
  \item The Dirac structure $R$ induces a Dirac structure on each leaf of $L$, and the Dirac structure $L$ induces a Dirac structure on each leaf of $R$;
  \item Leaves of $L$ meet leaves of $R$ cleanly, and the leaf $\mathrm{S}_{L \star R}(x)$ of $L \star R$ through $x \in M$ is the connected component of $\mathrm{S}_{L}(x) \cap \mathrm{S}_{R}(x)$ through $x$.
 \end{enumerate}
\end{theorem}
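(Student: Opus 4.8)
The plan is to set everything in motion with the pointwise identity
\[
 \mathrm{pr}_T\big((L \star R)_y\big) = \mathrm{pr}_T(L_y) \cap \mathrm{pr}_T(R_y), \qquad y \in M,
\]
which is immediate from the algebraic description of the tangent product: an element of $(L \star R)_y$ has the form $a + \mathrm{pr}_{T^*}(b)$ with $a \in L_y$, $b \in R_y$ and $\mathrm{pr}_T(a) = \mathrm{pr}_T(b)$, so its tangent part lies in both $\mathrm{pr}_T(L_y)$ and $\mathrm{pr}_T(R_y)$; conversely any common tangent vector is realised this way. In particular the characteristic distribution of $L \star R$ is pointwise contained in those of $L$ and of $R$, so that $T_y \mathrm{S}_{L \star R}(y) \subseteq T_y\mathrm{S}_L(y) \cap T_y \mathrm{S}_R(y)$ for every $y$.

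For part (1), fix a leaf $S = \mathrm{S}_L(x)$ with inclusion $i : S \to M$. Along $S$ one has $\mathrm{pr}_T(L \star R)|_S \subseteq \mathrm{pr}_T(L)|_S = TS$, so $(L\star R)|_S \subseteq TS \oplus T^*M|_S$ and the pullback $i^!(L \star R)$ is simply the image of the smooth bundle $(L \star R)|_S$ under the bundle surjection $u + \xi \mapsto u + i^*\xi$. Being a Lagrangian family, this image has constant fibre dimension $\dim S$, and an image of constant rank is a subbundle; hence $i^!(L \star R)$ is smooth, and therefore Dirac by Lemma \ref{lem : pullbacks or pushforwards of Dirac}. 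Now I use that the inclusion of a leaf is a backward map, i.e.\ $i^!(L) = \mathrm{Gr}(\omega_S)$ for the leaf two-form $\omega_S$. Combining Lemma \ref{lem : behavior of star under pullbacks} with Example \ref{example : two-forms},
\[
 i^!(L \star R) = i^!(L) \star i^!(R) = \mathrm{Gr}(\omega_S) \star i^!(R) = \mathcal{R}_{\omega_S}\big(i^!(R)\big),
\]
so that $i^!(R) = \mathcal{R}_{-\omega_S}\big(i^!(L \star R)\big)$ is smooth; since $\omega_S$ is closed, this gauge transform is again Dirac. Thus $R$ induces a Dirac structure on every leaf of $L$, and by commutativity of $\star$ (Lemma \ref{lem : tangent product}) the symmetric statement holds. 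This settles (1).

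For part (2), write $D = L \star R$. The main point --- and the one I expect to be the real obstacle, since it is precisely where the jumping phenomenon would \emph{a priori} strike --- is to show $\mathrm{S}_D(x) \subseteq \mathrm{S}_L(x) \cap \mathrm{S}_R(x)$. Because $\mathrm{pr}_T(D) \subseteq \mathrm{pr}_T(L)$, every $D$-path is tangent to the characteristic foliation of $L$, and I would argue that its base curve therefore remains inside $S = \mathrm{S}_L(x)$. Concretely, part (1) makes $i^!(D)$ a genuine Dirac structure on $S$ with $\mathrm{pr}_T(D)|_S \subseteq TS$, so the pair $(S,D)$ meets cleanly and the clean-pullback correspondence of \cite{BFT} identifies $D$-paths through $x$ with $i^!(D)$-paths; hence $\mathrm{S}_D(x) = \mathrm{S}_{i^!(D)}(x) \subseteq S$. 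The density issues attached to the description of $\Gamma(D)$ in Criterion \ref{star criterion} are bypassed here, since the pointwise identity above holds everywhere. By the symmetric argument $\mathrm{S}_D(x) \subseteq \mathrm{S}_R(x)$, so $\mathrm{S}_D(x) \subseteq \mathrm{S}_L(x) \cap \mathrm{S}_R(x)$, and its tangent space at each $z$ equals $T_z\mathrm{S}_L(z) \cap T_z \mathrm{S}_R(z)$ by the pointwise identity.

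Finally, cleanness and the identification of leaves both follow by assembling these leaves through Lemma \ref{lem : clean iff}. The leaves of $L$ and $R$ are leafy submanifolds, and the leaves of $D$ contained in a given intersection $\mathrm{S}_L(x) \cap \mathrm{S}_R(x)$ furnish a partition
\[
 \mathrm{S}_L(x) \cap \mathrm{S}_R(x) = \coprod_{W} W, \qquad T_z W = T_z\mathrm{S}_L(z) \cap T_z\mathrm{S}_R(z),
\]
which is exactly condition (3) of Lemma \ref{lem : clean iff}; hence leaves of $L$ meet leaves of $R$ cleanly. Cleanness in turn gives $T_z(\mathrm{S}_L(x) \cap \mathrm{S}_R(x)) = T_z\mathrm{S}_L(z) \cap T_z \mathrm{S}_R(z)$, so each $D$-leaf $W$ has full dimension inside the intersection and is therefore open in it; being also a block of the partition it is closed, and being connected it is a connected component. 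In particular $\mathrm{S}_{L \star R}(x)$ is the connected component of $\mathrm{S}_L(x) \cap \mathrm{S}_R(x)$ through $x$, completing the proof.
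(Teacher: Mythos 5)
Your pointwise identity $\mathrm{pr}_T\bigl((L\star R)_y\bigr)=\mathrm{pr}_T(L_y)\cap\mathrm{pr}_T(R_y)$ and your proof of part (1) are correct and essentially the paper's Step 1: the constant-rank-image argument replaces the paper's choice of a complement to $N^*\mathrm{S}_L(x)$ inside $(L\star R)|_{\mathrm{S}_L(x)}$, and the factorization $i^!(L\star R)=\mathrm{Gr}(\omega_S)\star i^!(R)$ is exactly the one used there. The problem is the pivotal step of part (2), namely the inclusion $\mathrm{S}_{L\star R}(x)\subseteq\mathrm{S}_L(x)$. Your first justification --- that the base curve of an $L\star R$-path is tangent to $\mathrm{pr}_T(L)$ and ``therefore remains inside'' the leaf --- is precisely the inference that the jumping phenomenon refutes: pointwise tangency to the characteristic distribution of $L$ does not confine a curve to a leaf of $L$ unless the curve is covered by a \emph{smooth} $L$-path, and such a lift need not exist. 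The paper's own second example in Section \ref{sec : Cleanness of tangent products} exhibits a curve $c(t)=(t,0,\tfrac{1}{2}t^2,\tfrac{1}{2}t^2)$ with $\dot c(t)\in\mathrm{pr}_T(L_{c(t)})$ for all $t$ which nevertheless crosses from a two-dimensional leaf of $L$ into a four-dimensional one. Your fallback --- invoking a ``clean-pullback correspondence'' to identify $L\star R$-paths through $x$ with $i^!(L\star R)$-paths --- presupposes that $\mathrm{S}_L(x)$ meets the leaves of $L\star R$ cleanly, i.e.\ it presupposes control over where the leaves of $L\star R$ go relative to $\mathrm{S}_L(x)$, which is the very thing being proved; smoothness of the pullback alone does not give it (in the jumping example $i_R^!(L)$ is smooth and its leaf is still not contained in a leaf of $L$).

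What closes the gap, and is the content of the paper's Step 2, is that the \emph{easy} direction of the path correspondence does hold: every $i^!(L\star R)$-path pushes forward to an $L\star R$-path via the bundle map $\overline{i_{\mathrm{S}_L(x)}}$, so $\mathrm{S}_{i^!(L\star R)}(y)\subseteq\mathrm{S}_{L\star R}(y)$ for every $y$, and these two leaves have equal tangent spaces by your pointwise identity. Hence $\mathrm{S}_{L\star R}(x)$ is a disjoint union of the open connected submanifolds $\mathrm{S}_{i^!(L\star R)}(y)$, and connectedness forces $\mathrm{S}_{L\star R}(x)=\mathrm{S}_{i^!(L\star R)}(x)\subseteq\mathrm{S}_L(x)$. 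Once that inclusion is secured, your final paragraph is a legitimate variant of the paper's Steps 3--4: you obtain the ``connected component'' statement topologically from cleanness and the open--closed partition by $L\star R$-leaves, where the paper instead lifts smooth paths in $\mathrm{S}_L(x)\cap\mathrm{S}_R(x)$ to $L$- and $R$-paths and adds them to produce an $L\star R$-path.
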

\begin{proof}
We split the proof into four steps. In the first, we prove item 1; in the second and third, we describe the leaves of $L \star R$, and in the fourth we use Lemma \ref{lem : clean iff} to show that leaves meet cleanly. \\
\noindent \emph{Step 1. Leaves have induced Dirac structures.} Denote by
\begin{align*}
 i_{\mathrm{S}_L(x)} :\mathrm{S}_L(x) \to M
\end{align*}
the inclusion of the leaf of $L$ through $x \in M$. Because $N^*\mathrm{S}_L(x) \subset L \star R$, we may choose a splitting of vector bundles
\begin{align*}
 (L \star R)|_{\mathrm{S}_L(x)} =  N^*\mathrm{S}_L(x) \oplus C.
\end{align*}
This implies that the smooth vector bundle map
\begin{align*}
 & C \to \mathbb{T}\mathrm{S}_L(x), & u+\xi \mapsto u+{i_{\mathrm{S}_L(x)}}^*(\xi)
\end{align*}
has image ${i_{\mathrm{S}_L(x)}}^!(L \star R)$ and is a vector bundle isomorphism by a dimension count. In particular, ${i_{\mathrm{S}_L(x)}}^!(L \star R)$ is smooth and hence Dirac. In fact, there is an induced vector bundle map
\begin{align*}
 \overline{i_{\mathrm{S}_L(x)}} : {i_{\mathrm{S}_L(x)}}^!(L \star R) \to L \star R
\end{align*}
given by the composition
\begin{align*}
 {i_{\mathrm{S}_L(x)}}^!(L \star R) \stackrel{\simeq}{\longleftarrow} C \longrightarrow {i_{\mathrm{S}_L(x)}}^*(L \star R) \longrightarrow L \star R
\end{align*}where the second arrow is induced by the inclusion $C \subset L \star R$ and the third is the pullback map of vector bundles. Note finally that ${i_{\mathrm{S}_L(x)}}^!(L) = \mathrm{Gr}(\omega_{\mathrm{S}_L(x)})$, where $\omega_{\mathrm{S}_L(x)}$ denotes the closed two-form induced on $\mathrm{S}_L(x)$ by $L$. Therefore
\begin{align*}
 {i_{\mathrm{S}_L(x)}}^!(L \star R) = \mathrm{Gr}(\omega_{\mathrm{S}_L(x)}) \star {i_{\mathrm{S}_L(x)}}^!(R)
\end{align*}
implies that ${i_{\mathrm{S}_L(x)}}^!(R)$ is a Dirac structure as well. Arguing symmetrically with the roles of $L$ and $R$ reversed, we deduce that also the pullbacks of $L$ and $L \star R$ under the inclusion
\begin{align*}
 i_{\mathrm{S}_R(x)} :\mathrm{S}_R(x) \to M
\end{align*}
of the leaf of $R$ through $x$ is Dirac. This proves 1.\\

\noindent \emph{Step 2. The three induced partitions coincide.} As we observed in the previous step, for every $x \in M$, 
\begin{align*}
 {i_{\mathrm{S}_L(x)}}^!(L \star R) \subset \mathbb{T} \mathrm{S}_L(x)
\end{align*}
is a well-defined Dirac structure on $\mathrm{S}_L(x)$. The leaf through $x$ of the Dirac structure ${i_{\mathrm{S}_L(x)}}^!(L \star R)$ is a subset of the leaf $\mathrm{S}_{L \star R}(x)$ of $L \star R$ through $x$:
\begin{align*}
 \mathrm{S}_{{i_{\mathrm{S}_L(x)}}^!(L \star R)}(x) \subset \mathrm{S}_{L \star R}(x).
\end{align*}
This is because the composition of an ${i_{\mathrm{S}_L(x)}}^!(L \star R)$-path
\begin{align*}
 a:I \to  {i_{\mathrm{S}_L(x)}}^!(L \star R)
\end{align*}
with the bundle map
\begin{align*}
 \overline{i_{\mathrm{S}_L(x)}} : {i_{\mathrm{S}_L(x)}}^!(L \star R) \to L \star R
\end{align*}
is an $L \star R$-path, since $\overline{i_{\mathrm{S}_L(x)}}$ preserves anchors. We therefore have on $M$ two induced partitions by leafy submanifolds:
\begin{align*}
 && \mathscr{P}_1(x)=\mathrm{S}_{L \star R}(x), 
 && \mathscr{P}_2(x)=\mathrm{S}_{{i_{\mathrm{S}_L(x)}}^!(L \star R)}(x).
\end{align*}
The partition $\mathscr{P}_2$ refines $\mathscr{P}_1$, and because
\begin{align*}
 && T_x\mathscr{P}_1(x) = T_x\mathscr{P}_2(x) && \Longrightarrow && \dim \mathscr{P}_1(x) = \dim \mathscr{P}_2(x),
\end{align*}
we deduce that
\begin{align*}
 \mathscr{P}_1(x) = \coprod_{y \in \Upsilon_{12}}\mathscr{P}_2(y)
\end{align*}
is a disjoint union of open submanifolds, where $\Upsilon_{12} \subset \mathscr{P}_1(x)$. Because $\mathscr{P}_1(x)$ is connected, it then follows that
\begin{align*}
 \mathscr{P}_1(x) = \mathscr{P}_2(x).
\end{align*}
Symmetrically, the partitions $\mathscr{P}_1$ and $\mathscr{P}_2$ also agree with that given by pullbacks of $L \star R$ to leaves of $R$:
\begin{align*}
 \mathscr{P}_3(x):=\mathrm{S}_{{i_{\mathrm{S}_R(x)}}^!(L \star R)}(x).
\end{align*}

\noindent \emph{Step 3. Smooth path-connected components and $\star$.} Two points $y_0,y_1 \in Y$ of a subset $Y \subset M$ of a smooth manifold lie in the same \emph{smooth connected component} if there is a smooth curve
\begin{align*}
 && c: I \to M, && \text{such that} && c(0)=y_0, && c(I) \subset Y, && c(1)=y_1.
\end{align*}
This is an equivalence relation on $Y$, and we denote by $\langle \langle Y \rangle \rangle^M_y$ the smooth path-connected component containing $y$. By Step 2 we know that
\begin{align*}
 \mathrm{S}_{L \star R}(x) = \mathrm{S}_{{i_{\mathrm{S}_L(x)}}^!(L \star R)}(x) = \mathrm{S}_{{i_{\mathrm{S}_R(x)}}^!(L \star R)}(x),
\end{align*}
which in particular implies that
\begin{align*}
 \mathrm{S}_{L \star R}(x) \subset \langle \langle \mathrm{S}_L(x) \cap \mathrm{S}_R(x) \rangle \rangle^M_x.
\end{align*}
Conversely, any smooth path $c:I \to \mathrm{S}_L(x) \cap \mathrm{S}_R(x)$ starting at $c(0)=x$ lifts to $L$- and $R$-paths
\begin{align*}
 & a:TI \to L, & b: TI \to R,
\end{align*}covering $c$, in which case
\begin{align*}
 & a \star b:TI \to L \star R, & (a \star b)(t)=a(t)+\mathrm{pr}_{T^*}(b(t))
\end{align*}is an $L \star R$-path covering $c$. Therefore,
\begin{align*}
 \mathrm{S}_{L \star R}(x) = \langle \langle \mathrm{S}_L(x) \cap \mathrm{S}_R(x) \rangle\rangle^M_x.
\end{align*}

\noindent \emph{Step 4. Leaves of $L$ meet leaves of $R$ cleanly.} According to Step 3, we have a disjoint union
\begin{align*}
 \mathrm{S}_L(x) \cap \mathrm{S}_R(x) = \coprod_{y \in \Upsilon_{LR}}\mathrm{S}_{L \star R}(y),
\end{align*}
where $\Upsilon_{LR} \subset \mathrm{S}_L(x) \cap \mathrm{S}_R(x)$. Because
\begin{align*}
 T_y\mathrm{S}_{L \star R}(x) = T_y\mathrm{S}_{L \star R}(y) = T_y\mathrm{S}_L(y) \cap T_y\mathrm{S}_R(y) = T_y\mathrm{S}_L(x) \cap T_y\mathrm{S}_R(x)
\end{align*}
for all $y \in \mathrm{S}_{L \star R}(x)$, the conditions of item 3 of Lemma \ref{lem : clean iff} are fulfilled. Therefore $\mathrm{S}_L(x)$ and $\mathrm{S}_R(x)$ meet cleanly for every $x \in M$, and
\begin{align*}
 \mathrm{S}_{L \star R}(x) = \langle \langle \mathrm{S}_L(x) \cap \mathrm{S}_R(x) \rangle\rangle^M_x
\end{align*}
is the connected component of $\mathrm{S}_L(x) \cap \mathrm{S}_R(x)$ thorough $x$. This completes the proof of 2.
\end{proof}

\subsection*{Two illustrative examples}

A few comments are in order to appreciate the message conveyed by the (admittedly technical) Theorem \ref{thm : star is clean}. One of the messages of the theorem is that the only instance in which $L \star R$ may be smooth is if leaves of $L$ and leaves of $R$ meet cleanly. If leaves of $L$ and of $R$ meet \emph{transversally},
\begin{align*}
 & T_x\mathrm{S}_L(x) + T_x\mathrm{S}_R(x) = T_xM, & x \in \mathrm{S}_L(x) \cap \mathrm{S}_R(x),
\end{align*}
then a straightforward application of Criterion \ref{star criterion} shows that $L \star R$ is smooth. If leaves of $L$ are $R$ merely meet cleanly, one cannot employ Criterion \ref{star criterion} to deduce that $L \star R$ is smooth, but one may still wonder whether that condition is sufficient for smoothness of $L \star R$. Our first example shows that that is not the case: $L \star R$ may fail to be smooth even when leaves meet cleanly:

\begin{example}\normalfont
 Consider on $M=\mathbb{R}^2$ the Dirac structures
 \begin{align*}
  & L = \langle x_1\tfrac{\partial}{\partial x_1}-\mathrm{d} x_2,x_1\tfrac{\partial}{\partial x_2}+\mathrm{d} x_1 \rangle, & R = \langle \mathrm{d} x_1, \tfrac{\partial}{\partial x_2} \rangle.
 \end{align*}
 Observe that
 \begin{align*}
  & \mathrm{S}_L(x) \subset \mathrm{S}_R(x), & \text{if} \ x_1 = 0,
 \end{align*}
 while
 \begin{align*}
  & \mathrm{S}_L(x) \supset \mathrm{S}_R(x), & \text{if} \ x_1 \neq 0.
 \end{align*}
 This implies that leaves of $L$ and of $R$ meet cleanly. Moreover, by dimensional reasons, leaves of $L$ have induced $R$ structures, and leaves of $R$ have induced $L$ structures. However,
 \begin{align*}
  (L \star R)_x = \begin{cases}
                T^*_xM & \text{if} \ x_1 = 0,\\
               \langle \mathrm{d} x_1, \tfrac{\partial}{\partial x_2} \rangle, & \text{if} \ x_1 \neq 0
              \end{cases}
 \end{align*}
 is not smooth.
\end{example}

Our next example is somewhat more subtle: it is an example of two Dirac structures $L$ and $R$ in which leaves of $R$ all have Dirac structures induced by $L$, but not the other way around. That suffices to conclude from Theorem \ref{thm : star is clean} that $L \star R$ is not smooth, but we note yet another reason for the lack of smoothness: that the jumping phenomenon which Theorem \ref{thm : star is clean} is supposed to rule out, occurs on the Dirac structures induced on certain leaves of $R$. Another curious fact about the example to follow is that, while $L \star R$ is not smooth, it agrees on an open, dense set with a smooth Dirac structure. 

\begin{example}\normalfont
 Consider on $M=\mathbb{R}^4$ the Dirac structure
 \begin{align*}
  L = \langle \tfrac{\partial}{\partial x_1} - \mathrm{d} x_2,  \tfrac{\partial}{\partial x_2} + \mathrm{d} x_1, x_3\tfrac{\partial}{\partial x_3} - \mathrm{d} x_4, x_3\tfrac{\partial}{\partial x_4}+\mathrm{d} x_3 \rangle
 \end{align*}
 which corresponds to the Poisson structure
 \begin{align*}
  \pi = \tfrac{\partial}{\partial x_1} \wedge \tfrac{\partial}{\partial x_2} + x_3\tfrac{\partial}{\partial x_3} \wedge \tfrac{\partial}{\partial x_4}.
 \end{align*}
 Let also $f$ denote the smooth function 
 \begin{align*}
  & f : M \to \mathbb{R}, & f(x_1,x_2,x_3,x_4)=\tfrac{1}{2}(x_1^2+x_2^2),
 \end{align*}
 and consider the surjective submersion
 \begin{align*}
  & p:M \to \mathbb{R}^2, & p(x_1,x_2,x_3,x_4) = (x_3-f,x_4-f).
 \end{align*}
 The Dirac structure corresponding to the foliation by fibres of $p$ is
 \begin{align*}
  & R = \left\langle v_1, v_2 ,\mathrm{d} x_3-\mathrm{d} f, \mathrm{d} x_4-\mathrm{d} f \right\rangle, & \text{where} \ \ v_i = \tfrac{\partial}{\partial x_i}+x_i\left(\tfrac{\partial}{\partial x_3}+\tfrac{\partial}{\partial x_4}  \right).
 \end{align*}
 The tangent product of $L$ and $R$ is given by
 \begin{align*}
  L \star R = \begin{cases}
               \left\langle x_2v_1 - x_1v_2, \mathrm{d} f, \mathrm{d} x_3, \mathrm{d} x_4\right\rangle & \text{on} \ A,\\
               \left\langle v_1 - \mathrm{d} x_2, v_2+\mathrm{d} x_1,\mathrm{d} x_3-\mathrm{d} f, \mathrm{d} x_4-\mathrm{d} f \right\rangle & \text{on} \ M \diagdown A,               
              \end{cases}
 \end{align*}
 where $A \subset M$ denotes the subset
 \begin{align*}
   A = (\mathbb{R}^2 \diagdown \{(0,0)\}) \times \{0\} \times \mathbb{R}.
 \end{align*} 
Note that:
\begin{enumerate}[a)]
 \item The parity of $L \star R$ on points of $A$ and of $M \diagdown A$ is different; in particular, $L \star R$ is not smooth; 
 \item The open, dense set
 \begin{align*}
  U:=M \diagdown \mathrm{S}_L(0) = \{ x \in M \ | \ x_3 \neq 0\}
 \end{align*}
 is the maximal open set to which Criterion \ref{star criterion} applies to $L$ and $R$, and the restriction of $L \star R$ to $U$ has an extension to a Dirac structure on all of $M$ --- namely, the graph of the rank-two Poisson structure
 \begin{align*}
  & \Pi \in \Gamma(\wedge^2T\mathscr{F}), & \Pi = v_1 \wedge v_2.
 \end{align*}

 \item Leaves of $R$ have induced $L$-structures. Indeed, each leaf $i_R:\mathrm{S}_R(x) \to M$ of $R$ meets the section $\{(0,0)\} \times \mathbb{R}^2$ of $p$ the at a single point $(0,0,x_3,x_4)$, and coincides with the image of the embedding
 \begin{align*}
  & j:\mathbb{R}^2 \to M, & j(x_1,x_2)=(x_1,x_2,x_3+f,x_4+f).
 \end{align*}
 Hence each leaf of $R$ inherits a symplectic structure $i_R^!(L)$, since
 \begin{align*}
  j^!(L) = \mathrm{Gr}(\tfrac{\partial}{\partial x_1} \wedge \tfrac{\partial}{\partial x_2}).
 \end{align*}
Note however that leaves
\begin{align*}
 & \mathrm{S}_R(0,0,x_3,x_4), & x_3 \leqslant 0
\end{align*}meet the singular locus
\begin{align*}
 & \mathrm{Sing}(L) = \{ x \in M \ | \ x_3=0\}
\end{align*}
and therefore exhibit the jumping phenomenon. For example, the curve
\begin{align*}
 & c: I \to \mathrm{S}_R(0), & c(t) = (t,0,\tfrac{1}{2}t^2,\tfrac{1}{2}t^2)
\end{align*}lifts to an $i_R^!(L)$-path
\begin{align*}
 & a_{i_R^!(L)}: TI \to i_R^!(L), & a_{i_R^!(L)} = -\mathrm{d} {x_2}_{c(t)},
\end{align*}but the unique $L$-path $ a_{L}: TI \to L$ which projects to $c$ is
\begin{align*}
 a_{L} = \begin{cases}
          \left(-\mathrm{d} x_2 + \tfrac{x_1}{x_3}(\mathrm{d} x_3-\mathrm{d} x_4)\right)_{c(t)}, & \text{if} \ t \neq 0;\\
          -\mathrm{d} {x_2}_{c(0)}, & \text{if} \ t = 0,
         \end{cases}
\end{align*}
which is \emph{discontinuous} at $t=0$. This reflects the fact that $i_R^!(L)$-paths need not be $i_R$-related to $L$-paths; said otherwise, that the leaf $\mathrm{S}_{i_R^!(L)}(0)$ of $i_R^!(L)$ on $\mathrm{S}_{R}(0)$ is not the connected component of the intersection of $\mathrm{S}_{L}(0) \cap \mathrm{S}_{R}(0)$ which contains the origin, since
\begin{align*}
 & \mathrm{S}_{i_R^!(L)}(0)=\mathrm{S}_{R}(0), & \mathrm{S}_{L}(0) \cap \mathrm{S}_{R}(0) = \{0\}
\end{align*}

\item Not all leaves $i_L: \mathrm{S}_L(x) \to M$ of $L$ inherit $R$-structures: for instance, on the leaf through the origin,
\begin{align*}
 \mathrm{S}_L(0) = \mathbb{R}^2 \times \{(0,0)\}
\end{align*}the induced Lagrangian family is
\begin{align*}
 i_L^!(R) = \begin{cases}
             \langle \pi^{\sharp}(\mathrm{d} f),\mathrm{d} f\rangle & \text{outside the origin}\\
             T_0\mathrm{S}_L(0)& \text{at the origin}
            \end{cases}
\end{align*}
and is therefore not smooth.
\end{enumerate}
\end{example}

\section{Cotangent products}\label{sec : Cotangent product}

As we discussed so far, the set of Lagrangian families has a well-defined product $\star$, which has the following simple interpretation: if two Dirac structures $L$ and $R$ have a smooth tangent product $L \star R$, then $L \star R$ is Dirac, with leaves given by the connected components of the intersections of leaves of $L$ and $R$, and closed two-form the sum of those coming from $L$ and from $R$.

The purely algebraic description of $\star$ --- as opposed to its definition as a pullback Lagrangian family --- can be dualized into a product:
\begin{align}\label{eq : circledast}
& \circledast: \mathrm{Lag}(\mathbb{T} M) \times \mathrm{Lag}(\mathbb{T} M) \longrightarrow \mathrm{Lag}(\mathbb{T} M), & (L,R) \mapsto L \circledast R,
\end{align}explicitly given by:
\begin{align}\label{eq : explicit definition of circledast}
 L \circledast R = \{ a + \mathrm{pr}_T(b) = \mathrm{pr}_T(a)+b \ | \ (a,b) \in L \times R, \ \mathrm{pr}_{T^*}(a-b)=0\}.
\end{align}

\begin{lemma}[Cotangent product]\label{lem : cotangent product}
 The map $\circledast$ defines an associative and commutative product on Lagrangian families, for which $T^*M$ acts as unity and $TM$ as zero:
  \begin{multicols}{2}
 \begin{enumerate}[a)]
 \setcounter{enumi}{\value{conditions2}}
 \item $L \circledast (R \circledast S) = (L \circledast R) \circledast S$;
 \item $L \circledast R=R \circledast L$;
 \setcounter{conditions2}{\value{enumi}}
\end{enumerate}
\begin{enumerate}[a)]
 \setcounter{enumi}{\value{conditions2}}
 \item $L \circledast T^*M = L$;
 \item $L \circledast TM=TM$.
 \setcounter{conditions2}{\value{enumi}}
\end{enumerate}
 \end{multicols}Moreover, it is compatible with rescaling in the sense that $\mathcal{R}_t(L \circledast R) = \mathcal{R}_t(L) \circledast \mathcal{R}_t(R)$.
\end{lemma}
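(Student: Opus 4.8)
The plan is to derive the algebraic identities a)--d) from the corresponding properties of the tangent product in Lemma~\ref{lem : tangent product} by exhibiting an explicit duality between $\star$ and $\circledast$, and then to verify the rescaling compatibility by a short direct computation, since the latter has no counterpart in Lemma~\ref{lem : tangent product}.

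First I would produce, at each point $x\in M$, an isometric involution $\sigma$ of $\mathbb{T}_xM$ interchanging the two summands. Choosing a basis $e_1,\dots,e_n$ of $T_xM$ with dual basis $e^1,\dots,e^n$ and setting $\sigma(e_i)=e^i$, $\sigma(e^i)=e_i$, a one-line check gives $\langle\sigma a,\sigma b\rangle=\langle a,b\rangle$; hence $\sigma$ preserves \eqref{eq : symmetric bilinear pairing}, sends Lagrangian subspaces to Lagrangian subspaces, satisfies $\sigma^2=\mathrm{id}$, and obeys $\sigma(T_xM)=T^*_xM$, $\sigma(T^*_xM)=T_xM$ together with $\mathrm{pr}_T\circ\sigma=\sigma\circ\mathrm{pr}_{T^*}$ and $\mathrm{pr}_{T^*}\circ\sigma=\sigma\circ\mathrm{pr}_T$. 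The crucial identity $\sigma(L\star R)=\sigma(L)\circledast\sigma(R)$ then follows by comparing \eqref{eq : explicit definition of circledast} with the explicit formula for $\star$: writing $a'=\sigma a$, $b'=\sigma b$, the defining condition $\mathrm{pr}_{T^*}(a'-b')=0$ of $\circledast$ becomes $\mathrm{pr}_T(a-b)=0$, and the generator $a'+\mathrm{pr}_T(b')$ becomes $\sigma(a+\mathrm{pr}_{T^*}(b))$. I emphasize that $\sigma$ need be neither canonical nor globally smooth: since a)--d) are fibrewise identities between Lagrangian families, having such a $\sigma$ at each point suffices.

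Because $\sigma$ is an involutive bijection of $\mathrm{Lag}(\mathbb{T}_xM)$ intertwining $\star$ and $\circledast$, the four identities transport at once. Writing each argument as $\sigma(\widehat{L})$ with $\widehat{L}:=\sigma(L)$ and pushing $\sigma$ through, commutativity and associativity of $\circledast$ reduce to those of $\star$; and since $\sigma(TM)=T^*M$, $\sigma(T^*M)=TM$, the unit law $L\star TM=L$ and zero law $L\star T^*M=T^*M$ become $L\circledast T^*M=L$ and $L\circledast TM=TM$ respectively.

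The remaining statement, $\mathcal{R}_t(L\circledast R)=\mathcal{R}_t(L)\circledast\mathcal{R}_t(R)$, I would prove directly. For $c=a+\mathrm{pr}_T(b)\in L\circledast R$ with $\mathrm{pr}_{T^*}(a)=\mathrm{pr}_{T^*}(b)$ one has $\mathrm{pr}_T(c)=\mathrm{pr}_T(a)+\mathrm{pr}_T(b)$ and $\mathrm{pr}_{T^*}(c)=\mathrm{pr}_{T^*}(a)$, so that $\mathcal{R}_t(c)=\mathcal{R}_t(a)+\mathrm{pr}_T(\mathcal{R}_t(b))$ while $\mathrm{pr}_{T^*}(\mathcal{R}_t(a))=\mathrm{pr}_{T^*}(\mathcal{R}_t(b))$; this exhibits $\mathcal{R}_t(c)$ as an element of $\mathcal{R}_t(L)\circledast\mathcal{R}_t(R)$, and the reverse inclusion follows either by repeating the computation with $t^{-1}\in\mathbb{R}^{\times}$ or by a dimension count. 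The only points requiring genuine care are verifying that $\sigma$ really is an isometry, so that the placement of $\mathrm{pr}_T$ and $\mathrm{pr}_{T^*}$ in the intertwining identity is correct, and recognizing that the rescaling law is the one assertion not obtained from Lemma~\ref{lem : tangent product} and must be checked by hand as above.
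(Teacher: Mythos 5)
Your argument is correct, but it takes a genuinely different route from the paper's. You establish well-definedness and the identities a)--d) by exhibiting a fibrewise isometric involution $\sigma$ of $(\mathbb{T}_xM,\langle\cdot,\cdot\rangle)$ that swaps $T_xM$ and $T^*_xM$ and intertwines $\mathrm{pr}_T$ with $\mathrm{pr}_{T^*}$, whence $\sigma(L\star R)=\sigma(L)\circledast\sigma(R)$; all four identities (and the fact that $L\circledast R$ is again Lagrangian) then transport from Lemma~\ref{lem : tangent product}, and your observation that a pointwise, non-canonical $\sigma$ suffices is the right one, since these are fibrewise statements about Lagrangian families. The paper instead proves that $L\circledast R$ is Lagrangian via the pointwise normal form $L_x=\mathrm{Gr}(\pi)\circledast\mathrm{Gr}(F)$ with $F=\mathrm{pr}_{T^*}(L_x)^{\circ}$, and then declares the algebraic identities ``analogous'' to the tangent case. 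Your duality makes precise the informal $TM\leftrightarrow T^*M$ symmetry the authors invoke, and it transfers every fibrewise algebraic fact about $\star$ to $\circledast$ in one stroke; the paper's normal form, by contrast, is a reusable structural statement about Lagrangian subspaces (it reappears, for instance, in the proof of the lemma on $\diamondsuit_T$ and $\diamondsuit_{T^*}$ in Section~\ref{sec : Comparison with Dirac pairs}), so each approach buys something the other does not. Your direct verification of $\mathcal{R}_t(L\circledast R)=\mathcal{R}_t(L)\circledast\mathcal{R}_t(R)$, with the reverse inclusion via $t^{-1}$ or a rank count, is also correct, and you are right that this assertion has no $\star$-counterpart to transport and must be checked by hand.
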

\begin{proof}
 That the cotangent product $L \circledast R$ of two Lagrangian families $L,R$ is again Lagrangian can be argued as follows: for any Lagrangian family $L$ on $M$ and $x \in M$, there is vector space $F \subset T_xM$ and a bivector $\pi \in \wedge^2T_xM$, with the property that $L = \mathrm{Gr}(\pi) \circledast \mathrm{Gr}(F)$. Indeed, let $F:=\mathrm{pr}_{T^*}(L)^{\circ}$ and any section $\nu : F^{\circ} \to L$. Then $\langle \nu(\xi),\eta\rangle$ is skew-symmetric in $\xi,\eta \in F^{\circ}$ because $L$ is isotropic, and so can be extended to a bivector $\pi \in \wedge^2T_xM$. This means that $\mathrm{Gr}(\pi) \circledast \mathrm{Gr}(F) \subset L$, and so equality ensues. The remainder of the proof is analogous to that of Lemma \ref{lem : tangent product}.
\end{proof}

As expected, smoothness is not preserved by $\circledast$:

\begin{example}\normalfont
If $L$ is a Lagrangian family, then
\begin{align*}
 & L \circledast \mathcal{R}_{-1}(L) = \mathrm{pr}_T(L) \oplus \mathrm{pr}_T(L)^{\circ}.
\end{align*}Hence for a smooth $L$, $L \circledast \mathcal{R}_{-1}(L)$ is smooth exactly when $\mathrm{pr}_T(L) \subset TM$ has locally constant rank.
\end{example}

The criterion dual to Criterion \ref{star criterion} is
\begin{criterion}\label{circledast criterion}
 For Lagrangian subbundles $L,R \subset \mathbb{T}M$, a sufficient condition for $L \circledast R$ to be smooth is that the vector bundle map
 \begin{align*}
  & \delta_{T^*} : L \oplus R \to T^*M, & \delta_{T^*}(a,b):=\mathrm{pr}_{T^*}(a-b)
 \end{align*}have locally constant rank. If that is the case, then
 \begin{align*}
  \Gamma(L \circledast R) = \{a+\mathrm{pr}_{T}(b) \ | \ a \in \Gamma(L), \ b \in \Gamma(R), \ \mathrm{pr}_{T^*}(a)=\mathrm{pr}_{T^*}(b)\}.
 \end{align*}
\end{criterion}

The most transparent examples of cotangent products are those between bivectors and between vector subbundles of $TM$, in which case the cotangent product reduces to addition:

\begin{example}\label{example : coproduct as gauge}\normalfont
 The cotangent product of the graph $\mathrm{Gr}(\pi)$ of a bivector $\pi$ with any Lagrangian subbundle $L$ is smooth, and 
 \[\mathrm{Gr}(\pi) \circledast L = \mathcal{R}_{\pi}L = \{a+\pi^{\sharp}\mathrm{pr}_{T^*}(a)\ | \ a \in L\}.\]In particular, $\mathrm{Gr}(\pi) \circledast \mathrm{Gr}(\pi')=\mathrm{Gr}(\pi+\pi')$. 
\end{example}

\begin{example}\label{example : foliations 2}\normalfont
 If $E,F \subset TM$ are vector subbundles, and $L=\mathrm{Gr}(E)$, $R=\mathrm{Gr}(F)$ are the ensuing Lagrangian subbundles, then
 \begin{align*}
  L \circledast R & =(E + F)\oplus (E^{\circ} \cap F^{\circ})
 \end{align*}is smooth exactly when $E + F \subset TM$ is a vector subbundle.
\end{example}

\begin{lemma}
If two Lagrangian families $L$ and $R$ on $M$ are pushed forward by a smooth map $\phi : M \to N$, then so does $L \circledast R$, and
  \begin{align*}
   \phi_!(L \circledast R) = \phi_!(L) \circledast \phi_!(R).
  \end{align*}  
\end{lemma}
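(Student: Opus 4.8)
The plan is to prove the identity fibrewise from the explicit description \eqref{eq : explicit definition of circledast} of $\circledast$, showing that the two Lagrangian families agree in each fibre $\mathbb{T}_y N$ over $y \in N$. Throughout I would work with the relation $\sim_{\phi}$ and the pointwise pushforwards $\phi_!$, recalling that ``$L$ and $R$ are pushed forward by $\phi$'' means precisely that $L$ and $R$ are $\phi$-invariant, so that $\phi_!(L)$ and $\phi_!(R)$ are well-defined Lagrangian families on $N$. The argument will simultaneously establish that $L \circledast R$ is $\phi$-invariant (so that $\phi_!(L \circledast R)$ is defined) and that the displayed equality holds; I would prove the two inclusions separately.

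First I would prove $\phi_!(L \circledast R) \subseteq \phi_!(L) \circledast \phi_!(R)$. Fix $x \in M$ with $\phi(x)=y$, take $c \in (L \circledast R)_x$ and $c' \in \mathbb{T}_y N$ with $c \sim_{\phi} c'$, and by \eqref{eq : explicit definition of circledast} write $c = a + \mathrm{pr}_{T}(b)$ with $a \in L_x$, $b \in R_x$ and $\mathrm{pr}_{T^*}(a) = \mathrm{pr}_{T^*}(b)$. Setting $\eta' := \mathrm{pr}_{T^*}(c')$, relatedness gives $\phi^*\eta' = \mathrm{pr}_{T^*}(c) = \mathrm{pr}_{T^*}(a)$, and the natural candidates are $a' := \phi_* \mathrm{pr}_{T}(a) + \eta'$ and $b' := \phi_* \mathrm{pr}_{T}(b) + \eta'$. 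A direct check yields $a \sim_{\phi} a'$ and $b \sim_{\phi} b'$, hence $a' \in \phi_!(L)_y$ and $b' \in \phi_!(R)_y$, while $\mathrm{pr}_{T^*}(a') = \eta' = \mathrm{pr}_{T^*}(b')$ and $a' + \mathrm{pr}_{T}(b') = c'$, so $c' \in (\phi_!(L) \circledast \phi_!(R))_y$. This direction uses only the definitions and the bilinear verifications of $\sim_{\phi}$.

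The reverse inclusion is where the one genuine subtlety lies, and it is the step I expect to be the main obstacle. Take $c' = a' + \mathrm{pr}_{T}(b') \in (\phi_!(L) \circledast \phi_!(R))_y$ with $a' \in \phi_!(L)_y$, $b' \in \phi_!(R)_y$ and $\mathrm{pr}_{T^*}(a') = \mathrm{pr}_{T^*}(b') =: \eta'$. Unwinding $\phi_!$, a witness $a \in L$ for $a'$ and a witness $b \in R$ for $b'$ live a priori over possibly \emph{different} preimages of $y$, whereas the cotangent product \eqref{eq : explicit definition of circledast} is a fibrewise operation requiring $a$ and $b$ over a \emph{common} point. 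This is exactly where $\phi$-invariance enters: fixing a single preimage $x$ of $y$, invariance gives $\phi_!(L)_y = \phi_!(L_x)$ and $\phi_!(R)_y = \phi_!(R_x)$, so one may choose $a \in L_x$ with $a \sim_{\phi} a'$ and $b \in R_x$ with $b \sim_{\phi} b'$ over the same $x$. Then $\mathrm{pr}_{T^*}(a) = \phi^* \eta' = \mathrm{pr}_{T^*}(b)$, so $c := a + \mathrm{pr}_{T}(b) \in (L \circledast R)_x$, and a direct computation gives $c \sim_{\phi} c'$, whence $c' \in \phi_!(L \circledast R)_y$.

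Combining the two inclusions shows that for every preimage $x$ of every $y \in N$ one has $\phi_!((L \circledast R)_x) = (\phi_!(L) \circledast \phi_!(R))_y$; in particular the left-hand side is independent of $x$, which is precisely the assertion that $L \circledast R$ is $\phi$-invariant, and the claimed equality of Lagrangian families on $N$ follows (the right-hand side being a Lagrangian family by Lemma \ref{lem : cotangent product}). The only input beyond the definitions is $\phi$-invariance of $L$ and $R$, used in the reverse inclusion, and everything else reduces to the routine compatibility checks for $\sim_{\phi}$ indicated above.
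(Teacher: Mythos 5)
Your argument is correct: both inclusions check out, and you rightly isolate the one non-trivial point, namely that $\phi$-invariance of $L$ and $R$ is what lets you choose witnesses $a\in L_x$ and $b\in R_x$ over a \emph{common} preimage $x$ in the reverse inclusion (this is unavoidable, since unlike $\star$ the product $\circledast$ has no description as a pullback by the diagonal, so no functoriality shortcut is available). The paper states this lemma without proof, and your direct fibrewise verification from \eqref{eq : explicit definition of circledast} is precisely the routine argument it implicitly relies on.
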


\begin{remark}\normalfont
 One of the algebraic difficulties of tangent- and cotangent products is the lack of a distributive property between the two products $\star$ and $\circledast$. For example, none of the relations below
 \begin{align*}
  && L_1 \star (L_2 \circledast L_3) = (L_1 \star L_2) \circledast L_3, && L_1 \star (L_2 \circledast L_3) = (L_1 \star L_2) \circledast (L_1 \star L_3)
 \end{align*}
holds true in general. For example, they do not hold if $L_1=L_2=L_3$ is the graph of the closed two-form $\mathrm{d} x \wedge \mathrm{d} y$ on $M=\mathbb{R}^2$.
\end{remark}

\section{Concurrence}

While the discussions about tangent- and cotangent products ran essentially in parallel so far, the seeming symmetry between $\star$ and $\circledast$ breaks down when it comes to the involutivity condition
\[
 [\Gamma(L),\Gamma(L)] \subset \Gamma(L)
\]which selects Dirac structures among Lagrangian subbundles. While the tangent product of Dirac structures which is again smooth is automaticaly Dirac, the analogous statement for cotangent products is false. This reflects the very different behavior the Dorfman bracket displays when restricted to the tangent or the cotangent bundles, as well as the absence of a geometric description for $L \circledast R$ as we had for $L \star R = \varDelta^!(L \times R)$. This motivates the central definition of the paper:\\

\begin{maindefinition}\label{def : concurring Dirac}
 Two Dirac structures $L$ and $R$ {\bf concur} if $L \circledast R$ is a Dirac structure. 
\end{maindefinition}

\begin{remark}\normalfont
As pointed out earlier, in Dirac geometry it is convenient to separate --- as much as possible -- algebraic conditions from differential geometric ones: we call an arbitrary Lagrangian family {\bf involutive} if, for any three \underline{smooth} local sections $a,b,c \in \Gamma(U,L)$, the expression $\langle [a,b],c\rangle \in C^{\infty}(U)$ vanishes identically. For a Lagrangian subbundle, the expression at hand is the evaluation of the Courant tensor of $L$ on these three sections; hence Dirac structures are those Lagrangian families which are smooth and involutive. The condition of involutivity may be rather meaningless --- for instance for Lagrangian families without non-trivial smooth sections  --- and we shall only find it useful for Lagrangian families which are smooth on an open, dense subset of $M$
(c.f. Remark \ref{rem : iterated star criterion}).
\end{remark}

\begin{definition}\label{def : weak concurring}
 Two Dirac structures $L$ and $R$ {\bf concur weakly} if the Lagrangian family $L \circledast R$ is involutive, in which case we write $L \smile R$.
\end{definition}

\begin{lemma}
 Two Dirac structures concur weakly exactly when they concur on an open, dense subset.
\end{lemma}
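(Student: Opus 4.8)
The plan is to prove the two implications separately, pivoting on the open, dense set on which $L \circledast R$ is automatically smooth. By the argument dual to Remark \ref{rem : iterated star criterion} --- that is, by lower semicontinuity of the rank of the vector bundle map $\delta_{T^*} : L \oplus R \to T^*M$ of Criterion \ref{circledast criterion} --- there is a largest open set $U \subseteq M$ on which $\delta_{T^*}$ has locally constant rank, and this $U$ is dense in $M$. On $U$, Criterion \ref{circledast criterion} guarantees that $(L \circledast R)|_U$ is a Lagrangian subbundle. This set $U$ will serve as the open, dense locus of smoothness throughout.

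First I would prove that weak concurrence implies concurrence on an open, dense subset. Assuming $L \smile R$, so that $L \circledast R$ is involutive in the sense of Definition \ref{def : weak concurring}, I restrict attention to the set $U$ above. There $(L \circledast R)|_U$ is smooth, and involutivity of $L \circledast R$ over arbitrary opens of $M$ restricts verbatim to involutivity of $(L \circledast R)|_U$ over opens of $U$, since a smooth section of $(L \circledast R)|_U$ is in particular a smooth section of $L \circledast R$. Because a smooth Lagrangian subbundle is Dirac precisely when it is involutive --- its Courant tensor $\Upsilon$ being the complete obstruction --- it follows that $(L \circledast R)|_U$ is a Dirac structure, i.e. $L$ and $R$ concur on $U$.

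For the converse, I would assume that $L$ and $R$ concur on some open dense $V \subseteq M$, so that $(L \circledast R)|_V$ is a Dirac structure, and verify involutivity of $L \circledast R$ over an arbitrary open $W \subseteq M$. Given smooth sections $a,b,c \in \Gamma(W, L \circledast R)$, the function $\langle [a,b],c \rangle$ is smooth on $W$, because $a,b,c$ are in particular smooth sections of $\mathbb{T}M$ and both the Dorfman bracket and the pairing are smooth. On $W \cap V$, which is dense in $W$ since $V$ is dense in $M$, the sections $a,b,c$ restrict to smooth sections of the Dirac structure $(L \circledast R)|_V$, so $\langle [a,b],c\rangle$ vanishes there by involutivity; being a smooth function vanishing on a dense subset, it then vanishes identically on $W$. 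This is exactly $L \smile R$.

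The argument is largely bookkeeping, and the one step carrying real content --- the same device used in the second proof of Proposition \ref{pro : automatic involutivity} --- is the continuity/density passage in the converse: once the three input sections are smooth, $\langle[a,b],c\rangle$ is a genuine smooth function, so its vanishing on the open dense locus of concurrence propagates to all of $W$. The main point to keep straight, rather than a genuine obstacle, is the interplay between the two meanings of ``section'': a smooth section of the possibly non-smooth family $L \circledast R$ is by definition a smooth section of $\mathbb{T}M$ landing in it pointwise, and it is precisely this that legitimizes both the restriction step in the forward direction and the density step in the converse.
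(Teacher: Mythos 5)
Your proof is correct and follows essentially the same route as the paper's: both directions pivot on the open, dense set where $\delta_{T^*}$ has locally constant rank (so that Criterion \ref{circledast criterion} makes $L \circledast R$ smooth there), and the converse is the same continuity-plus-density argument showing $\langle[a,b],c\rangle$ vanishes identically once it vanishes on a dense subset. Your write-up is in fact slightly more explicit than the paper's about why involutivity restricts to the smooth locus and about the two meanings of ``section,'' but the content is identical.
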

\begin{proof}
 Let $L$ and $R$ be Dirac structures on $M$ which concur weakly. Then the largest open subset $U$ of $M$ for which Criterion \ref{circledast criterion} applies to $L|_U$ and $R|_U$ is dense. Therefore $L|_U \circledast R|_U$ is smooth, whence $L|_U $ and $R|_U$ concur. Conversely, suppose $U \subset M$ is an open, dense subset on which $L|_U$ and $R|_U$ concur. Suppose $a,b,c \in \Gamma(V,L\circledast R)$ are local sections, for some open set $V \subset M$. Because $V \cap U$ is dense in $V$, and the restriction of $\langle [a,b],c\rangle \in C^{\infty}(V)$ to $V \cap U$ vanishes, we conclude that $\langle [a,b],c\rangle$ vanishes identically. Hence $L$ and $R$ concur weakly.
\end{proof}

In the two examples below, we mention the two most transparent --- even prototypical --- manifestations of the concurrence condition.

\begin{example}\label{example : coproduct as gauge inv}\normalfont
 The graphs $L=\mathrm{Gr}(\pi_L)$ and $R=\mathrm{Gr}(\pi_R)$ of Poisson structures concur iff $\pi_L$ and $\pi_R$ commute as Poisson structures.
\end{example}

\begin{example}\label{example : foliations 3}\normalfont
 The graphs $L=\mathrm{Gr}(\mathscr{F}_L)$ and $R=\mathrm{Gr}(\mathscr{F}_R)$ of foliations on $M$ concur iff $T\mathscr{F}_L+T\mathscr{F}_R$ is the tangent bundle to a foliation on $M$.
\end{example}

The condition that Dirac structures concur weakly is rather delicate. For example, if Dirac structures $L_1,L_2,L_3$ are such that $L_1 \smile L_2$ and $L_2 \smile L_3$, it does not follow that $L_1 \smile L_3$ (for example, the first condition is always satisfied when $L_2=TM$, regardless of who $L_1$ and $L_3$ are). We have, however, the following quadratic relations between Courants tensors of cotangent products:

\begin{proposition}\label{pro : concurrence pairwise and scalars}
 Let $L_1,...,L_n \subset \mathbb{T}M$ be Lagrangian subbundles. 
 \begin{enumerate}[a)]
  \item The cotangent product
 \begin{align*}
  L_{1} \circledast L_{2} \circledast \cdots \circledast L_{n}
 \end{align*}is involutive if each Lagrangian subbundle is a Dirac structure, and any two Dirac structures concur weakly:
 \begin{align*}
  & L_{i} \smile L_{j}, & 1 \leqslant i,j \leqslant n.
 \end{align*}
 \item For all $t_1,...,t_n \in \mathbb{R}^{\times}$, we have that
 \begin{align*}
  && \mathcal{R}_{t_1}(L_1) \circledast \cdots \circledast \mathcal{R}_{t_n}(L_n) \ \ \text{involutive} && \Longleftrightarrow && L_{1} \circledast L_{2} \circledast \cdots \circledast L_{n}\ \ \text{involutive.}
 \end{align*}

 \end{enumerate}
\end{proposition}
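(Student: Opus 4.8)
The plan is to prove both parts at once, by establishing a master decomposition of the Courant expression of an iterated cotangent product that generalizes the second proof of Proposition \ref{pro : automatic involutivity}. First I would localize: on the dense open set $U\subset M$ where the iterated form of Criterion \ref{circledast criterion} applies (Remark \ref{rem : iterated star criterion}, read for $\circledast$ in place of $\star$), every section of $L_1\circledast\cdots\circledast L_n$ has the shape $A=\sum_{\alpha}u_\alpha^A+\xi^A$ with $a_\alpha:=u_\alpha^A+\xi^A\in\Gamma(L_\alpha)$ all sharing the common covector $\xi^A$; equivalently $L_1\circledast\cdots\circledast L_n\cong L_1\times_{T^*M}\cdots\times_{T^*M}L_n$. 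Since involutivity is tested only on smooth local sections and $U$ is dense, it suffices to verify the vanishing of $\langle[A,B],C\rangle$ on $U$ and invoke density.

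Next I would compute directly. Writing $U_A=\sum_\alpha u_\alpha^A$ for the total anchor and expanding both the Dorfman bracket and the pairing, the expression $\langle[A,B],C\rangle$ becomes three double sums, coming from $\iota_{U_C}\mathscr{L}_{U_A}\xi^B$, from $\iota_{U_C}\iota_{U_B}\mathrm{d}\xi^A$, and from $\iota_{[U_A,U_B]}\xi^C$. Splitting each double sum into its diagonal and off-diagonal parts, the diagonal parts reassemble into $\sum_\alpha\Upsilon_{L_\alpha}(a_\alpha,b_\alpha,c_\alpha)$, while the off-diagonal parts reassemble, index pair by index pair, into the binary cross terms, giving
\[\langle[A,B],C\rangle=\sum_\alpha\Upsilon_{L_\alpha}(a_\alpha,b_\alpha,c_\alpha)+\sum_{\alpha<\beta}\Theta_{\alpha\beta}(A,B,C),\]
where $\Theta_{\alpha\beta}=\langle[A_{\alpha\beta},B_{\alpha\beta}],C_{\alpha\beta}\rangle-\Upsilon_{L_\alpha}(a_\alpha,b_\alpha,c_\alpha)-\Upsilon_{L_\beta}(a_\beta,b_\beta,c_\beta)$ is the cross term of the binary product, with $A_{\alpha\beta}=u_\alpha^A+u_\beta^A+\xi^A\in\Gamma(L_\alpha\circledast L_\beta)$. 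The one point needing care is the combinatorial check that the off-diagonal terms regroup exactly into the $\Theta_{\alpha\beta}$, which is the higher-$n$ analogue of the cancellation observed in the second proof of Proposition \ref{pro : automatic involutivity}. Part a) is then immediate: if each $L_\alpha$ is Dirac then $\Upsilon_{L_\alpha}\equiv0$, and if $L_\alpha\smile L_\beta$ then $L_\alpha\circledast L_\beta$ is involutive, so $\Theta_{\alpha\beta}=\langle[A_{\alpha\beta},B_{\alpha\beta}],C_{\alpha\beta}\rangle\equiv0$; hence $\langle[A,B],C\rangle$ vanishes on $U$, and by density identically.

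For part b) I would exploit the conformal behavior of the rescaling action. A direct computation gives $[\mathcal{R}_t a,\mathcal{R}_t b]=t\,\mathcal{R}_t[a,b]$ and $\langle\mathcal{R}_t a,\mathcal{R}_t b\rangle=t\langle a,b\rangle$, whence $\langle[\mathcal{R}_t a,\mathcal{R}_t b],\mathcal{R}_t c\rangle=t^2\langle[a,b],c\rangle$; in particular $\mathcal{R}_t$ preserves involutivity. The section correspondence $\sum_\alpha t_\alpha\tilde u_\alpha+\xi\leftrightarrow\sum_\alpha\tilde u_\alpha+\xi$, which is a bijection because the $t_\alpha$ are invertible, then turns the master formula for $\mathcal{R}_{t_1}(L_1)\circledast\cdots\circledast\mathcal{R}_{t_n}(L_n)$ into
\[\langle[A,B],C\rangle=\sum_\alpha t_\alpha^2\,\Upsilon_{L_\alpha}(\tilde a_\alpha,\tilde b_\alpha,\tilde c_\alpha)+\sum_{\alpha<\beta}t_\alpha t_\beta\,\Theta_{\alpha\beta}(\tilde A,\tilde B,\tilde C),\]
since every monomial in the pair-$(\alpha,\beta)$ contribution carries exactly one tangent datum from slot $\alpha$ and one from slot $\beta$, so is bihomogeneous of bidegree $(1,1)$ in $(t_\alpha,t_\beta)$.

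Finally I would deduce the equivalence from this homogeneity. Iterating Lemma \ref{lem : cotangent product} gives $\mathcal{R}_s(L_1\circledast\cdots\circledast L_n)=\mathcal{R}_s(L_1)\circledast\cdots\circledast\mathcal{R}_s(L_n)$ for a single scalar $s$, and since $\mathcal{R}_s$ rescales the Courant expression by $s^2$ it preserves involutivity; this normalizes the scalars up to a common factor. The residual content is that weighting each constituent of the master formula by its factor $t_\alpha t_\beta$ (respectively $t_\alpha^2$) does not change whether the total vanishes on all sections. This is transparent when $n=2$, where a single nonzero weight $t_1t_2$ multiplies the lone cross term $\Theta_{12}$ — the case underlying the generalized-complex applications — and I would establish that base case first. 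I expect the passage from the bidegree bookkeeping to the full equivalence for general $n$, where several cross terms carry distinct weights, to be the main obstacle, and the step where the homogeneity identity above must be used most carefully.
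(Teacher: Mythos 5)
Your treatment of part a) is correct and is essentially the paper's own argument: localize to the dense open set where the iterated form of Criterion \ref{circledast criterion} applies, write sections as $\sum_\alpha u_\alpha+\xi$, and split $\langle[A,B],C\rangle$ into diagonal and off-diagonal contributions. Your identity $\langle[A,B],C\rangle=\sum_\alpha\Upsilon_{L_\alpha}+\sum_{\alpha<\beta}\Theta_{\alpha\beta}$ is exactly the paper's $(2-n)\sum_i\Upsilon_{L_i}+\sum_{i<j}\Upsilon_{L_i\circledast L_j}$ after expanding $\Theta_{\alpha\beta}=\Upsilon_{L_\alpha\circledast L_\beta}-\Upsilon_{L_\alpha}-\Upsilon_{L_\beta}$, and the deduction of a) from it is the same.

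For part b) there is a genuine gap, and you have in effect flagged it yourself. Your weighted master formula $\sum_\alpha t_\alpha^2\Upsilon_{L_\alpha}+\sum_{\alpha<\beta}t_\alpha t_\beta\Theta_{\alpha\beta}$ is correct (each monomial of the Courant expression carries exactly two tangent slots), but the equivalence you want does not follow from homogeneity alone, and the difficulty is already present at $n=2$, not only for general $n$: the formula there is $t_1^2\Upsilon_{L_1}+t_2^2\Upsilon_{L_2}+t_1t_2\Theta_{12}$, so it is \emph{not} true that ``a single nonzero weight multiplies the lone cross term'' --- the diagonal terms carry the different weights $t_\alpha^2$, and the vanishing of the unweighted sum $\Upsilon_{L_1}+\Upsilon_{L_2}+\Theta_{12}$ formally says nothing about the weighted one (e.g.\ nothing rules out a configuration with $\Upsilon_{L_1}=-\Upsilon_{L_2}\neq 0$ and $\Theta_{12}=0$ at the level of this identity). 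The paper never attempts to pass from the unweighted to the weighted vanishing directly. Instead it bootstraps through part a): the sufficient condition of a) --- each $L_i$ Dirac and $L_i\smile L_j$ for all $i,j$ --- is itself the collection of $n=1$ and $n=2$ instances of the conclusion, and the weighted formula shows this condition is invariant under $L_i\mapsto\mathcal{R}_{t_i}(L_i)$ (the $n=1$ case gives $\Upsilon_{\mathcal{R}_t(L)}=t^2\Upsilon_L$, so Dirac-ness is preserved; the $n=2$ case, \emph{given} that the $\Upsilon_{L_i}$ vanish, gives $\Upsilon_{\mathcal{R}_{t_i}(L_i)\circledast\mathcal{R}_{t_j}(L_j)}=t_it_j\Upsilon_{L_i\circledast L_j}$, so weak concurrence is preserved). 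One then applies a) to the family $\{\mathcal{R}_{t_i}(L_i)\}$ for one implication and to $\{\mathcal{R}_{1/t_i}(\mathcal{R}_{t_i}(L_i))\}=\{L_i\}$ for the other. So the step you expect to be ``the main obstacle'' is not overcome by bidegree bookkeeping at all; it is sidestepped by making the rescaling act on the \emph{hypotheses} of a) rather than on the $n$-fold Courant expression. To complete your write-up you should either adopt that route or add the missing argument showing why, for the specific tensors arising here, the weighted sum vanishes identically whenever the unweighted one does.
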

\begin{proof}
 There exists an open, dense subset $U \subset M$, on which the cotangent product
 \begin{align*}
    R:=(L_{1} \circledast L_{2} \circledast \cdots \circledast L_{n})|_U
 \end{align*}is smooth, and sections $a \in \Gamma(R)$ are of the form
 \begin{align}\label{eq : form of sections}
  && a = \sum_{i=1}^n u_{i} + \xi, && \text{where} \ \  u_{1},...,u_{n} \in \mathfrak{X}(U) \ \ \text{and} \ \  \xi \in \Omega^1(U).
 \end{align}
 For such a section, we write $a_{i} \in \Gamma(L_{i}|_U)$ and $a_{ij} \in \Gamma\left(L_{i}|_U \circledast L_j|_U\right)$ for the sections
 \begin{align*}
  & a_{i}:=u_{i}+\xi, & a_{ij}:=u_{i}+u_{j}+\xi.
 \end{align*}
 \noindent \emph{Proof of 1.} Consider sections
 \begin{align*}
  && a = \sum_{i=1}^n u_{i} + \xi, && b = \sum_{i=1}^n v_{i} + \eta, && c = \sum_{i=1}^n w_{i} + \zeta
 \end{align*}of $R$ of the form \eqref{eq : form of sections}. Then
 \begin{align*}
  [a,b] = \sum_{i=1}^n \left[a_{i},b_{i}\right]+\sum_{i \neq j} \left[u_{i},v_{j}\right]
 \end{align*}
and so
 \begin{align*}
  \Upsilon_{R}(a,b,c) 
        & = \langle [a,b],c\rangle = \sum_i \langle \left[a_{i},b_{i}\right],c\rangle +\sum_{i \neq j} \langle \left[u_{i},v_{j}\right],c \rangle\\
        & = \sum_{i} \left\langle  \left[a_{i},b_{i}\right],c_{i} \right\rangle + \sum_{i \neq j}\left\langle  \left[a_{i},b_{i}\right],w_{j} \right\rangle + \sum_{i \neq j}\left\langle  \left[u_{i},v_{j}\right],\zeta \right\rangle\\
        & = \sum_{i}\Upsilon_{L_{i}}(a_{i},b_{i},c_{i}) + \sum_{i < j}\left( \langle \left[a_{i},b_{i}\right],w_{j} \rangle + \langle [u_i,v_j]+[u_j,v_i],\zeta \rangle + \langle \left[a_{j},b_{j}\right],w_{i} \rangle\right)\\
        & = (2-n)\sum_{i}\Upsilon_{L_{i}}(a_{i},b_{i},c_{i}) + \sum_{i < j}\left(\left\langle  \left[u_i+u_j+\xi,v_i+v_j+\eta\right],w_i+w_j+\zeta \right\rangle \right)\\
        & = (2-n)\sum_{i}\Upsilon_{L_{i}}(a_{i},b_{i},c_{i}) + \sum_{i<j}\Upsilon_{L_{i} \circledast L_{j}}(a_{ij},b_{ij},c_{ij}).
 \end{align*}
 Therefore
 \begin{align*}
  \Upsilon_{R}(a,b,c)  = (2-n)\sum_{i}\Upsilon_{L_{i}}(a_i,b_i,c_i) + \sum_{i<j}\Upsilon_{L_{i} \circledast L_{j}}(a_{ij},b_{ij},c_{ij})
 \end{align*}ensures that $R$ is Dirac on $U$ if all Lagrangian subbundles $L_i$ are Dirac, and any two such Dirac structures concur weakly.
 
 \noindent \emph{Proof of 2.} Let $t_1,...,t_n$ be non-zero real numbers. Then
 \begin{align*}
  && a' = \sum_{i=1}^n t_iu_i + \xi, && b' = \sum_{i=1}^n t_iv_i + \eta, && c' = \sum_{i=1}^n t_iw_i + \zeta
 \end{align*}are sections of $\mathcal{R}_{t_1}(L_1) \circledast \cdots \circledast \mathcal{R}_{t_n}(L_n)$ over $U$. Because
 \begin{align*}
  \Upsilon_{\mathcal{R}_{t_i}(L_i)}(a'_i,b'_i,c'_i) & = \left\langle \left[t_iu_i + \xi,t_iv_i + \eta \right],t_iw_i + \zeta \right\rangle\\
                                                                & = \left\langle \left[t_iu_i,t_iv_i \right],\zeta \right\rangle + \left\langle \left[t_iu_i,\eta \right]+[\xi,t_iv_i],t_iw_i\right\rangle\\
                                                                & = t_i^2\Upsilon_{L_{i}}(a_i,b_i,c_i)
 \end{align*}
and
 \begin{align*}
  \Upsilon_{\mathcal{R}_{t_i}(L_i) \circledast \mathcal{R}_{t_j}(L_j)}(a'_{ij},b'_{ij},c'_{ij}) & = \left\langle \left[t_iu_i+t_ju_j + \xi,t_iv_i+t_jv_j + \eta \right],t_iw_i +t_jw_j + \zeta \right\rangle\\
                                                                & = \left\langle \left[t_iu_i + \xi,t_iv_i + \eta \right],t_jw_j\right\rangle \\
                                                                & + \left\langle [t_iu_i,t_jv_j] + [t_ju_j,t_iv_i],\zeta\right\rangle\\
                                                                & + \left\langle \left[t_ju_j + \xi,t_jv_j + \eta \right],t_iw_i\right\rangle\\
                                                                & -\Upsilon_{L_i}(a_i',b_i',c_i') -\Upsilon_{L_j}(a_j',b_j',c_j')\\
                                                                & = t_it_j\left\langle \left[u_i+u_j + \xi,v_i+v_j + \eta \right],w_i +w_j + \zeta \right\rangle\\
                                                                & -t_i^2\Upsilon_{L_i}(a_i,b_i,c_i) -t_j^2\Upsilon_{L_j}(a_j,b_j,c_j)\\
                                                                & = t_it_j\Upsilon_{L_{i} \circledast L_{j}}(a_{ij},b_{ij},c_{ij})-t_i^2\Upsilon_{L_i}(a_i,b_i,c_i) -t_j^2\Upsilon_{L_j}(a_j,b_j,c_j)
 \end{align*}
 we deduce by the proof of item 1 that
 \begin{align*}
  \Upsilon_{\mathcal{R}_{t_1}(L_1) \circledast \cdots \circledast \mathcal{R}_{t_n}(L_n)}(a',b',c') 
        & = (2-n)\sum_{i}\Upsilon_{\mathcal{R}_{t_i}(L_i)}(a'_i,b'_i,c'_i) + \sum_{i<j}\Upsilon_{\mathcal{R}_{t_i}(L_i) \circledast \mathcal{R}_{t_j}(L_j)}(a'_{ij},b'_{ij},c'_{ij})\\
        & = (3-2n)\sum_{i}t_i^2\Upsilon_{L_{i}}(a_i,b_i,c_i) + \sum_{i<j}t_it_j\Upsilon_{L_{i} \circledast L_{j}}(a_{ij},b_{ij},c_{ij}) .
 \end{align*}
 Therefore if each $L_i$ is Dirac and any two $L_i,L_j$ concur weakly, then $\mathcal{R}_{t_1}(L_1) \circledast \cdots \circledast \mathcal{R}_{t_n}(L_n)$ is involutive. Since $n$ is arbitrary, this in particular implies that each $\mathcal{R}_{t_i}(L_i)$ is Dirac, and that any two $\mathcal{R}_{t_i}(L_i),\mathcal{R}_{t_j}(L_j)$ concur weakly, which by the discussion above implies in its turn that
 \begin{align*}
  L_1 \circledast \cdots \circledast L_n = \mathcal{R}_{\tfrac{1}{t_1}}(\mathcal{R}_{t_1}(L_1)) \circledast \cdots \circledast \mathcal{R}_{\tfrac{1}{t_n}}(\mathcal{R}_{t_n}(L_n))
 \end{align*}
is involutive. 
\end{proof}

\section{Comparison with Dirac pairs}\label{sec : Comparison with Dirac pairs}

A notion closely related to our notion of concurrence had already been proposed in the literature: that of \emph{Dirac pairs} (see \cite[Section 6]{Dorfman}, \cite[Section 3.6]{Dorfman_book} and \cite[Definition 3.1]{Kosmann}). We briefly recall its setting here. Recall that a \emph{relation} between sets $X$ and $Y$ is a subset $L \subset X \times Y$. The \emph{inverse} relation is
\begin{align*}
 && L^{-1} \subset Y \times X, && L^{-1}:=\{(y,x) \ | \ (x,y) \in L\}.
\end{align*}The \emph{composition} of relations
\begin{align*}
 && L \subset X \times Y, && R \subset Y \times Z
\end{align*}is the relation
\begin{align*}
 && L \circ R \subset X \times Z, && L \circ R = \{(x,z) \ | \ \exists \ y \in Y \ \text{such that} \ (x,y) \in L, \ (y,z) \in R\}.
\end{align*}If $X$ and $Y$ are vector spaces, and $L \subset X \times Y$ is a relation, we refer to
\begin{align*}
 && L^{\vee} \subset X^* \times Y^*, && L^{\vee}:= \{(\xi,\eta) \ | \ (x,y) \in L \ \text{implies} \ \xi(x)=\eta(y)\}
\end{align*}as the relation \emph{dual} to $L$.

\begin{lemma}
 For all Lagrangian families $L,R \subset \mathbb{T} M$, define
 \begin{align*}
  & L \diamondsuit_T R:=L \circ R^{-1}, & L \diamondsuit_{T^*} R:=L^{-1} \circ R.
 \end{align*}
Then we have that
 \begin{align*}
  (L \diamondsuit_T R)^{\vee}=L \diamondsuit_{T^*} R.
 \end{align*}
\end{lemma}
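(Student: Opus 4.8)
The plan is to reduce the statement to fibrewise linear algebra in $V := T_xM$ (with $n = \dim V = \dim M$), since the compositions, inverses and duals are all defined pointwise, and then to establish the equality by one easy inclusion together with a dimension count. First I would unwind the two compositions explicitly. Viewing a Lagrangian family as the relation $\{(u,\xi) : u+\xi \in L\}$, one computes
\begin{align*}
 & L \diamondsuit_T R = \{(u,u') \in V \times V \mid \exists\, \xi \in V^*,\ u + \xi \in L,\ u' + \xi \in R\},\\
 & L \diamondsuit_{T^*} R = \{(\xi,\xi') \in V^* \times V^* \mid \exists\, u \in V,\ u + \xi \in L,\ u + \xi' \in R\}.
\end{align*}
Since $L \diamondsuit_T R \subset V \times V$, its dual $(L \diamondsuit_T R)^{\vee}$ lies in $V^* \times V^*$, the very space containing $L \diamondsuit_{T^*} R$; so the claim is an equality of subspaces of $V^* \times V^*$, to be verified at each point.

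For the inclusion $L \diamondsuit_{T^*} R \subseteq (L \diamondsuit_T R)^{\vee}$ I would use only isotropy. Given $(\xi,\xi') \in L \diamondsuit_{T^*} R$, choose $u_0$ with $u_0 + \xi \in L$ and $u_0 + \xi' \in R$; given $(u,u') \in L \diamondsuit_T R$, choose $\eta$ with $u + \eta \in L$ and $u' + \eta \in R$. Isotropy of $L$ gives $\langle u_0 + \xi, u + \eta\rangle = \xi(u) + \eta(u_0) = 0$, while isotropy of $R$ gives $\langle u_0 + \xi', u' + \eta\rangle = \xi'(u') + \eta(u_0) = 0$; subtracting yields $\xi(u) = \xi'(u')$, which is precisely the defining condition for $(\xi,\xi') \in (L \diamondsuit_T R)^{\vee}$.

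The reverse inclusion I would obtain by matching dimensions, and this is where the full Lagrangian (not merely isotropic) hypothesis enters. For any subspace $S \subset V \times V$ one has $\dim S^{\vee} = 2n - \dim S$, since $(\xi,\eta) \mapsto (\xi,-\eta)$ identifies $S^{\vee}$ with the annihilator $S^{\circ} \subset V^* \times V^*$; hence it suffices to prove $\dim(L \diamondsuit_T R) + \dim(L \diamondsuit_{T^*} R) = 2n$. To compute $\dim(L \diamondsuit_T R)$ I would realize it as the image of $\ker \delta_{T^*}$, with $\delta_{T^*}$ the map of Criterion \ref{circledast criterion}, under $(a,b) \mapsto (\mathrm{pr}_T a, \mathrm{pr}_T b)$. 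Two standard Lagrangian facts drive the count: $\mathrm{pr}_{T^*}(L) = (L \cap V)^{\circ}$ (and symmetrically $\mathrm{pr}_T(L) = (L \cap V^*)^{\circ}$), together with $A^{\circ} + B^{\circ} = (A \cap B)^{\circ}$. These give $\dim \operatorname{im}\delta_{T^*} = \dim(\mathrm{pr}_{T^*}L + \mathrm{pr}_{T^*}R) = n - \dim(L \cap R \cap V)$, and then, after accounting for the kernel of the projection, $\dim(L \diamondsuit_T R) = n + \dim(L \cap R \cap V) - \dim(L \cap R \cap V^*)$. The symmetric computation yields $\dim(L \diamondsuit_{T^*} R) = n + \dim(L \cap R \cap V^*) - \dim(L \cap R \cap V)$, so the two sum to $2n$; combined with the isotropy inclusion this forces equality.

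I expect the only mildly delicate point to be the bookkeeping in the dimension count: identifying the kernel of $\ker \delta_{T^*} \to V \times V$ canonically with $L \cap R \cap V^*$, and correctly applying $\mathrm{pr}_{T^*}(L) = (L \cap V)^{\circ}$ to convert projections into annihilators. Everything else is routine, and a quick check on $L = R = \mathrm{Gr}(\omega)$ for a symplectic $\omega$ (where both sides reduce to the diagonal of $V^* \times V^*$) confirms the normalization.
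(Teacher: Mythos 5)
Your proof is correct, and the two halves split as follows: the inclusion $L \diamondsuit_{T^*} R \subseteq (L \diamondsuit_T R)^{\vee}$ via isotropy of $L$ and $R$ is essentially identical to the paper's argument, but your treatment of the reverse inclusion genuinely differs. The paper fixes a point, writes $L_x=\mathcal{R}_{\pi_L}(F_L\oplus F_L^{\circ})$ and $R_x=\mathcal{R}_{\pi_R}(F_R \oplus F_R^{\circ})$, computes $(L \diamondsuit_{T} R)_x$ explicitly in these terms, and then, for each $(\xi,\eta) \in (L \diamondsuit_{T} R)^{\vee}_x$, constructs by hand a common vector $z$ with $z+\xi \in L_x$ and $z+\eta \in R_x$. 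You instead bypass any normal form: you use $\dim S^{\vee} = 2n - \dim S$ together with the count
\begin{align*}
 \dim(L \diamondsuit_T R) = n + \dim(L \cap R \cap TM) - \dim(L \cap R \cap T^*M)
\end{align*}
and its mirror image, whose sum is $2n$, so the easy inclusion is forced to be an equality. I checked the bookkeeping you flag as delicate --- the identification of the kernel of $\ker\delta_{T^*} \to TM \times TM$ with $L \cap R \cap T^*M$, and the use of $\mathrm{pr}_{T^*}(L)=(L\cap TM)^{\circ}$ --- and it is right. What the paper's route buys is an explicit description of $(L\diamondsuit_T R)^{\vee}$ as the set of pairs $(\xi,\eta) \in F_L^{\circ}\times F_R^{\circ}$ with $\pi_L^{\sharp}(\xi)-\pi_R^{\sharp}(\eta)\in F_L+F_R$, which is reusable elsewhere; what your route buys is independence from the choice of presentation of the Lagrangian fibres and a shorter, purely numerical argument. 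Both are sound fibrewise linear algebra.
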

\begin{proof}
First observe that, by definition,
\begin{align*}
 L \diamondsuit_T R & = \{(u,v) \in TM \oplus TM \ | \ \exists \ \xi \in T^*M, \ (u+\xi,v+\xi) \in L \oplus R\}\\
 L \diamondsuit_{T^*} R & = \{(\xi,\eta) \in T^*M \oplus T^*M \ | \ \exists \ w \in TM, \ (w+\xi,w+\eta) \in L \oplus R\}\\
 (L \diamondsuit_T R)^{\vee} & = \{(\xi,\eta) \in T^*M \oplus T^*M \ | \ (u,v) \in L \diamondsuit_T R \ \Rightarrow \ \langle \xi,u\rangle = \langle \eta,v\rangle\}.
\end{align*}
We show that the latter two coincide. Let $(\xi,\eta) \in L \diamondsuit_{T^*} R$ and let $(u,v) \in L \diamondsuit_{T} R$. Then there are $w \in TM$ and $\zeta \in T^*M$ for which
\begin{align*}
 && w+\xi \in L, && u+\zeta \in L, && w+\eta \in R, && v+\zeta \in R.
\end{align*}
Because $L$ and $R$ are Lagrangian, we deduce that
\begin{align*}
 \langle \xi,u\rangle = -\langle \zeta,w\rangle = \langle \eta,v\rangle.
\end{align*}This implies that
\begin{align*}
 L \diamondsuit_{T^*} R \subset (L \diamondsuit_{T} R)^{\vee}.
\end{align*}
To prove the converse, fix $x \in M$, and write $L_x,R_x$ as
\begin{align*}
 & L_x=\mathcal{R}_{\pi_L}(F_L\oplus F_L^{\circ}), & R_x=\mathcal{R}_{\pi_R}(F_R \oplus F_R^{\circ}),
\end{align*}where $F_L,F_R \subset T_xM$ are vector subspaces and $\pi_L,\pi_R \in \wedge^2T_xM$ are bivectors. 
Because
\begin{align*}
 (L \diamondsuit_{T} R)_x = F_L \times F_R + (\pi_L^{\sharp},\pi_R^{\sharp})(F_L^{\circ} \cap F_R^{\circ}),
\end{align*}
we see that $(L \diamondsuit_{T} R)^{\vee}_x$ consists of those pairs $(\xi,\eta)$, where
\begin{align*}
 && \xi \in F_L^{\circ}, && \eta \in F_R^{\circ}, && \pi_L^{\sharp}(\xi)-\pi_R^{\sharp}(\eta) \in F_L+F_R.
\end{align*}
Therefore $(\xi,\eta) \in F_L^{\circ} \times F_R^{\circ}$ lies in $(L \diamondsuit_{T} R)^{\vee}_x$ if and only if there exist $(w_L,w_R) \in F_L \times F_R$, such that
\begin{align*}
 w_L+\pi_L^{\sharp}(\xi) = w_R+\pi_R^{\sharp}(\eta).
\end{align*}Call this common vector $z$, and observe that
\begin{align*}
 & z + \xi \in L_x, & z + \eta \in R_x,
\end{align*}and therefore $z + \xi  + \eta \in (L \star R)_x$, which is to say that $(\xi,\eta) \in (L \diamondsuit_{T^*} R)_x$. Thus
\begin{align*}
 (L \diamondsuit_{T^*} R)_x \supset (L \diamondsuit_{T} R)^{\vee}_x,
\end{align*}and so equality follows.
\end{proof}

Given Lagrangian subbundles $L,R \subset \mathbb{T}M$, consider the space $\mathscr{P}(L,R) \subset \Gamma(\mathbb{T}M)^7$ consisting of all $7$-tuples of sections $(u_L,u_R,v_L,v_R,\zeta_R,\zeta_{LR},\zeta_L)$, where
\begin{align*}
 && (u_L,u_R), (v_L,v_R) \in L \diamondsuit_{T} R, && (\zeta_{LR},\zeta_R), (\zeta_L,\zeta_{LR}) \in L \diamondsuit_{T^*} R.
\end{align*}Note that $u_L,u_R,v_L,v_R \in \mathfrak{X}(M)$, while $\zeta_L,\zeta_{LR},\zeta_R \in \Omega^1(M)$.

\begin{definition}\label{def : Dirac pairs}
 The {\bf torsion} of a pair of Lagrangian subbundles $L,R \subset \mathbb{T}M$ is the function $\mathfrak{y}_{L,R} : \mathscr{P}(L,R) \to C^{\infty}(M)$ given by
\begin{align}\label{eq : torsion of a pair}
\mathfrak{y}_{L,R} \left(u_L,u_R,v_L,v_R,\zeta_L,\zeta_{LR},\zeta_R\right) = \langle [u_L,v_L], \zeta_L \rangle - \langle [u_L,v_R]+[u_R,v_L], \zeta_{LR}\rangle + \langle [u_R,v_R],\zeta_R \rangle.
\end{align}
Two Dirac structures $(L,R)$ form a {\bf Dirac pair} if their torsion $\mathfrak{y}_{L,R}$ vanishes identically.
\end{definition}

For example, two \emph{symplectic} Poisson structures $\pi_0$ and $\pi_1$, $\mathrm{Gr}(\pi_0)$ and $\mathrm{Gr}(\pi_1)$ form a Dirac pair (\cite[Theorem 3.2]{Kosmann} exactly when $\pi_0$ and $\pi_1$ commute. In fact, the notion of concurrence of Dirac structures is a refinement of the notion of Dirac pairs, in the following sense:

\begin{theorem}\label{thm : concurring Dirac structures form a Dirac pair}
 Weakly concurring Dirac structures form a Dirac pair.
\end{theorem}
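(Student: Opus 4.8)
The plan is to exhibit the torsion $\mathfrak{y}_{L,R}$ as a fixed signed sum of Courant tensors of $L$, of $R$, and of $L \circledast R$, each of which vanishes under the hypotheses, and then to pass from this identity to the conclusion by density. Since the entries of any tuple in $\mathscr{P}(L,R)$ are smooth sections, the torsion is a smooth function on $M$, and the assertion is local; so I would first restrict to the largest open set $U \subseteq M$ on which Criterion \ref{circledast criterion} applies to $L|_U$ and $R|_U$ (so that $L \circledast R$ is smooth there, and $\Upsilon_{L \circledast R}$ vanishes by weak concurrence), shrinking $U$ further so that Criterion \ref{star criterion} applies as well and the bundle maps defining $L \diamondsuit_T R$ and $L \diamondsuit_{T^*} R$ have locally constant rank. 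Each of these is a dense open condition, so $U$ stays dense, and it suffices to prove $\mathfrak{y}_{L,R} \equiv 0$ on $U$.

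Given a $7$-tuple $(u_L,u_R,v_L,v_R,\zeta_L,\zeta_{LR},\zeta_R) \in \mathscr{P}(L,R)$, I would choose, locally on $U$, smooth witnesses for its defining memberships: one-forms $\alpha,\beta$ with $u_L+\alpha,\,v_L+\beta \in \Gamma(L)$ and $u_R+\alpha,\,v_R+\beta \in \Gamma(R)$, and vector fields $p,q$ with $p+\zeta_L \in \Gamma(L)$, $p+\zeta_{LR}\in \Gamma(R)$, $q+\zeta_{LR}\in \Gamma(L)$, $q+\zeta_R\in \Gamma(R)$. The point of having a single $p$ (resp.\ $q$) is that the data then assemble into three smooth sections of $L \circledast R$, namely $A:=u_L+u_R+\alpha$, $B:=v_L+v_R+\beta$ and $C:=p+q+\zeta_{LR}$, where membership of $C$ uses exactly that $q+\zeta_{LR}\in L$ and $p+\zeta_{LR}\in R$.

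Writing $\gamma_L := \mathscr{L}_{u_L}\beta - \iota_{v_L}\mathrm{d}\alpha$ and $\gamma_R := \mathscr{L}_{u_R}\beta - \iota_{v_R}\mathrm{d}\alpha$, a direct expansion of the Dorfman bracket yields
\begin{align*}
 \Upsilon_L(u_L+\alpha,\,v_L+\beta,\,p+\zeta_L) &= \iota_p\gamma_L + \langle [u_L,v_L],\zeta_L\rangle, \\
 \Upsilon_R(u_R+\alpha,\,v_R+\beta,\,q+\zeta_R) &= \iota_q\gamma_R + \langle [u_R,v_R],\zeta_R\rangle, \\
 \Upsilon_L(u_L+\alpha,\,v_L+\beta,\,q+\zeta_{LR}) &= \iota_q\gamma_L + \langle [u_L,v_L],\zeta_{LR}\rangle, \\
 \Upsilon_R(u_R+\alpha,\,v_R+\beta,\,p+\zeta_{LR}) &= \iota_p\gamma_R + \langle [u_R,v_R],\zeta_{LR}\rangle, \\
 \Upsilon_{L \circledast R}(A,B,C) &= \iota_{p+q}(\gamma_L+\gamma_R) + \langle [u_L+u_R,\,v_L+v_R],\zeta_{LR}\rangle.
\end{align*}
Adding the first four lines and subtracting the fifth, every term involving $\iota_p$ or $\iota_q$ cancels, and the cotangent pairings recombine precisely into $\mathfrak{y}_{L,R}$; that is, $\mathfrak{y}_{L,R}$ equals the alternating sum of these five Courant tensors. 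The first four vanish because $L$ and $R$ are Dirac, and the last vanishes on $U$ by weak concurrence, so $\mathfrak{y}_{L,R}$ vanishes on $U$ and hence, by continuity, on all of $M$.

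The main obstacle is the clause ``choose smooth witnesses'': the witnesses exist pointwise by the definitions of $\diamondsuit_T$ and $\diamondsuit_{T^*}$, but upgrading them to smooth local sections requires the relevant images and kernels to be vector subbundles, which is exactly what the constant-rank locus $U$ provides --- the tangent witnesses $\alpha,\beta$ come from the description of $\Gamma(L \circledast R)$ in Criterion \ref{circledast criterion}, and the cotangent witnesses $p,q$ from that of $\Gamma(L \star R)$ in Criterion \ref{star criterion}. A useful consistency check is that the alternating sum above is manifestly witness-independent, since its value $\mathfrak{y}_{L,R}$ is; thus any smooth choice afforded by $U$ suffices, and no canonical selection is needed.
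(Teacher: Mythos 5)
Your proof is correct and follows essentially the same route as the paper's: your five-term identity expressing $\mathfrak{y}_{L,R}$ as a signed sum of Courant tensors is, term by term, the same computation the paper performs by expanding $\langle[a_{\circledast},b_{\circledast}],c_{\circledast}\rangle$ and using the isotropy and involutivity of $L$ and $R$ together with the involutivity of $L\circledast R$. The one point where you go beyond the paper is the care taken over the smoothness of the witnesses $\alpha,\beta,p,q$ --- the paper simply asserts that smooth global witnesses exist --- and your restriction to the dense locally-constant-rank locus, followed by continuity of $\mathfrak{y}_{L,R}$, is a legitimate way to close that gap.
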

\begin{proof}
Let $L$ and $R$ be Dirac structures on $M$ for which $L \circledast R$ is involutive. Choose a section
\begin{align*}
 (u_L,u_R,v_L,v_R,\zeta_R,\zeta_{LR},\zeta_L) \in \mathscr{P}(L,R).
\end{align*}Concretely, this means that there exist $w,z \in \mathfrak{X}(M)$ and $\xi,\eta \in \Omega^1(M)$, such that
\begin{align*}
  && a_{L}=u_L+\xi, && b_{L}=v_L+\eta, && w+\zeta_{LR}, && z+\zeta_{L} && \text{belong to} \ \Gamma(L)\\
  && a_{R}=u_R+\xi, && b_{R}=v_R+\eta, && w+\zeta_{R}, && z+\zeta_{LR} && \text{belong to} \ \Gamma(R).
\end{align*}which implies that all of
\begin{align*}
 a_{\circledast}:=u_L+u_R+\xi, \qquad\qquad b_{\circledast}:=v_L+v_R+\eta, \qquad\qquad c_{\circledast}:=w+z+\zeta_{LR}
\end{align*}lie in $L \circledast R$. Then observe that:
\begin{align*}
 [a_{\circledast},b_{\circledast}] = [a_{L},b_{L}] + [a_{R},b_{R}] + [u_L,v_R]+[u_R,v_L],
\end{align*}and therefore:
\begin{align*}
 0 & \stackrel{(1)}{=} \langle [a_{\circledast},b_{\circledast}],c_{\circledast}\rangle \\ & = \langle [a_{L},b_{L}] + [a_{R}, b_{R}] + [u_L,v_R]+[u_R,v_L],c_{\circledast}\rangle =\\
 & \stackrel{(2)}{=}\langle [a_{L},b_{L}], z \rangle + \langle [a_{R}, b_{R}], w \rangle +\langle [u_L,v_R]+[u_R,v_L],\zeta_{LR} \rangle = \\
 & \stackrel{(3)}{=}-\langle [u_L,v_L], \zeta_L \rangle - \langle [u_R,v_R],\zeta_R \rangle +\langle [u_L,v_R]+[u_R,v_L], \zeta_{LR} \rangle =\\
 & \stackrel{(4)}{=} -\mathfrak{y}_{L,R}\left(u_L,u_R,v_L,v_R,\zeta_R,\zeta_{LR},\zeta_L\right),
\end{align*}where (1) is by assumption that $L \circledast R$ is involutive, (2) is because
\begin{align*}
 & [a_{L},b_{L}], \quad z+\zeta_{LR} \ \in \ \Gamma(L), & [a_{R},b_{R}], \quad w+\zeta_{LR} \ \in \ \Gamma(R),
\end{align*}(3) is because
\begin{align*}
 & [a_{L},b_{L}], \quad w+\zeta_L \ \in \ \Gamma(L), & [a_{R},b_{R}], \quad w+\zeta_R \ \in \ R,
\end{align*}and (4) is by definition of torsion $\mathfrak{y}_{L,R}$. This shows that $(L,R)$ is a Dirac pair.
\end{proof}

Dirac pairs need not concur weakly, however:

\begin{example}\normalfont
\emph{An example of a Dirac pair $(L,R)$ which does not concur weakly.} On $M=\mathbb{R}^4$ consider the Poisson structures
\begin{align*}
 & \pi_L = \tfrac{\partial}{\partial x_1} \wedge \tfrac{\partial}{\partial x_2}, & \pi_R = x_1\tfrac{\partial}{\partial x_3} \wedge \tfrac{\partial}{\partial x_4},
\end{align*}and let $L,R$ be the corresponding Dirac structures. Then note that
\begin{align*}
 L \diamondsuit_{T^*}R = \{ (\xi,\eta) \ | \ \pi_L^{\sharp}(\xi)=\pi_R^{\sharp}(\eta) \}
\end{align*}is the trivial bundle. This implies by inspection of \eqref{eq : torsion of a pair} that the torsion of $(L,R)$ vanishes identically; that is, $(L,R)$ form a Dirac pair. However, $L$ and $R$ do not concur weakly, since $\pi_L$ and $\pi_R$ do not commute:
\begin{align*}
 [\pi_L,\pi_R] = \tfrac{\partial}{\partial x_2} \wedge \tfrac{\partial}{\partial x_3} \wedge \tfrac{\partial}{\partial x_4}.
\end{align*}
\end{example}

\begin{remark}\normalfont
 It is interesting to highlight that the notion of concurrence involves an altogether different algebraic space from that of Dirac pairs. The example above illustrates our advocacy for (weak) concurrence as the more adequate (even mandated) notion of compatibility between Dirac structures.
\end{remark}

\section{Transverse Dirac structures}\label{sec : Transverse Dirac structures}

The case in which concurrence between Dirac structures $L$ and $R$ has its cleanest interpretation is when these structures are transverse:

\begin{lemma}\label{lem : equivalent conditions for transversality revisited}
For Lagrangian subbundles $L,R \subset \mathbb{T} M$, the following conditions are equivalent:
   \begin{enumerate}[i)]
  \item $L$ and $R$ are transverse: $\mathbb{T} M = L \oplus R$;
  \item $L \star \mathcal{R}_{-1}(R)$ is the graph of a smooth bivector and $L \circledast \mathcal{R}_{-1}(R)$ is the graph of a smooth two-form.
 \end{enumerate}
\end{lemma}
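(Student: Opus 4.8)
The plan is to convert both clauses of (ii) into fibrewise statements about the intersection $L \cap R$ and then read off transversality from a dimension count. The computational heart of the argument is the pair of identities
\begin{align*}
 (L \star \mathcal{R}_{-1}(R)) \cap TM &= \mathrm{pr}_T(L \cap R), & (L \circledast \mathcal{R}_{-1}(R)) \cap T^*M &= \mathrm{pr}_{T^*}(L \cap R),
\end{align*}
which I would establish fibrewise from the algebraic descriptions of $\star$ and $\circledast$ (the latter being \eqref{eq : explicit definition of circledast}), keeping in mind that $\mathcal{R}_{-1}$ negates tangent parts but fixes cotangent parts. Concretely, an element of $L \star \mathcal{R}_{-1}(R)$ has the form $u + \xi + \eta$ with $u + \xi \in L$ and $-u + \eta \in R$; it is purely tangent precisely when $\eta = -\xi$, which forces $u + \xi \in L \cap R$ and exhibits it as $\mathrm{pr}_T(u+\xi)$. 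Dually, an element of $L \circledast \mathcal{R}_{-1}(R)$ has the form $u + w + \xi$ with $u + \xi \in L$ and $-w + \xi \in R$, and is purely cotangent exactly when $w = -u$, again forcing $u + \xi \in L \cap R$.

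Next I would invoke the standard fact that a Lagrangian subbundle is the graph of a smooth bivector iff it meets $TM$ trivially in each fibre, and the graph of a smooth two-form iff it meets $T^*M$ trivially. Granting (ii), the two identities then give $\mathrm{pr}_T(L \cap R) = 0$ and $\mathrm{pr}_{T^*}(L \cap R) = 0$ fibrewise, hence $L \cap R \subset T^*M$ and $L \cap R \subset TM$, so that $L \cap R = 0$. Since $\mathrm{rk}(L) = \mathrm{rk}(R) = \dim M = \tfrac{1}{2}\mathrm{rk}(\mathbb{T}M)$, a rank count upgrades $L \cap R = 0$ to $\mathbb{T}M = L \oplus R$, which is (i). For the converse, transversality supplies the missing smoothness: applying the splitting $\mathbb{T}M = L \oplus R$ to an arbitrary tangent vector shows $\mathrm{pr}_T(L) + \mathrm{pr}_T(R) = TM$, and dually $\mathrm{pr}_{T^*}(L) + \mathrm{pr}_{T^*}(R) = T^*M$. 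These say exactly that the bundle maps $\delta_T$ and $\delta_{T^*}$ of Criterion \ref{star criterion} and Criterion \ref{circledast criterion} are surjective, hence of locally constant (indeed full) rank; the criteria then make $L \star \mathcal{R}_{-1}(R)$ and $L \circledast \mathcal{R}_{-1}(R)$ smooth Lagrangian subbundles, and the intersection identities identify them as the desired graphs.

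The routine bookkeeping is the identification of the two intersections, which is pure linear algebra carried out fibrewise. The step I expect to require the most care is the smoothness half of (i) $\Rightarrow$ (ii): one must be sure that transversality genuinely produces the locally constant rank hypothesis of the Criteria, and this is the only place where transversality is used globally rather than as a pointwise condition --- unlike the reverse implication, where only the fibrewise vanishing of $L \cap R$ is needed. I would also double-check that $\mathcal{R}_{-1}$ is handled consistently throughout, since a sign error in how it acts on tangent versus cotangent parts would swap the roles of the two products.
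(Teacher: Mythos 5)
Your proposal is correct and follows essentially the same route as the paper: establish the identities $(L \star \mathcal{R}_{-1}(R)) \cap TM = \mathrm{pr}_T(L \cap R)$ and $(L \circledast \mathcal{R}_{-1}(R)) \cap T^*M = \mathrm{pr}_{T^*}(L \cap R)$, deduce $L \cap R = 0$ from (ii) and upgrade to transversality by a rank count, and conversely use transversality to feed the surjectivity of $\delta_T$ and $\delta_{T^*}$ into Criteria \ref{star criterion} and \ref{circledast criterion}. The only difference is that you spell out the fibrewise verification of the two intersection identities, which the paper simply asserts.
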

\begin{proof}
First observe that
\begin{align*}
 & \left(L \star \mathcal{R}_{-1}(R)\right) \cap TM = \mathrm{pr}_T(L \cap R), & \left(L \circledast \mathcal{R}_{-1}(R)\right) \cap T^*M = \mathrm{pr}_{T^*}(L \cap R).
\end{align*}
Hence if $L \star \mathcal{R}_{-1}(R)$ is the graph of a smooth bivector, and $L \circledast \mathcal{R}_{-1}(R)$ is the graph of a smooth two-form, it follows that $L \cap R = 0$, and therefore $L$ and $R$ are transverse. Conversely, if $L$ and $R$ are transverse, then
\begin{align*}
 && \mathrm{pr}_T(L)+\mathrm{pr}_T(R)=TM, && \mathrm{pr}_{T^*}(L)+\mathrm{pr}_{T^*}(R)=T^*M.
\end{align*}
By Criteria \ref{star criterion} and \ref{circledast criterion}, we deduce that $L \star \mathcal{R}_{-1}(R)$ and $L \circledast \mathcal{R}_{-1}(R)$ are Lagrangian subbundles, while
\begin{align*}
 && \left(L \star \mathcal{R}_{-1}(R)\right) \cap TM = 0 && \left(L \circledast \mathcal{R}_{-1}(R)\right) \cap T^*M = 0
\end{align*}
imply that $L \star \mathcal{R}_{-1}(R)$ is the graph of a smooth bivector, and $L \circledast \mathcal{R}_{-1}(R)$ is the graph of a smooth two-form.
\end{proof}

\begin{remark}
 Note in particular that, in the context of Lemma \ref{lem : equivalent conditions for transversality revisited}, leaves of $L$ meet leaves of $R$ \emph{transversally}; in particular the dimension of the leaves of $L \star R$ is completely determined by the dimensions of the leaves of $L$ and of $R$.
\end{remark}

One of the important examples of transverse Dirac structures --- and which helped bring attention to Dirac methods in Poisson geometry --- is that of \emph{coupling Dirac structures} \cite{Brahic_Fernandes,Vaisman,Vorobjev}:

\begin{example}[Coupling Dirac structures]\label{ex : coupling}\normalfont
 A Lagrangian subbundle $L \subset \mathbb{T}M$ is {\bf coupling} for a foliation $\mathscr{F}$ on $M$ if $L$ and $\mathrm{Gr}(\mathscr{F})$ are transverse Dirac structures. By Lemma \ref{lem : equivalent conditions for transversality revisited}, this is equivalent to the equalities
 \begin{align*}
  & L \star \mathrm{Gr}(\mathscr{F}) = \mathrm{Gr}(\pi),  & L \circledast \mathrm{Gr}(\mathscr{F}) = \mathrm{Gr}(\omega), 
 \end{align*}where $\pi$ is a vertical Poisson bivector and $\omega$ is a horizontal two-form; in fact,
 \begin{align*}
 & L \star \mathrm{Gr}(\mathscr{F}) = L \cap (N^*\mathscr{F})^{\perp} + N^*\mathscr{F}, & L \circledast \mathrm{Gr}(\mathscr{F}) = L \cap (T\mathscr{F})^{\perp} + T\mathscr{F}.
\end{align*}
Using transversality of $L$, we can write
 \begin{align*}
  & \mathrm{Gr}(\pi) = D \oplus N^*\mathscr{F},  & \mathrm{Gr}(\omega) = C \oplus T\mathscr{F},
 \end{align*}
 where
 \begin{align*}
  & D = L \cap (N^*\mathscr{F})^{\perp},  & C = L \cap (T\mathscr{F})^{\perp},
 \end{align*}
 from which it also follows that
 \begin{align*}
  L = D \oplus C.
 \end{align*}
Note also that the Lagrangian subbundle
 \begin{align*}
  R :=\mathcal{R}_{-\omega}\mathcal{R}_{-\pi}(L) = \mathcal{R}_{-\pi}(D) \oplus \mathcal{R}_{-\omega}(C)
 \end{align*}is such that
 \begin{align*}
  & R \cap TM = \mathcal{R}_{-\omega}(C),  & R \cap T^*M = \mathcal{R}_{-\pi}(D);
 \end{align*}
 therefore
 \begin{align*}
  R = (R \cap TM) \oplus (R \cap T^*M) = \mathrm{Gr}(H)
 \end{align*}is the graph of an Ehresmann connection for $\mathscr{F}$ --- that is, a vector subbundle $H \subset TM$, for which $TM = T\mathscr{F} \oplus H$. In this coupling case, observe that $L$ is completely determined by its Dirac products with $\mathrm{Gr}(\mathscr{F})$:
 \begin{align*}
  L = \mathcal{R}_{\pi}(H^{\circ}) \oplus \mathcal{R}_{\omega}(H).
 \end{align*}
\end{example}

In the sequel, we will discuss another prominent case of Dirac products of transverse Dirac structures --- that of generalized complex structures in Section \ref{sec : Generalized complex structures}. For the moment, let us illustrate the usefulness of this framework with following example of \cite[Corollary 6.7]{Liu_Weinstein_Xu}, which we clarify by recognizing it as an instance of Dirac products (see also \cite{Marcut}):

\begin{proposition}[Transverse Poisson structures]\label{pro : transverse Poisson}
 Let $\pi_L,\pi_R$ be two Poisson structures on a manifold $M$. Then $\mathrm{Gr}(\pi_L)$ and $\mathrm{Gr}(\pi_R)$ are transverse Dirac structures exactly when $\pi_{R}-\pi_{L}$ is the inverse of a nondegenerate two-form $\omega \in \Omega^2(M)$,
 \begin{align*}
  \pi_{R}^{\sharp}-\pi_{L}^{\sharp} = (\omega^{\sharp})^{-1},
 \end{align*}
 in which case
 \begin{align*}
  \mathrm{Gr}(\pi_L) \star \mathrm{Gr}(-\pi_R) = \mathrm{Gr}(\pi_{LR}),
 \end{align*}where $\pi_{LR}$ is the Poisson structure given by
 \begin{align*}
  \pi_{LR}^{\sharp} = \pi_{L}^{\sharp}\omega^{\sharp}\pi_{R}^{\sharp} = \pi_{R}^{\sharp}\omega^{\sharp}\pi_{L}^{\sharp}.
 \end{align*}
\end{proposition}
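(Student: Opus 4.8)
The plan is to separate the two assertions: first characterize transversality, then identify the resulting bivector. For the first part, I would observe that an element of $\mathrm{Gr}(\pi_L) \cap \mathrm{Gr}(\pi_R)$ must have equal cotangent components, say $\xi$, so that its tangent component satisfies $\pi_L^\sharp(\xi) = \pi_R^\sharp(\xi)$; hence the intersection is trivial exactly when $\pi_R^\sharp - \pi_L^\sharp : T^*M \to TM$ is injective. Since both graphs are Lagrangian subbundles of rank $\dim M$ inside $\mathbb{T} M$, triviality of the intersection is equivalent to transversality, which therefore holds precisely when $\pi_R^\sharp - \pi_L^\sharp$ is a fibrewise isomorphism. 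In that case its inverse $TM \to T^*M$ is again skew-symmetric (the inverse of a skew isomorphism is skew), hence of the form $\omega^\sharp$ for a nondegenerate two-form $\omega$, giving $\pi_R^\sharp - \pi_L^\sharp = (\omega^\sharp)^{-1}$; the converse is immediate.

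For the second part, I would first note that $\mathrm{Gr}(-\pi_R) = \mathcal{R}_{-1}(\mathrm{Gr}(\pi_R))$, so that $\mathrm{Gr}(\pi_L) \star \mathrm{Gr}(-\pi_R)$ is exactly $L \star \mathcal{R}_{-1}(R)$ in the notation of Lemma \ref{lem : equivalent conditions for transversality revisited}. That lemma, together with the transversality established above, guarantees that this tangent product is the graph of a smooth bivector, and Proposition \ref{pro : automatic involutivity} ensures it is Dirac --- hence the bivector is Poisson. It only remains to identify it explicitly.

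To do so I would apply the algebraic description of the tangent product: its elements arise from $a = \pi_L^\sharp(\xi)+\xi \in \mathrm{Gr}(\pi_L)$ and $b = -\pi_R^\sharp(\eta)+\eta \in \mathrm{Gr}(-\pi_R)$ subject to the constraint $\mathrm{pr}_T(a-b) = \pi_L^\sharp(\xi) + \pi_R^\sharp(\eta) = 0$, and such an element is $\pi_L^\sharp(\xi) + (\xi+\eta)$. Writing $\zeta := \xi+\eta$ for its cotangent part, substituting $\eta = \zeta - \xi$ into the constraint gives $(\pi_R^\sharp - \pi_L^\sharp)(\xi) = \pi_R^\sharp(\zeta)$, i.e. $\xi = \omega^\sharp \pi_R^\sharp(\zeta)$, so the tangent part equals $\pi_L^\sharp \omega^\sharp \pi_R^\sharp(\zeta)$. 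This exhibits the product as $\mathrm{Gr}(\pi_{LR})$ with $\pi_{LR}^\sharp = \pi_L^\sharp \omega^\sharp \pi_R^\sharp$. Solving instead for $\eta$ (via $\xi = \zeta - \eta$) expresses the same tangent vector as $-\pi_R^\sharp(\eta) = \pi_R^\sharp \omega^\sharp \pi_L^\sharp(\zeta)$, yielding the symmetric formula and, in particular, the identity $\pi_L^\sharp \omega^\sharp \pi_R^\sharp = \pi_R^\sharp \omega^\sharp \pi_L^\sharp$.

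The computation is essentially a single linear solve, so I do not anticipate genuine difficulty; the two points requiring care are the skew-symmetry of $(\pi_R^\sharp - \pi_L^\sharp)^{-1}$, which is what legitimizes writing it as $\omega^\sharp$ for an honest two-form, and the recognition that the two ways of solving the constraint compute the \emph{same} tangent vector --- which is precisely why the two displayed formulas for $\pi_{LR}^\sharp$ must coincide.
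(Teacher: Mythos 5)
Your proposal is correct and takes essentially the same route as the paper: characterize transversality via nondegeneracy of $\pi_R^{\sharp}-\pi_L^{\sharp}$, use the algebraic description of $\star$ together with Proposition \ref{pro : automatic involutivity} to see the product is the graph of a Poisson bivector, and identify that bivector by a linear computation. The only cosmetic difference is that you obtain the symmetry $\pi_L^{\sharp}\omega^{\sharp}\pi_R^{\sharp}=\pi_R^{\sharp}\omega^{\sharp}\pi_L^{\sharp}$ as a byproduct of solving the constraint in two ways, whereas the paper first verifies this identity by a direct algebraic manipulation and then establishes the containment $\mathrm{Gr}(\pi_{LR})\subset \mathrm{Gr}(\pi_L)\star\mathrm{Gr}(-\pi_R)$ followed by a dimension count.
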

\begin{proof}
Let $\pi_L,\pi_R$ be two Poisson structures on a manifold $M$. Then $\mathrm{Gr}(\pi_L)$ and $\mathrm{Gr}(\pi_R)$ are transverse exactly when the bivector $\pi_L-\pi_R$ is nondegenerate, i.e., the inverse of a two-form $\omega \in \Omega^2(M)$. Note that
 \begin{align*}
  \pi_L^{\sharp}\omega^{\sharp}\pi_R^{\sharp} - \pi_R^{\sharp}\omega^{\sharp}\pi_L^{\sharp} & =  \pi_L^{\sharp}\omega^{\sharp}(\pi_R^{\sharp} - \pi_L^{\sharp}) + (\pi_L^{\sharp}-\pi_R^{\sharp})\omega^{\sharp}\pi_L^{\sharp} = 0
 \end{align*}shows that $\pi_L^{\sharp}\omega^{\sharp}\pi_R^{\sharp}=\pi_R^{\sharp}\omega^{\sharp}\pi_L^{\sharp}$. Therefore a unique bivector $\pi_{LR} \in \mathfrak{X}^2(M)$ exists, such that
 \begin{align}\label{eq : symmetric composition}
\pi_L^{\sharp}\omega^{\sharp}\pi_R^{\sharp} = \pi_{LR}^{\sharp} = \pi_R^{\sharp}\omega^{\sharp}\pi_L^{\sharp}.
 \end{align}
Next observe that, for all $\xi \in T^*M$,
\begin{align*}
\pi_{LR}^{\sharp}(\xi)+\xi & = \pi_{LR}^{\sharp}(\xi)+\omega^{\sharp}\pi_R^{\sharp}(\xi)-\omega^{\sharp}\pi_L^{\sharp}(\xi)
\end{align*}
lies in $\mathrm{Gr}(\pi_L) \star \mathrm{Gr}(-\pi_R)$ in light of \eqref{eq : symmetric composition} and the fact that
\begin{align*}
 && \pi_L^{\sharp}\omega^{\sharp}\pi_R^{\sharp}(\xi)+\omega^{\sharp}\pi_R^{\sharp}(\xi) \in \mathrm{Gr}(\pi_L), 
 && \pi_R^{\sharp}\omega^{\sharp}\pi_L^{\sharp}(\xi)+\omega^{\sharp}\pi_L^{\sharp}(\xi) \in \mathrm{Gr}(\pi_R).
\end{align*}
This shows that $\mathrm{Gr}(\pi_{LR})$ is contained in the tangent product $\mathrm{Gr}(\pi_L) \star \mathrm{Gr}(-\pi_R)$, and therefore, by a dimension count,
\begin{align*}
\mathrm{Gr}(\pi_{LR}) = \mathrm{Gr}(\pi_L) \star \mathrm{Gr}(-\pi_R).
\end{align*}As a tangent product of Dirac structures, Proposition \ref{pro : automatic involutivity} implies that $\mathrm{Gr}(\pi_{LR})$ is Dirac; that is, that $\pi_{LR}$ is a Poisson structure.
\end{proof}

The dual version of Proposition \ref{pro : transverse Poisson}, in which Poisson structures are replaced by closed two-forms, and the tangent product is replaced by the cotangent product, requires the concurrence hypothesis:

\begin{proposition}[Transverse closed two-forms]
 Let $\omega_L,\omega_R$ be two closed two-forms on a manifold $M$. Then $\mathrm{Gr}(\omega_L)$ and $\mathrm{Gr}(\omega_R)$ are transverse Dirac structures exactly when $\omega_{R}-\omega_{L}$ is the inverse of a nondegenerate bivector $\pi \in \mathfrak{X}^2(M)$,
 \begin{align*}
  \omega_{R}^{\sharp}-\omega_{L}^{\sharp} = (\pi^{\sharp})^{-1},
 \end{align*}
 in which case
 \begin{align*}
  \mathrm{Gr}(\omega_L) \circledast \mathrm{Gr}(-\omega_R) = \mathrm{Gr}(\omega_{LR}),
 \end{align*}where $\omega_{LR}$ is the two-form given by
 \begin{align*}
  \omega_{LR}^{\sharp} = \omega_{L}^{\sharp}\pi^{\sharp}\omega_{R}^{\sharp} = \omega_{R}^{\sharp}\pi^{\sharp}\omega_{L}^{\sharp},
 \end{align*}
 and $\omega_{LR}$ is closed exactly when $\mathrm{Gr}(\omega_L)$ and $\mathrm{Gr}(\omega_R)$ concur.
\end{proposition}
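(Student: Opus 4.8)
The plan is to run the argument dual to that of Proposition \ref{pro : transverse Poisson}, interchanging bivectors with two-forms, $\star$ with $\circledast$, and $T$ with $T^*$, and then to isolate the single place where the duality genuinely breaks. First I would establish the transversality characterization directly from the definition of the cotangent product: an element of $\mathrm{Gr}(\omega_L) \cap \mathrm{Gr}(\omega_R)$ is a vector $u$ with $\omega_L^\sharp(u) = \omega_R^\sharp(u)$, so the intersection is trivial exactly when $\omega_R^\sharp - \omega_L^\sharp : TM \to T^*M$ is injective, hence — by a dimension count — an isomorphism, i.e. the inverse $(\pi^\sharp)^{-1}$ of a nondegenerate bivector $\pi$ (which is skew because $\omega_R^\sharp - \omega_L^\sharp$ is).

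Next I would define $\omega_{LR}$. Using $\pi^\sharp(\omega_R^\sharp - \omega_L^\sharp) = \mathrm{id}$, the same add-and-subtract manipulation as in Proposition \ref{pro : transverse Poisson} gives $\omega_L^\sharp \pi^\sharp \omega_R^\sharp - \omega_R^\sharp \pi^\sharp \omega_L^\sharp = \omega_L^\sharp \pi^\sharp(\omega_R^\sharp - \omega_L^\sharp) - (\omega_R^\sharp - \omega_L^\sharp)\pi^\sharp \omega_L^\sharp = \omega_L^\sharp - \omega_L^\sharp = 0$, so the two composites agree and define $\omega_{LR}^\sharp$; transposing and using skewness of $\omega_L^\sharp, \omega_R^\sharp, \pi^\sharp$ confirms $\omega_{LR}^\sharp$ is skew, so $\omega_{LR}$ is a genuine two-form. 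For the product formula I would prove the inclusion $\mathrm{Gr}(\omega_{LR}) \subseteq \mathrm{Gr}(\omega_L) \circledast \mathrm{Gr}(-\omega_R)$: given $w \in TM$, decompose $w = \pi^\sharp\omega_R^\sharp(w) - \pi^\sharp\omega_L^\sharp(w)$, put $u_a := \pi^\sharp\omega_R^\sharp(w)$, $u_b := -\pi^\sharp\omega_L^\sharp(w)$, and take $a := u_a + \omega_L^\sharp(u_a) \in \mathrm{Gr}(\omega_L)$, $b := u_b - \omega_R^\sharp(u_b) \in \mathrm{Gr}(-\omega_R)$. Then $\mathrm{pr}_{T^*}(a-b) = \omega_L^\sharp(u_a) + \omega_R^\sharp(u_b) = \omega_{LR}^\sharp(w) - \omega_{LR}^\sharp(w) = 0$ by the symmetric-composition identity, while $a + \mathrm{pr}_T(b) = w + \omega_{LR}^\sharp(w)$, a point of $\mathrm{Gr}(\omega_{LR})$. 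Since transversality makes $\mathrm{Gr}(\omega_L) \circledast \mathrm{Gr}(-\omega_R) = \mathrm{Gr}(\omega_L) \circledast \mathcal{R}_{-1}(\mathrm{Gr}(\omega_R))$ the graph of a smooth two-form by Lemma \ref{lem : equivalent conditions for transversality revisited}, both sides have rank $\dim M$ and the inclusion is an equality.

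It remains to identify closedness of $\omega_{LR}$ with concurrence, and this is exactly where the dual of Proposition \ref{pro : automatic involutivity} is unavailable: a smooth cotangent product of Dirac structures need not be Dirac, so closedness of $\omega_{LR}$ is not automatic and is the entire content of the statement. I would argue that $\omega_{LR}$ is closed iff $\mathrm{Gr}(\omega_{LR})$ is Dirac iff $\mathrm{Gr}(\omega_L) \circledast \mathrm{Gr}(-\omega_R)$ is Dirac. Because the pair is transverse, Criterion \ref{circledast criterion} shows both $\mathrm{Gr}(\omega_L) \circledast \mathrm{Gr}(\omega_R)$ and $\mathrm{Gr}(\omega_L) \circledast \mathrm{Gr}(-\omega_R)$ are smooth, so for each of them being Dirac coincides with being involutive. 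Applying Proposition \ref{pro : concurrence pairwise and scalars}(b) with $n = 2$, $t_1 = 1$, $t_2 = -1$, together with $\mathcal{R}_{-1}(\mathrm{Gr}(\omega_R)) = \mathrm{Gr}(-\omega_R)$, shows that $\mathrm{Gr}(\omega_L) \circledast \mathrm{Gr}(-\omega_R)$ is involutive iff $\mathrm{Gr}(\omega_L) \circledast \mathrm{Gr}(\omega_R)$ is; the latter, being smooth, is Dirac precisely when $\mathrm{Gr}(\omega_L)$ and $\mathrm{Gr}(\omega_R)$ concur. Chaining these equivalences gives the claim. The main obstacle is conceptual rather than computational: recognizing that the sign flip $\omega_R \mapsto -\omega_R$ separating the product formula from the concurrence hypothesis is harmless at the level of involutivity, which is exactly what the scalar-invariance of Proposition \ref{pro : concurrence pairwise and scalars}(b) guarantees.
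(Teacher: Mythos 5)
Your proof is correct, and it follows exactly the route the paper intends: the paper in fact omits the proof of this proposition, presenting it only as the dual of Proposition \ref{pro : transverse Poisson}, and your argument is precisely that dualization (same add-and-subtract identity for $\omega_{LR}$, same inclusion-plus-rank-count for the product formula). The one step with no tangent-product analogue --- closedness of $\omega_{LR}$ versus concurrence --- you handle correctly by noting that transversality makes both cotangent products smooth via Criterion \ref{circledast criterion} and that Proposition \ref{pro : concurrence pairwise and scalars}(b) with $t_1=1$, $t_2=-1$ transfers involutivity between $\mathrm{Gr}(\omega_L)\circledast\mathrm{Gr}(-\omega_R)$ and $\mathrm{Gr}(\omega_L)\circledast\mathrm{Gr}(\omega_R)$.
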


\section{Libermann theorem and a local normal form}

One of the most instructive instances of cotangent products concerns the case in which one of the factors is a foliation $\mathscr{F}$. Our goal in this section is to recast the Dirac version  \cite{Frejlich_Marcut} of Libermann's theorem \cite{Libermann} in terms of concurrence of Dirac structures, and to explain how the very language of Dirac products leads naturally to a local normal form expressing a Dirac structure in terms of its building blocks: closed two-forms, foliations and Poisson structures.

Recall that a foliation $\mathscr{F}$ on a manifold $M$ is {\bf simple} if its leaves are the fibres of a surjective submersion $p:M \to N$. Equivalently, $\mathscr{F}$ is simple if its leaf space
\begin{align*}
 M/\mathscr{F} = \{ S \ | \ S \ \text{leaf of} \ \mathscr{F}\},
\end{align*}
equipped with the quotient topology via the canonical projection
\begin{align*}
& p:M \longrightarrow  M/\mathscr{F}, & p(x)=\mathrm{S}_{\mathscr{F}}(x)
\end{align*}
has the structure of a smooth manifold, for which the quotient map $p$ is a submersion. 

\begin{remark}
 The description of a foliation $\mathscr{F}$ on $M$ via foliated charts implies any foliation is simple in an open neighborhood of a point in $M$.
\end{remark}

\begin{proposition}[Libermann theorem]\label{pro : libermann}
 Let $\mathscr{F}$ be a simple foliation on $M$, and let $L$ be a Dirac structure. Then the following conditions are equivalent:
 \begin{enumerate}[i)]
  \item $L$ and $\mathrm{Gr}(\mathscr{F})$ concur;
  \item There exists a Dirac structure $R$ on $N:={M/}\mathscr{F}$, for which
  \begin{align*}
   p:(M,L) \longrightarrow (N,R)
  \end{align*}
  is forward Dirac: $p_!(L)=R$,
 \end{enumerate}
in which case
\begin{align*}
 L \circledast \mathrm{Gr}(\mathscr{F}) = p^!(R).
\end{align*}
\end{proposition}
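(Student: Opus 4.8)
The plan is to make the cotangent product $L \circledast \mathrm{Gr}(\mathscr{F})$ completely explicit and then run the two implications through the pullback/pushforward calculus for the submersion $p$. As preliminaries I would record three facts. First, $T\mathscr{F} = \ker p_*$ and $\mathrm{Gr}(\mathscr{F}) = p^!(T^*N)$ (the pullback of the $\circledast$-unit $T^*N$). Second, unwinding \eqref{eq : explicit definition of circledast} and using $(T\mathscr{F})^{\perp} = TM \oplus N^*\mathscr{F}$ gives
\[
 L \circledast \mathrm{Gr}(\mathscr{F}) = \big(L \cap (T\mathscr{F})^{\perp}\big) + T\mathscr{F},
\]
so in particular $T\mathscr{F} \subseteq L \circledast \mathrm{Gr}(\mathscr{F})$. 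Third, I would note the pointwise identity $p_!(L) = p_!\big(L \circledast \mathrm{Gr}(\mathscr{F})\big)$, which holds for the crude reason that $p$-relatedness of $a = w+\xi \in L$ to some $b$ already forces $\xi = p^*\mathrm{pr}_{T^*}(b) \in N^*\mathscr{F}$ and $\mathrm{pr}_T(b) = p_* w$, so that adjoining vertical vectors from $T\mathscr{F}$ changes nothing upon pushforward.

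For the implication (ii) $\Rightarrow$ (i) together with the final formula, I would assume $R := p_!(L)$ is Dirac. Since $p$ is a submersion the pullback $p^!(R)$ is smooth, hence Dirac by the pullback statement of Lemma~\ref{lem : pullbacks or pushforwards of Dirac}. A direct relatedness check then shows that each generator $w + Y + \xi$ of $L \circledast \mathrm{Gr}(\mathscr{F})$, with $w+\xi \in L$, $\xi = p^*\eta \in N^*\mathscr{F}$ and $Y \in T\mathscr{F}$, is $p$-related to $p_* w + \eta \in R$, whence $L \circledast \mathrm{Gr}(\mathscr{F}) \subseteq p^!(R)$. As both sides are Lagrangian families, fibrewise equality of dimensions upgrades this inclusion to an equality; being equal to a smooth Dirac structure, $L \circledast \mathrm{Gr}(\mathscr{F})$ is itself Dirac, so $L$ and $\mathrm{Gr}(\mathscr{F})$ concur and the displayed identity $L \circledast \mathrm{Gr}(\mathscr{F}) = p^!(R)$ holds.

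For (i) $\Rightarrow$ (ii), set $K := L \circledast \mathrm{Gr}(\mathscr{F})$, now assumed Dirac. From $T\mathscr{F} \subseteq K$ and $K = K^{\perp}$ one gets $K \subseteq (T\mathscr{F})^{\perp} = TM \oplus N^*\mathscr{F}$, so the covectors of $K$ are pulled back from the base. Moreover, for any vertical $Y \in \Gamma(T\mathscr{F})$, viewing $Y = Y+0 \in \Gamma(K)$, involutivity gives $[Y+0,\Gamma(K)] \subseteq \Gamma(K)$, which is precisely infinitesimal invariance of $K$ under the flows of vertical vector fields. Because $\mathscr{F}$ is simple, $p$ is locally the projection $\mathbb{R}^{k} \times \mathbb{R}^{\dim N} \to \mathbb{R}^{\dim N}$, and a Dirac structure containing $T\mathscr{F}$, with covectors from the base and invariant under vertical flows, is basic: it descends to a Dirac structure $R := p_!(K)$ on $N$ with $p^!(R) = K$. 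Combined with the pointwise identity $p_!(L) = p_!(K) = R$, this shows simultaneously that $L$ is $p$-invariant and that $p_!(L) = R$ is Dirac, which is (ii).

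The main obstacle is the descent step in (i) $\Rightarrow$ (ii): converting the algebraic inputs $T\mathscr{F} \subseteq K$ and involutivity into genuine $p$-invariance of $L$ and the identity $K = p^!(R)$. This is exactly the reduction mechanism underlying the Dirac-geometric Libermann theorem of \cite{Frejlich_Marcut}, on which I would lean; the point requiring care is that simplicity of $\mathscr{F}$ is used to trivialize $p$ locally and to integrate the infinitesimal invariance into honest descent, after which the pointwise identity $p_!(L) = p_!(K)$ transports the conclusion back from $K$ to $L$.
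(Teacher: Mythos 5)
Your proposal is correct and follows essentially the same route as the paper's (deliberately sketchy) proof: both rest on the observations that $T\mathscr{F} \subseteq L \circledast \mathrm{Gr}(\mathscr{F})$, that $p_!(L) = p_!\bigl(L \circledast \mathrm{Gr}(\mathscr{F})\bigr)$ pointwise, and that Dirac structures containing $T\mathscr{F}$ are exactly the pullbacks $p^!(R)$ of Dirac structures on the leaf space. You simply fill in more of the details (the explicit formula for the cotangent product, the relatedness check, and the dimension count) that the paper delegates to the reference \cite{Frejlich_Marcut}.
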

\begin{proof}
 (Sketch) Because fibres of $p$ are connected, Dirac structures on $M$ which contain $T\mathscr{F}$ are exactly pullbacks of Dirac structures on $N$, since in that case, involutivity of $L$ implies that
 \begin{align*}
   p_!(L_x) \subset \mathbb{T}_{p(x)}N
 \end{align*}only depends on $p(x) \in N$ (as opposed to $x \in M$). On the other hand, $L \circledast \mathrm{Gr}(\mathscr{F})$ always contains $T\mathscr{F}$, and
 \begin{align*}
   && p_!(L_x) = p_!\left( L \circledast \mathrm{Gr}(\mathscr{F}) \right)_x, && x \in M,
 \end{align*}shows that
 \begin{align*}
  && p_!(L)=R \ \text{is Dirac} && \Longleftrightarrow && L \circledast \mathrm{Gr}(\mathscr{F})=p^!(R) && \Longleftrightarrow &&  L \circledast \mathrm{Gr}(\mathscr{F}) \ \text{is Dirac} .
 \end{align*}
 See \cite[Proposition 2]{Frejlich_Marcut} for further details.
\end{proof}

This leads to the following useful observation:
\begin{lemma}\label{lem : PT form for Dirac structures}
 Let $L$ be a Dirac structure on $M$. A closed two-form $\omega \in \Omega^2(M)$ for which $L \cap \mathrm{Gr}(\omega)$ is a vector subbundle defines a foliation
 \begin{align*}
T\mathscr{F}:=\mathcal{R}_{-\omega}(L \cap \mathrm{Gr}(\omega))  
 \end{align*}
If this foliation is simple, and defined by a surjective submersion $p:M \to N$, then there is a Poisson structure $\pi$ on $N$, such that
 \[
  L = \mathcal{R}_{\omega}p^!\mathrm{Gr}(\pi_N). 
 \]
 \end{lemma}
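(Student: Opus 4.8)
The plan is to strip off the gauge by $\omega$, recognize the resulting Dirac structure as a pullback by invoking the Libermann theorem (Proposition \ref{pro : libermann}), and then check that the structure it descends from is the graph of a Poisson bivector.

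First I would set $L':=\mathcal{R}_{-\omega}(L)$. Since $\omega$ is closed, gauge transformation preserves Dirac structures, so $L'$ is again Dirac. As $\mathcal{R}_{-\omega}$ is a fibrewise linear isomorphism of $\mathbb{T}M$ carrying $\mathrm{Gr}(\omega)$ onto $TM$, it identifies $L'\cap TM = \mathcal{R}_{-\omega}(L)\cap\mathcal{R}_{-\omega}(\mathrm{Gr}(\omega)) = \mathcal{R}_{-\omega}(L\cap\mathrm{Gr}(\omega)) = T\mathscr{F}$. In particular $T\mathscr{F}=L'\cap TM$ is the tangent part of a Dirac structure: for sections $u,v\in\Gamma(T\mathscr{F})$ the Dorfman bracket $[u,v]$ is merely the Lie bracket of vector fields and again lies in $L'\cap TM$, so $T\mathscr{F}$ is involutive. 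This is what makes $T\mathscr{F}$ a genuine foliation $\mathscr{F}$, which by hypothesis is simple, with quotient submersion $p:M\to N$ and $T\mathscr{F}=\ker p_*$.

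Next I would feed $L'$ into Proposition \ref{pro : libermann}. Since $T\mathscr{F}\subset L'$ and $L'$ is Lagrangian, $L'=(L')^{\perp}\subset(T\mathscr{F})^{\perp}$, so the explicit formula \eqref{eq : explicit definition of circledast} gives $L'\circledast\mathrm{Gr}(\mathscr{F}) = L'\cap(T\mathscr{F})^{\perp}+T\mathscr{F}=L'$, which is Dirac. Hence $L'$ concurs with $\mathrm{Gr}(\mathscr{F})$, and Proposition \ref{pro : libermann} yields a Dirac structure $R:=p_!(L')$ on $N$ with $L'=p^!(R)$. It then remains to identify $R$ as the graph of a Poisson bivector, i.e. to show $R\cap TN=0$. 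A vector $v\in R\cap TN$ is $p$-related to some $a\in L'$ with $\mathrm{pr}_{T^*}(a)=p^*(0)=0$; thus $a\in L'\cap TM=\ker p_*$, forcing $v=p_*\mathrm{pr}_T(a)=0$. Therefore $\mathrm{pr}_{T^*}\colon R\to T^*N$ is injective, hence an isomorphism by a dimension count, so $R=\mathrm{Gr}(\pi)$ for a bivector $\pi$ on $N$; as $R$ is Dirac, $\pi$ is Poisson. Finally, since the $\Omega^2(M)$-action satisfies $\mathcal{R}_{\omega}\mathcal{R}_{-\omega}=\mathrm{id}$, I recover $L=\mathcal{R}_{\omega}(L')=\mathcal{R}_{\omega}p^!(\mathrm{Gr}(\pi))$, as claimed.

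The two genuinely load-bearing steps are verifying that $L'$ contains the vertical distribution, so that Libermann applies, and the transversality computation $R\cap TN=0$ that upgrades the descended Dirac structure to a Poisson graph. The first is where the hypothesis that $L\cap\mathrm{Gr}(\omega)$ be a subbundle is used, since it guarantees $T\mathscr{F}$ is smooth and hence a foliation; the second is the crux, and hinges precisely on the defining property $L'\cap TM=T\mathscr{F}$, namely that the only elements of $L'$ with vanishing cotangent part are the vertical vectors. Everything else is bookkeeping with the gauge action and the pushforward/pullback formalism, so I expect the main subtlety to be purely in marshalling these two observations rather than in any delicate estimate.
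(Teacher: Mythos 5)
Your proof is correct and follows essentially the same route as the paper's: gauge away $\omega$ to get $L'=\mathcal{R}_{-\omega}(L)$, note $T\mathscr{F}=L'\cap TM\subset L'$ so that $L'\circledast\mathrm{Gr}(\mathscr{F})=L'$, invoke Proposition \ref{pro : libermann}, and identify the descended Dirac structure as a Poisson graph via $R\cap TN=0$. Your write-up is in fact slightly more careful than the paper's at the final step (the paper's phrase ``any $a$ $p$-related to $b\in R$ is automatically vertical'' only makes sense for $b\in R\cap TN$, which is exactly what you check), but there is no difference in substance.
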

 \begin{proof}
  The hypotheses imply that the linear family $\mathcal{R}_{-\omega}(L \cap \mathrm{Gr}(\omega))$ is involutive and smooth; it therefore corresponds to a foliation $\mathscr{F}$ on $M$. Moreover, because $T\mathscr{F} \subset \mathcal{R}_{-\omega}(L)$, it follows that
  \begin{align*}
   \mathcal{R}_{-\omega}(L) \circledast \mathrm{Gr}(\mathscr{F})=\mathcal{R}_{-\omega}(L).
  \end{align*}Therefore, by Proposition \ref{pro : libermann}, if $\mathscr{F}$ is defined by surjective submersion $p:M \to N$, there is a Dirac structure $R$ on $N$, such that
  \begin{align*}
   \mathcal{R}_{-\omega}(L) = p^!(R).
  \end{align*}
  Note also that any $a \in \mathcal{R}_{-\omega}(L)$ which is $p$-related to $b \in R$ is automatically vertical, and so $R \cap TN = 0$ --- that is, $R=\mathrm{Gr}(\pi)$ for a Poisson structure $\pi$ on $N$. Therefore
  \begin{align*}
   L = \mathcal{R}_{\omega}p^!\mathrm{Gr}(\pi),
  \end{align*}
  and this concludes the proof.
 \end{proof}

We now give a simple application of the ideas in this note --- namely, we employ cotangent products and concurrence to reprove in rather trivial fashion a version of the local normal form theorem \cite[Theorem 3.2]{Blohmann} (cf. also \cite{Dufour_Wade}) :

\begin{theorem}[Local normal form]\label{thm : Local normal form around points}
 Let $L$ be a Dirac structure on $M$. Then every $x \in M$ has an open neighborhood $U$ on which $L$ takes the form
 \begin{align*}
  L|_U = \mathcal{R}_{\mathrm{d}\alpha}p^!\mathrm{Gr}(\pi),
 \end{align*}
 where $p:U \to N$ is a surjective submersion with connected fibres, $\pi$ is a Poisson structure on $N$, and $\alpha$ is a one-form on $U$.
 \end{theorem}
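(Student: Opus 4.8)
The plan is to reduce everything to Lemma \ref{lem : PT form for Dirac structures}, whose hypotheses are that a \emph{closed} two-form $\omega$ be found with $L \cap \mathrm{Gr}(\omega)$ a vector subbundle, and that the ensuing foliation be simple. Since every foliation is simple on a sufficiently small neighborhood (the Remark preceding Proposition \ref{pro : libermann}), and one may shrink further to a foliated chart to secure connected fibres, the whole problem collapses to producing, near $x$, an \emph{exact} two-form $\omega = \mathrm{d}\alpha$ for which $L \cap \mathrm{Gr}(\mathrm{d}\alpha)$ is a subbundle. Applying the gauge $\mathcal{R}_{-\mathrm{d}\alpha}$, which fixes $\mathrm{pr}_T$ and carries $\mathrm{Gr}(\mathrm{d}\alpha)$ to $TM$ (Example \ref{example : two-forms}), this is equivalent to arranging that the null distribution $\mathcal{R}_{-\mathrm{d}\alpha}(L) \cap TM$ have locally constant rank.

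I would then compute this null distribution at $x$ itself. Writing $W := \mathrm{pr}_T(L_x) = T_x\mathrm{S}_L(x)$ and $\omega_{\mathrm{S}}$ for the leafwise two-form, one has $L_x = \{v + \beta \mid v \in W,\ \beta|_W = \iota_v\omega_{\mathrm{S}}\}$, from which a direct check gives
\[
 \mathcal{R}_{-\mathrm{d}\alpha}(L)_x \cap T_xM = \ker\big( (\mathrm{d}\alpha)_x|_W - \omega_{\mathrm{S}} \big) \subset W.
\]
I would choose the \emph{value} $(\mathrm{d}\alpha)_x$ so that $(\mathrm{d}\alpha)_x|_W - \omega_{\mathrm{S}}$ is as nondegenerate as $W$ permits --- nondegenerate if $\dim W$ is even, of corank one if $\dim W$ is odd --- so that the null space at $x$ has the least dimension ($0$ or $1$) allowed by the parity of $\dim W$. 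Such a prescribed value is realized by an exact form, since in a chart centred at $x$ every constant-coefficient two-form is $\mathrm{d}$ of an explicit linear one-form $\alpha$.

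The crux --- and the step I expect to be the main obstacle --- is to upgrade this pointwise minimality at $x$ to local constancy of rank, so that $L \cap \mathrm{Gr}(\mathrm{d}\alpha)$ is genuinely a subbundle. Here two facts conspire. First, for the fixed smooth family $\mathcal{R}_{-\mathrm{d}\alpha}(L)$, the function $y \mapsto \dim\big(\mathcal{R}_{-\mathrm{d}\alpha}(L)_y \cap T_yM\big)$ is upper semicontinuous, being the corank of $\mathrm{pr}_{T^*}$ restricted to the bundle; hence it is $\le$ its value at $x$ on a neighborhood. Second, this null space is exactly the radical of the leafwise two-form on $\mathrm{pr}_T(\mathcal{R}_{-\mathrm{d}\alpha}(L)_y) = \mathrm{pr}_T(L_y)$, so its dimension is congruent mod $2$ to $\mathrm{rank}\,\mathrm{pr}_T(L_y)$, whose parity is locally constant. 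Since the value at $x$ was arranged to be $0$ or $1$ --- the least integer in its parity class --- upper semicontinuity keeps nearby values from exceeding it, while the parity constraint forbids them from dropping below; the rank is therefore constant near $x$.

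With the subbundle $L \cap \mathrm{Gr}(\mathrm{d}\alpha)$ in hand, the foliation $T\mathscr{F} = \mathcal{R}_{-\mathrm{d}\alpha}(L \cap \mathrm{Gr}(\mathrm{d}\alpha))$ of Lemma \ref{lem : PT form for Dirac structures} is well-defined; shrinking $U$ makes it simple with connected fibres, given by a surjective submersion $p : U \to N$. The Lemma then produces a Poisson structure $\pi$ on $N$ with $L|_U = \mathcal{R}_{\mathrm{d}\alpha}\,p^!\mathrm{Gr}(\pi)$, which is precisely the asserted normal form. I note in passing that this argument in fact yields a foliation of rank $0$ or $1$, so the normal form can always be realized with fibre dimension at most one.
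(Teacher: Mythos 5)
Your proof is correct and follows essentially the same route as the paper's: gauge $L$ by an exact two-form chosen so that the null distribution $\mathcal{R}_{-\mathrm{d}\alpha}(L)\cap TM$ has at $x$ the minimal rank ($0$ or $1$) allowed by the parity of $L$, use semicontinuity together with local constancy of the parity to promote this to a genuine subbundle near $x$, and then conclude via Lemma \ref{lem : PT form for Dirac structures}. The only difference is presentational: you spell out the pointwise construction of $(\mathrm{d}\alpha)_x$ and the semicontinuity-plus-parity argument that the paper compresses into the statement that maximality of the rank of $\mathrm{pr}_{T^*}$ on $\mathcal{R}_{-\mathrm{d}\alpha}(L)$ is an open condition.
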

 \begin{proof}
 Let $x \in M$. Then in an open set $x \in U \subset M$, one can find a one-form $\alpha \in \Omega^1(U)$ for which
 \begin{align*}
  \mathrm{pr}_{T^*}: \mathcal{R}_{-\mathrm{d}\alpha}(L)_x \longrightarrow T^*_xM
 \end{align*}
has maximal rank --- that is, has rank $\dim(M)$ or $\dim(M)-1$, depending on the parity of $L$ alone. This being an open condition, we may assume it holds throughout $U$. In the case where the rank is $\dim(M)$, this already implies that $\mathcal{R}_{-\mathrm{d}\alpha}(L)|_U$ is the graph of a bivector, which is necessarily Poisson. In the case where the rank is $\dim(M)-1$, $\mathcal{R}_{-\mathrm{d}\alpha}(L) \cap TU$ defines a one-dimensional foliation $\mathscr{F}$; shrinking $U$ if need be, we may assume that $\mathscr{F}$ is defined by a surjective submersion $p : U \to N$ with connected fibres, in which case the result follows from Lemma \ref{lem : PT form for Dirac structures}.
\end{proof}

\section{Magri-Morosi conditions}

Our next illustration of how concurrence encompasses classical notions concerns Poisson structures and closed two-forms. As we discuss below, concurrence between a Poisson structure $\pi \in \mathfrak{X}^2(M)$ and a closed two-form $\omega \in \Omega^2(M)$ has various classical, equivalent descriptions, and recover the notion of $P\Omega$-structures \cite{Magri_Morosi} and complementary two-forms of \cite{Vaisman_forms}.

\subsection*{Twisted brackets}

An endomorphism $a:TM \to TM$ gives rise to a {\bf twisted} bracket
\begin{align*}
 & {[\cdot,\cdot]}^a : \mathfrak{X}(M) \times \mathfrak{X}(M) \to \mathfrak{X}(M), & [u,v]^a:=[a(u),v]+[u,a(v)]-a[u,v].
\end{align*}
It will also be convenient to consider endomorphisms $b:\mathbb{T}M \to \mathbb{T}M$ and twists of the Dorfman bracket:
\begin{align*}
 {[\cdot,\cdot]}^b  &: \Gamma(\mathbb{T}M) \times \Gamma(\mathbb{T}M) \to \Gamma(\mathbb{T}M),\\
 [u+\xi,v+\eta]^b & :=[b(u+\xi),v+\eta]+[u+\xi,b(v+\eta)]-b[u+\xi,v+\eta].
\end{align*}
Regarding a bivector $\pi \in \mathfrak{X}^2(M)$ as an endomorphism of $\mathbb{T}M$, we have that
\begin{align*}
 & {[\cdot,\cdot]}^{\pi} : \Omega^1(M) \times \Omega^1(M) \to \Omega^1(M), & [\xi,\eta]^{\pi}:=\mathscr{L}_{\pi(\xi)}\eta-\iota_{\pi(\eta)}\mathrm{d}\xi
\end{align*}
is the Koszul bracket of $\pi$. Regarding a two-form $\omega \in \Omega^2(M)$ as an endomorphism of $\mathbb{T}M$, we have that
\begin{align*}
  & {[\cdot,\cdot]}^{\omega} : \mathfrak{X}(M) \times \mathfrak{X}(M) \to \Omega^1(M), & [u,v]^{\omega}:=[\omega(u),v]+[u,\omega(v)]-\omega[u,v]
\end{align*}
coincides with $\mathrm{d}\omega(u,v)$.

\begin{remark}
 In the formulas above, as in the remainder of this section, we neglect to write the superscript $\sharp$ in the linear maps $\pi^{\sharp}:T^*M \to TM$ and $\omega^{\sharp}:TM \to T^*M$ in order not to overburden the notation. 
\end{remark}

\subsection*{Nijenhuis endomorphisms}
The {\bf Nijenhuis torsion} of an endomorphism $a:TM \to TM$ is given by
\begin{align*}
 & \mathrm{N}(a):\mathfrak{X}(M) \times \mathfrak{X}(M) \to \mathfrak{X}(M), & \mathrm{N}(a)(u,v):=a[u,v]^a-[a(u),a(v)]
\end{align*}
and we say that $a$ is {\bf Nijenhuis} if $\mathrm{N}(a)=0$. In that case, $(TM,{[\cdot,\cdot]}^a,a)$ is a Lie algebroid structure on $TM$; that is,
\begin{align*}
 & \left[u,[v,w]^a\right]^a = \left[[u,v]^a,w\right]^a+\left[v,[u,w]^a\right]^a, & [u,fv]^a=f[u,v]^a+(\mathscr{L}_{a(u)}f)v
\end{align*}
hold for all $u,v,w \in \mathfrak{X}(M)$ and all $f \in C^{\infty}(M)$. Moreover, if $a$ is Nijenhuis, then so is any power of $a$.

\subsection*{$PN$-structures}

Let $a:TM \to TM$ be a endomorphism, and $\pi\in \mathfrak{X}^2(M)$ be a bivector. We say that $\pi$ is $a$-symmetric if
\begin{align*}
 \pi(a^*(\xi),\eta) = \pi(\xi,a^*(\eta)), & \xi,\eta \in \Omega^1(M).
\end{align*}
If that is the case, then $a^i\pi$ are bivectors. The {\bf concomitant} of a endomorphism $a$ and an $a$-symmetric bivector $\pi$ is
\begin{align}\label{eq : concomitant a pi}
 \mathrm{C}(a,\pi)(\xi,\eta):=a^*[\xi,\eta]^{\pi} - \left([a^*(\xi),\pi(\eta)]+[\pi(\xi),a^*(\eta)]\right).
\end{align}
When $\pi$ is Poisson, $a$ is Nijenhuis, and the concomitant \eqref{eq : concomitant a pi} of $(\pi,a)$ vanishes, we say that $(\pi,a)$ is a {\bf $PN$-structure}, in which case the bivectors $\pi_i$ are all Poisson and commute pairwise.

\subsection*{$\Omega N$-structures}

Let $a:TM \to TM$ be a endomorphism, and $\omega \in \Omega^2(M)$ be a two-form. We say that $\omega$ is $a$-symmetric if
\begin{align*}
 & \omega(a(u),v) = \omega(u,a(v)), & u,v \in \mathfrak{X}(M).
\end{align*}
If that is the case, then $\omega a^i$ are two-forms. The {\bf concomitant} of an endomorphism $a$ and an $a$-symmetric two-form $\omega$ is the tensor $\mathrm{C}(a,\omega) \in \Gamma(\wedge^2T^*M \otimes T^*M)$ given by
\begin{align}\label{eq : concomitant a omega}
 \mathrm{C}(a,\omega)(u,v):=[a(u),\omega(v)]+[\omega(u),a(v)]-\omega[u,v]^a + a^*[u,v]^{\omega}.
\end{align}
If an $a$-symmetric two-form $\omega$ is closed, $a$ is Nijenhuis, the concomitant \eqref{eq : concomitant a omega} above vanishes exactly when the two-forms $\omega_i \in \Omega^2(M)$ are all closed, in which case we say that $(\omega,a)$ is a {\bf $\Omega N$-structure}. 

\subsection*{$P\Omega$-structures}

Let $\pi\in \mathfrak{X}^2(M)$ be a Poisson structure and $\omega \in \Omega^2(M)$ be a closed two-form. Then note that both $\pi$ and $\omega$ are symmetric with respect to the endomorphism $a:=\pi\omega:TM \to TM$, and that the concomitant \eqref{eq : concomitant a pi} of $\pi$ and $a$ vanishes, 
\begin{align*}
 \mathrm{C}(a,\pi)(\xi,\eta) & =\omega\pi[\xi,\eta]^{\pi} - ([\omega\pi(\xi),\pi(\eta)]+[\pi(\xi),\omega\pi(\eta)])\\
 & =\omega[\pi(\xi),\pi(\eta)] - ([\omega\pi(\xi),\pi(\eta)]+[\pi(\xi),\omega\pi(\eta)])\\
 & = 0,
\end{align*}
while the concomitant \eqref{eq : concomitant a omega} of $\omega$ and $a$ vanishes exactly when $\omega\pi\omega$ is a closed two-form:
\begin{align*}
 \mathrm{C}(a,\omega)(u,v) & = [\pi\omega(u),\omega(v)]+[\omega(u),\pi\omega(v)] - \omega[u,v]^{\pi\omega}\\
 & = \omega[\pi\omega(u),v]-[\omega\pi\omega(u),v]+\omega[u,\pi\omega(v)]-[u,\omega\pi\omega(v)]- \omega[u,v]^{\pi\omega}\\
 & = \omega\pi\omega[u,v]+\omega[u,v]^{\pi\omega}-\omega\pi\omega[u,v]-[u,v]^{\omega\pi\omega}- \omega[u,v]^{\pi\omega}\\
 & = - [u,v]^{\omega\pi\omega},
\end{align*}
in which case $a$ is a Nijenhuis endomorphism:
\begin{align*}
 \mathrm{N}(\pi\omega)(u,v) & = \pi\omega[u,v]^{\pi\omega}-[\pi\omega(u),\pi\omega(v)]\\
 & = \pi\left(\omega[u,v]^{\pi\omega}-[\omega(u),\omega(v)]^{\pi}\right)\\
 & = \pi[u,v]^{\omega\pi\omega}.
\end{align*}
We say that $(\pi,\omega)$ is a {\bf $P\Omega$-structure} if $\omega\pi\omega \in \Omega^2(M)$ is closed; equivalently, $(\pi,\omega)$ is a $P\Omega$-structure if 
\begin{align*}
 (\pi,a) \ \ \text{is a $PN$-structure and} \  (\omega,a) \ \text{is a $\Omega N$-structure.}
\end{align*}

\begin{example}\normalfont
 On $M:=\mathbb{C}^n$, with complex coordinates $z_i=x_i+iy_i$, consider the Poisson structure and the closed two-form
 \begin{align*}
  & \pi=\sum_{i=1}^nr_i^2\tfrac{\partial}{\partial x_i} \wedge \tfrac{\partial}{\partial y_i}, & \omega=-\sum_{i=1}^n \mathrm{d} x_i \wedge \mathrm{d} y_i,
 \end{align*}
 where $r_i^2=x_i^2+y_i^2$. Then $(\pi,\omega)$ is a $P\Omega$-structure:
 \begin{align*}
  a = \pi\omega = \sum_{i=1}^n r_i^2 \left(\mathrm{d} x_i \otimes \tfrac{\partial}{\partial x_i}+\mathrm{d} y_i \otimes \tfrac{\partial}{\partial y_i} \right)
 \end{align*}
is Nijenhuis, $(\pi,a)$ is a $PN$-structure, and $(\omega,a)$ is a $\Omega N$-structure, and
 \begin{align*}
  & \pi_n=\sum_{i=1}^nr_i^{2n+2}\tfrac{\partial}{\partial x_i} \wedge \tfrac{\partial}{\partial y_i}, & \omega_n=-\sum_{i=1}^n r_i^{2n}\mathrm{d} x_i \wedge \mathrm{d} y_i.
 \end{align*}
\end{example}

\subsection*{Two-forms complementary to Poisson structures}

There is yet another compatibility condition between forms and Poisson structures. Let $A$ denote the Lie algebroid $(T^*M,{[\cdot,\cdot]}^{\pi},\pi)$ associated to a Poisson structure $\pi \in \mathfrak{X}(M)$. Then a two-form $\omega \in \Omega^2(M)$ can be interpreted as a bivector $\omega \in \Gamma(\wedge^2A)$ on the Lie algebroid $A$. The condition that $\omega$ be Poisson in $A$ reads
\begin{align*}
 & \omega[u,v]^{\pi^{\omega}} = [\omega(u),\omega(v)]^{\pi} , & u,v \in \mathfrak{X}(M) = \Gamma(A^*).
\end{align*}
If that is the case, we say that $\omega$ is a {\bf complementary} two-form to the Poisson structure $\pi$.

\begin{proposition}\label{pro : equivalent POmega}
Let $\pi \in \mathfrak{X}^2(M)$ a Poisson structure and $\omega \in \Omega^2(M)$ a closed two-form. Let $a$ be the endomorphism of $TM$ defined by $a:=\pi^{\sharp}\omega^{\sharp}$. Then the following are equivalent:
 \begin{enumerate}[i)]
  \item $\mathrm{Gr}(\pi)$ and $\mathrm{Gr}(\omega)$ concur;
  \item $(\pi,\omega)$ is a $P\Omega$-structure;
  \item $\omega$ is a complementary two-form to $\pi$.
 \end{enumerate}
\end{proposition}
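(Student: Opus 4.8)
The plan is to prove the equivalences in the order (ii)$\Leftrightarrow$(iii) and then (i)$\Leftrightarrow$(ii), treating the first as bookkeeping and the second as the computational heart. For (ii)$\Leftrightarrow$(iii) I would observe that both conditions are repackagings of the vanishing of the concomitant $\mathrm{C}(a,\omega)$ with $a=\pi\omega$, already computed in the preceding subsection. Indeed, since $[a(u),\omega(v)]+[\omega(u),a(v)]=\mathscr{L}_{a(u)}\omega(v)-\iota_{a(v)}\mathrm{d}\omega(u)=[\omega(u),\omega(v)]^{\pi}$, and $[u,v]^{\omega}=\mathrm{d}\omega(u,v)=0$ because $\omega$ is closed, formula \eqref{eq : concomitant a omega} collapses to $\mathrm{C}(a,\omega)(u,v)=[\omega(u),\omega(v)]^{\pi}-\omega[u,v]^{a}$. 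The right-hand side vanishes precisely when $\omega$ is complementary to $\pi$, which is condition (iii) by definition; meanwhile the subsection's identity $\mathrm{C}(a,\omega)(u,v)=-\mathrm{d}(\omega\pi\omega)(u,v)$ shows the same expression vanishes exactly when $\omega\pi\omega$ is closed, i.e. when $(\pi,\omega)$ is a $P\Omega$-structure, which is (ii). So (ii) and (iii) are one equation written two ways.

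For (i)$\Leftrightarrow$(ii) I would first invoke Example~\ref{example : coproduct as gauge} to write $\mathrm{Gr}(\pi)\circledast\mathrm{Gr}(\omega)=\mathcal{R}_{\pi}\mathrm{Gr}(\omega)$, which is \emph{automatically smooth}; hence $\mathrm{Gr}(\pi)$ and $\mathrm{Gr}(\omega)$ concur if and only if this Lagrangian subbundle is involutive, i.e. its Courant tensor vanishes. I would then compute that tensor directly on the sections $s_u:=(\mathrm{id}+a)(u)+\omega(u)$, $u\in\mathfrak{X}(M)$, which exhaust $\Gamma(\mathcal{R}_{\pi}\mathrm{Gr}(\omega))$. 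Expanding $\langle[s_u,s_v],s_w\rangle$ via the Dorfman bracket produces three kinds of terms: those built purely from the Lie bracket and $\mathrm{d}\omega$, those built purely from $\pi$ (its Schouten square), and the mixed terms. The hypotheses $\mathrm{d}\omega=0$ and $[\pi,\pi]=0$ ($\pi$ Poisson) annihilate the first two families, and the plan is to show that the surviving mixed contribution equals $\langle\mathrm{C}(a,\omega)(u,v),w\rangle$, equivalently $-\mathrm{d}(\omega\pi\omega)(u,v,w)$. Vanishing of the Courant tensor is then equivalent to $\omega\pi\omega$ being closed, which is (ii), and the three equivalences close up.

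The main obstacle is precisely this last computation for (i)$\Leftrightarrow$(ii): the gauge-transformed sections $s_u$ couple the tangent part $(\mathrm{id}+a)(u)$ to the cotangent part $\omega(u)$, so the Dorfman bracket generates many terms, and care is needed to reorganize them — repeatedly using $(\pi^{\sharp})^{*}=-\pi^{\sharp}$, $(\omega^{\sharp})^{*}=-\omega^{\sharp}$ and the resulting $a^{*}=\omega\pi$ — into the single closed-form obstruction $\mathrm{d}(\omega\pi\omega)$. Conceptually this is unsurprising: $\mathrm{Gr}(\pi)$ is the Lie algebroid $(T^*M,[\cdot,\cdot]^{\pi},\pi)$, the two-form $\omega$ is a bivector on it, and $\mathcal{R}_{\pi}$ identifies the ambient structure of that algebroid with the standard one on $\mathbb{T}M$, so concurrence says exactly that $\omega$ is Poisson on $A$, i.e. (iii). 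Since the excerpt proceeds through explicit Courant tensors rather than through the Courant algebroid of $A$, I would carry the argument out by the direct computation above, with this conceptual picture serving only as a sanity check.
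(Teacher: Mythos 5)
Your proposal is correct and follows essentially the same route as the paper: the paper likewise realizes $\mathrm{Gr}(\pi)\circledast\mathrm{Gr}(\omega)=\mathcal{R}_{\pi}\mathrm{Gr}(\omega)$ as the span of the sections $s_u=(\mathrm{id}+a)(u)+\omega(u)$ and computes $\langle[s_u,s_v],s_w\rangle=-\langle\mathrm{R}_{\omega}(u,v),w\rangle$, where $\mathrm{R}_{\omega}(u,v)=\omega[u,v]^{a}-[\omega(u),\omega(v)]^{\pi}=[u,v]^{\omega\pi\omega}$ is exactly your single obstruction $-\mathrm{C}(a,\omega)$, so all three conditions are identified with the vanishing of one tensor. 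The only step you label ``by definition'' that in fact requires a short check is the identity $[u,v]^{\pi^{\omega}}=[u,v]^{\pi\omega}$ needed to match condition iii) with the vanishing of that same tensor; the paper carries this two-line computation out explicitly.
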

\begin{proof}
Introduce the tensor
\begin{align*}
        & \mathrm{R}_{\omega} : \mathfrak{X}(M) \times \mathfrak{X}(M) \to \Omega^1(M), & \mathrm{R}_{\omega}(u,v):=\omega[u,v]^{a}-[\omega(u),\omega(v)]^{\pi}
\end{align*}
and consider the Lagrangian family
    \begin{align*}
        & \mathrm{Gr}(\pi) \circledast \mathrm{Gr}(\omega) = \{s_u \ | \ u \in TM\}, & s_u:=b(u)+\omega(u),
    \end{align*}where $b:TM \to TM$ denotes the endomorphism $b:=\mathrm{id}+a$. This is patently a smooth structure, and it is Dirac iff
    \begin{align*}
        \Upsilon(s_u,s_v,s_w):=\langle [s_u,s_v],s_w\rangle
    \end{align*}vanishes for all $u,v,w \in \mathfrak{X}(M)$. Now,
    \begin{align*}
        \Upsilon(s_u,s_v,s_w)& = \langle [b(u),b(v)],\omega(w)\rangle + \langle [b(u),\omega(v)]+[\omega(u),b(v)],b(w) \rangle \\
        & = \langle b^*([b(u),\omega(v)]+[\omega(u),b(v)])-\omega[b(u),b(v)],w \rangle.
    \end{align*}
    On the one hand,
    \begin{align*}
        b^*([b(u),\omega(v)]+[\omega(u),b(v)]) & = b^*([u,\omega(v)]+[\omega(u),v]+[a(u),\omega(v)]+[\omega(u),a(v)])\\
        & = b^*(\omega[u,v]+[\omega(u),\omega(v)]^{\pi})\\
        & = \omega[u,v]+[\omega(u),\omega(v)]^{\pi}+\omega a[u,v]+a^*[\omega(u),\omega(v)]^{\pi}
    \end{align*}
    while on the other
    \begin{align*}
        \omega[b(u),b(v)] & = \omega[u,v]+\omega[a(u),v]+\omega[u,a(v)]+\omega[a(u),a(v)] \\
        & = \omega[u,v]+\omega[u,v]^{a}+\omega a[u,v]+a^*[\omega(u),\omega(v)]^{\pi}
    \end{align*}and therefore
    \begin{align*}
        \Upsilon(s_u,s_v,s_w) & = \langle [\omega(u),\omega(v)]^{\pi}-\omega[u,v]^{a},w \rangle \\
        & = -\langle \mathrm{R}_{\omega}(u,v),w \rangle
    \end{align*}
    Hence $\mathrm{Gr}(\pi)$ and $\mathrm{Gr}(\omega)$ concur exactly when $\mathrm{R}_{\omega}$ vanishes identically. Next observe that
\begin{align*}
 [u,v]^{\omega\pi\omega} & = [\omega\pi\omega(u),v]+[u,\omega\pi\omega(v)]-\omega\pi\omega[u,v]\\
 & = \omega[\pi\omega(u),v]-[\pi\omega(u),\omega(v)] + \omega[u,\pi\omega(v)]-[\omega(u),\pi\omega(v)]-\omega\pi\omega[u,v]\\
 & = \omega\pi\omega[u,v]+\omega[u,v]^{\pi\omega}-[\omega(u),\omega(v)]^{\pi}-\omega\pi\omega[u,v]\\
 & = \mathrm{R}_{\omega}(u,v)
\end{align*}
shows that $\omega\pi\omega$ is closed exactly when $\mathrm{R}_{\omega}=0$. Hence i) and ii) are equivalent. Finally, note that
\begin{align*}
 [u,v]^{\pi^{\omega}} & = [\omega(u),v]^{\pi}+[u,\omega(v)]^{\pi} \\
 & = [\pi\omega(u),v]-\pi[\omega(u),v]+[u,\pi\omega(v)]-\pi[u,\omega(v)]\\
 & = \pi\omega[u,v]+[u,v]^{\pi\omega}-\pi\omega[u,v]\\
 & = [u,v]^{\pi\omega}
\end{align*}
implies that
\begin{align*}
 \omega[u,v]^{\pi^{\omega}} = [\omega(u),\omega(v)]^{\pi}
\end{align*}
holds true exactly when $\mathrm{R}_{\omega}=0$. Hence i), ii) and iii) are equivalent.
\end{proof}

\section{Complex Dirac structures}\label{sec : Complex Dirac structures}

We consider the version of Courant geometry in which scalars are extended from real- to complex numbers. More explicitly, by
\begin{align*}
 \mathbb{T}_{\mathbb{C}}M:=\mathbb{T}M \otimes_{\mathbb{R}}\mathbb{C}
\end{align*}we denote the complex Courant algebroid of $M$, and by
\begin{align}\label{eq : complex extension}
 & \langle \cdot,\cdot\rangle_{\mathbb{C}} : \mathbb{T}_{\mathbb{C}}M \times_M\mathbb{T}_{\mathbb{C}}M \to \mathbb{C}, & [ \cdot,\cdot ]_{\mathbb{C}} : \Gamma(\mathbb{T}_{\mathbb{C}}M) \times \Gamma(\mathbb{T}_{\mathbb{C}}M) \to \Gamma(\mathbb{T}_{\mathbb{C}}M)
\end{align}the extension of \eqref{eq : symmetric bilinear pairing} and \eqref{eq : dorfman bracket} to $\mathbb{C}$-bilinear maps. We call \emph{complex linear families} subspaces $L \subset \mathbb{T}_{\mathbb{C}}M$ which meet every fibre of $\mathbb{T}_{\mathbb{C}}M$ in a complex-linear subspace. A complex linear family $L$ induces an orthogonal complex linear family
\begin{align*}
 L^{\perp} = \{ a \in  \mathbb{T}_{\mathbb{C}}M \ | \ \langle a,\cdot\rangle_{\mathbb{C}}|_L=0 \}
\end{align*}
and a complex linear family is \emph{Lagrangian} if $L=L^{\perp}$. 

\begin{example}
 If $L$ is a Lagrangian family on $M$, then its {\bf scalar extension}
 \begin{align*}
  L \otimes \mathbb{C} \subset \mathbb{T}_{\mathbb{C}}M
 \end{align*}
 is a complex Lagrangian family on $M$.
\end{example}

\begin{lemma}
 For all complex Lagrangian families $L,R \subset \mathbb{T}_{\mathbb{C}}M$, we have that
 \begin{align*}
  & \overline{L \star R} = \overline{L} \star \overline{R}, & \overline{L \circledast R} = \overline{L} \circledast \overline{R}.
 \end{align*}
 In particular, for every complex Lagrangian family $L \subset \mathbb{T}_{\mathbb{C}}M$, the complex Lagrangian families
 \begin{align*}
  & L \star \overline{L}, & L \circledast \overline{L}.
 \end{align*}are the scalar extension of real Lagrangian families $L_{\star},L_{\circledast} \subset \mathbb{T}M$
 \begin{align*}
  & L_{\star} \otimes \mathbb{C} = L \star \overline{L}, & L_{\circledast} \otimes \mathbb{C} = L \circledast \overline{L}.
 \end{align*}
\end{lemma}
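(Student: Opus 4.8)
The plan is to reduce everything to the explicit pointwise formulas for the two products, together with the single observation that complex conjugation on $\mathbb{T}_{\mathbb{C}}M = \mathbb{T}M \otimes_{\mathbb{R}}\mathbb{C}$ is additive and commutes with the projections $\mathrm{pr}_T$ and $\mathrm{pr}_{T^*}$ (these are real maps, extended $\mathbb{C}$-linearly, so $\overline{\mathrm{pr}_T(a)} = \mathrm{pr}_T(\overline{a})$ and $\overline{\mathrm{pr}_{T^*}(a)} = \mathrm{pr}_{T^*}(\overline{a})$ for all $a$). Since both products are defined fibrewise, no smoothness is involved and I would argue at a single fibre throughout.

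For the two product identities I would use the algebraic descriptions $L\star R = \{a+\mathrm{pr}_{T^*}(b) \mid (a,b)\in L\times R,\ \mathrm{pr}_T(a-b)=0\}$ and, dually, \eqref{eq : explicit definition of circledast}. If $c = a+\mathrm{pr}_{T^*}(b)$ realizes a point of $L \star R$, with $(a,b)\in L\times R$ and $\mathrm{pr}_T(a-b)=0$, then $\overline{c} = \overline{a}+\mathrm{pr}_{T^*}(\overline{b})$ with $(\overline{a},\overline{b})\in\overline{L}\times\overline{R}$ and $\mathrm{pr}_T(\overline{a}-\overline{b}) = \overline{\mathrm{pr}_T(a-b)} = 0$; hence $\overline{c}\in\overline{L}\star\overline{R}$, giving $\overline{L\star R}\subseteq\overline{L}\star\overline{R}$. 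The reverse inclusion is free: applying this to the pair $(\overline{L},\overline{R})$ and using that conjugation is an involution yields $\overline{L}\star\overline{R}\subseteq\overline{L\star R}$, so equality holds. The identity $\overline{L\circledast R} = \overline{L}\circledast\overline{R}$ is proved verbatim, interchanging the roles of $\mathrm{pr}_T$ and $\mathrm{pr}_{T^*}$.

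For the ``in particular'' claim I would first combine commutativity of the products with the identities just established: $\overline{L\star\overline{L}} = \overline{L}\star\overline{\overline{L}} = \overline{L}\star L = L\star\overline{L}$, and likewise $\overline{L\circledast\overline{L}} = L\circledast\overline{L}$, so both families are conjugation-invariant. The key elementary lemma I would invoke is the fibrewise fact that a complex subspace $W\subset\mathbb{T}_{\mathbb{C},x}M$ satisfies $\overline{W}=W$ if and only if $W = (W\cap\mathbb{T}_xM)\otimes_{\mathbb{R}}\mathbb{C}$; the nontrivial direction follows by writing $w = u+iv$ with $u,v\in\mathbb{T}_xM$ and noting that $u = \tfrac{1}{2}(w+\overline{w})$ and $v = \tfrac{1}{2i}(w-\overline{w})$ both lie in $W\cap\mathbb{T}_xM$. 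Setting $L_{\star}:=(L\star\overline{L})\cap\mathbb{T}M$ and $L_{\circledast}:=(L\circledast\overline{L})\cap\mathbb{T}M$ fibrewise then yields $L_{\star}\otimes\mathbb{C} = L\star\overline{L}$ and $L_{\circledast}\otimes\mathbb{C} = L\circledast\overline{L}$.

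It remains to verify that $L_{\star}$ and $L_{\circledast}$ are genuinely \emph{Lagrangian}. Since $L\star\overline{L}$ is a complex Lagrangian family its complex rank is $\dim M$, and as scalar extension preserves rank, $L_{\star}$ has real rank $\dim M$; moreover the complex pairing restricts to $\langle\cdot,\cdot\rangle$ on $\mathbb{T}M$, so $L_{\star}$ is isotropic, and an isotropic real subspace of dimension $\dim M$ is Lagrangian. The same applies to $L_{\circledast}$. I do not anticipate a genuine obstacle: the only steps requiring care are the commutation of conjugation with the projections and the transfer of the Lagrangian condition from the complex family to its real form, both routine once the purely pointwise viewpoint is adopted.
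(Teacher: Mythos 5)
Your proposal is correct and follows essentially the same route as the paper's proof: both reduce to the pointwise algebraic formulas for $\star$ and $\circledast$, use that conjugation commutes with $\mathrm{pr}_T$ and $\mathrm{pr}_{T^*}$, and then identify the conjugation-invariant family $L\star\overline{L}$ with the complexification of its real points. You merely supply more detail than the paper (the involution trick for the reverse inclusion, the $w=u+iv$ decomposition, and the rank-plus-isotropy check that $L_\star$ is Lagrangian), all of which is accurate.
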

\begin{proof}
 Compute
 \begin{align*}
  \overline{L \star R} & = \{ \overline{a} \ | \ a \in L \star R\}\\
  & = \{ \overline{a} \ | \ a = a_L+\mathrm{pr}_{T^*}(a_R), \ \mathrm{pr}_{T}(a_L-a_R)=0\}\\
  & = \{ \overline{a} \ | \ \overline{a} = \overline{a_L}+\mathrm{pr}_{T^*}(\overline{a_R}), \ \mathrm{pr}_{T}(\overline{a_L}-\overline{a_R})=0\}\\
  & = \overline{L} \star \overline{R}.
 \end{align*}
 Therefore
 \begin{align*}
  \overline{L \star \overline{L}} = \overline{L} \star L = L \star \overline{L}
 \end{align*}is the complexification of
 \begin{align*}
  & L_{\star} \otimes \mathbb{C} = L \star \overline{L}, & L_{\star}:=(L \star \overline{L}) \cap \mathbb{T}M.
 \end{align*}
The proof for $\circledast$ is symmetric.
\end{proof}

A {\bf complex Dirac structure} is a complex vector subbundle $L \subset \mathbb{T}_{\mathbb{C}}M$ which is Lagrangian, $L=L^{\perp}$, and whose space of sections $\Gamma(L)$ is involutive under the Dorfman bracket --- equivalently, whose Courant tensor
\begin{align*}
 & \Upsilon_L \in \Gamma(\wedge^3L^*), & \Upsilon_L(a_1,a_2,a_3):=\langle [a_1,a_2]_{\mathbb{C}},a_3\rangle_{\mathbb{C}}
\end{align*}
vanishes identically. When that is the case, $L$ becomes a complex Lie algebroid with anchor $\mathrm{pr}_T:L \to T_{\mathbb{C}}M$ and bracket given by the restriction of the complex Dorfman bracket.

\begin{remark}\normalfont
 A crucial difference between the real and the complex cases is that complex Dirac structures (or complex Lie algebroids, for that matter) do \underline{not} induce a partition of the ambient manifold into leaves/orbits. At its core, the issue is that we do not understand what the ``flow'' of a complex vector field ought to be, or if it makes sense at all. See \cite{Weinstein_complex} for a detailed discussion.
\end{remark}

\begin{example}[Scalar extensions]
The scalar extension $L:= R \otimes \mathbb{C} \subset \mathbb{T}_{\mathbb{C}}M$ of a (usual) Dirac structure $R \subset \mathbb{T} M$. is a complex Dirac structure. Those complex Dirac structures which arise in this manner are those which are invariant under conjugation: $L=\overline{L}$.
\end{example}

\begin{example}[Involutive structures]\label{ex : IS}
 Let $E \subset T_{\mathbb{C}}M$ be an \emph{involutive structure} --- that is, a complex vector subbundle which is involutive under $[\cdot,\cdot]_{\mathbb{C}}$. Then $\mathrm{Gr}(E)=E\oplus E^{\circ}$ is a complex Dirac structure. 
\end{example}

\begin{example}[Generalized complex structures]\label{ex : GCS}
Let $J:\mathbb{T}M \to \mathbb{T}M$ be an orthogonal almost complex structure:
\begin{align*}
 & J^*\langle \cdot,\cdot\rangle = \langle \cdot,\cdot \rangle, & J^2+\mathrm{id}=0.
\end{align*}Its scalar extension $J_{\mathbb{C}}:\mathbb{T}_{\mathbb{C}}M \to \mathbb{T}_{\mathbb{C}}M$ decomposes $\mathbb{T}_{\mathbb{C}}M$ into $\pm i$-eigenbundles $\mathbb{T}_{\mathbb{C}}M=L \oplus \overline{L}$. We say that $J$ is a {\bf generalized complex structure} if $L$ is a complex Dirac structure. A complex Dirac structures arises in this manner exactly if it is transverse to its conjugate: $L \cap \overline{L}=0$.
\end{example}

\begin{remark}\normalfont
 In the case where Criterion \ref{star criterion} applies, the authors of \cite{Aguero_Rubio} call the ``associated real Dirac structure'' the Dirac structure whose scalar extension is
 \begin{align*}
  \widehat{L} = \mathcal{R}_2(L \star \overline{L}).
 \end{align*}
 With our methods, the proof of \cite[Theorem 5.1]{Aguero_Rubio} reduces to Criterion \ref{star criterion} itself.
\end{remark}

\begin{example}\normalfont
Elaborating on a comment in \cite[Section 5]{Aguero_Rubio}, Criteria \ref{star criterion} and \ref{circledast criterion} are not necessary for the Lagrangian families
 \begin{align*}
  & L \star \overline{L}, & L \circledast \overline{L}
 \end{align*}to be the scalar extension of Dirac structures. For example, if $\pi_1 \in \mathfrak{X}^2(M_1)$ and $\pi_2 \in \mathfrak{X}^2(M_2)$ are arbitrary Poisson structures, define on $M:=M_1 \times M_2$ the complex Dirac structure
 \begin{align*}
 L = \mathrm{Gr}(\pi_1) \times \mathrm{Gr}(i\pi_2).
 \end{align*}Then
 \begin{align*}
  & L \star \overline{L} = \mathrm{Gr}(\tfrac{1}{2}\pi_1) \times T_{\mathbb{C}}M_2, & L \circledast \overline{L} = \mathrm{Gr}(2\pi_1) \times T^*_{\mathbb{C}}M_2
 \end{align*}are the scalar extensions of the Dirac structures
 \begin{align*}
  & L_{\star} = \mathrm{Gr}(\tfrac{1}{2}\pi_1) \times TM_2, & L_{\circledast} = \mathrm{Gr}(2\pi_1) \times T^*M_2,
 \end{align*}
 even though Criteria \ref{star criterion} and \ref{circledast criterion} cannot be invoked if either $\pi_1$ or $\pi_2$ does not have constant rank.
\end{example}

\section{Involutive structures and the Frobenius-Nirenberg theorem}

As mentioned in Example \ref{ex : IS}, an involutive structure on $M$ is a complex vector subbundle 
$E\subset T_{\mathbb{C}} M$ whose sections are closed under the complexified Lie bracket $[\cdot,\cdot]_\mathbb{C}$. Before explaining what role concurrence plays within involutive structures, we discuss some particular instances of involutive structures, with a special emphasis on existence of adapted coordinates.

\subsection*{Foliations}

If $\mathscr{F}$ is a foliation on $M$, then
\begin{align*}
 E:=T\mathscr{F} \otimes \mathbb{C} \subset T_{\mathbb{C}}M
\end{align*}
is an involutive structure on $M$. Involutive structures $E$ of this form are exactly those for which $E = \overline{E}$.

\subsection*{Complex structures}

Let $M$ be a complex manifold --- that is, a manifold equipped with an atlas $\mathfrak{A}=\{(U_i,\phi_i)\}$, where $U_i \subset M$ form an open covering, and $\phi_i : U_i \to \mathbb{C}^m$ are open embeddings, for which the transition maps
\begin{align*}
 \phi_{ij}:=\phi_i\phi_j^{-1}:\phi_j(U_i\cap U_j) \to \phi_i(U_i \cap U_j)
\end{align*}
are holomorphic. Then there is an induced involutive structure
\begin{align*}
 E := T^{0,1}M \subset T_{\mathbb{C}}M,
\end{align*}
which satisfies
\begin{align}\label{eq : acomplex as decomposition}
 T_{\mathbb{C}}M = E \oplus \overline{E}.
\end{align}
Any complex vector bundle $E \subset T_{\mathbb{C}}M$ satisfying \eqref{eq : acomplex as decomposition} is the $-i$-eigenbundle of a unique almost-complex structure
\begin{align*}
 & J:TM \to TM, & J^2=-\mathrm{id},
\end{align*}
and $E$ is involutive exactly when the Nijenhuis torsion of $J$ vanishes:
\begin{align*}
 [u,Jv]+[Ju,v]=J\left([u,v]-[Ju,Jv]\right).
 \end{align*}
The famous Newlander-Nirenberg theorem \cite{NN} can be rephrased as follows: if $E$ is an involutive structure satisfying \eqref{eq : acomplex as decomposition}, then around any point $x \in M$ there is a chart $\phi : U \to \mathbb{C}^m$ with the property that
\begin{align*}
 \phi_*(E|_U) = T^{0,1}\phi(U),
\end{align*}
and the collection of all such charts forms a complex-analytic atlas for $M$. Otherwise said:
\begin{lemma}
There is a bijective correspondence between:
\begin{enumerate}[a)]
 \item Complex structures on $M$ compatible with its smooth structure;
 \item Involutive structures $E \subset T_{\mathbb{C}}M$ with $T_{\mathbb{C}}M=E \oplus \overline{E}$;
 \item Almost-complex structures $J:TM \to TM$ with vanishing Nijenhuis torsion.
\end{enumerate}
\end{lemma}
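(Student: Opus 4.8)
The plan is to establish the lemma as the composition of two separate equivalences: the purely differential-algebraic bijection between (b) and (c), and the bijection between (a) and (b), whose substance is exactly the Newlander--Nirenberg theorem \cite{NN} as restated above. Since both (b) and (c) are conditions imposed on the \emph{smooth} manifold $M$, while (a) is the datum of a finer (holomorphic) atlas, I would organize the argument so that (a)$\leftrightarrow$(b) carries all the analytic content and (b)$\leftrightarrow$(c) is elementary.

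For (b)$\leftrightarrow$(c): the paper already records that a complex subbundle $E$ satisfying \eqref{eq : acomplex as decomposition} is precisely the $-i$-eigenbundle of a unique almost-complex structure $J$, and conversely every such $J$ produces such an $E$; this is a fibrewise linear-algebra statement and gives a bijection at the level of bundles, before any integrability is imposed. It then remains to match the two integrability conditions. Here I would write a local frame of $E$ in the form $u+iJu$ with $u$ real, compute
\[
 [u+iJu,\,v+iJv]_{\mathbb{C}} = \big([u,v]-[Ju,Jv]\big) + i\big([u,Jv]+[Ju,v]\big),
\]
and observe that this again has the shape $w+iJw$ (hence lies in $E$) exactly when $[u,Jv]+[Ju,v]=J([u,v]-[Ju,Jv])$, which is the vanishing of the Nijenhuis torsion of $J$. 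Because the Lie bracket is Leibniz, it suffices to test involutivity on such a spanning set of sections, so involutivity of $E$ is equivalent to $\mathrm{N}(J)=0$; this matches (b) with (c).

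For (a)$\leftrightarrow$(b): I would define the map from (a) to (b) by sending a complex structure to $E:=T^{0,1}M$, which is involutive and satisfies \eqref{eq : acomplex as decomposition} because in a holomorphic chart it is framed by the $\partial/\partial\bar z_j$, whose complexified brackets vanish. This map is injective because $E$ determines the sheaf of holomorphic functions --- a function $f$ is holomorphic exactly when $\mathrm{d}f$ annihilates $E$ --- and so recovers the complex atlas. Surjectivity is precisely the restated Newlander--Nirenberg theorem: given involutive $E$ with $T_{\mathbb{C}}M=E\oplus\overline{E}$, around each point there is a chart $\phi$ with $\phi_*(E)=T^{0,1}\phi(U)$, and the theorem further guarantees that such charts are mutually holomorphic, so they assemble into a complex-analytic atlas whose $(0,1)$-bundle is $E$. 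Checking that the two assignments are mutually inverse is then bookkeeping: the round trip (a)$\to$(b)$\to$(a) returns the same holomorphic structure because $E=T^{0,1}M$ pins down holomorphicity, and (b)$\to$(a)$\to$(b) returns $E$ by construction of the charts.

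The genuinely hard analytic input --- the existence of holomorphic coordinates from formal integrability --- is entirely carried by Newlander--Nirenberg, which the paper invokes as a black box; so the main obstacle internal to this argument is merely the careful bracket computation for (b)$\leftrightarrow$(c), together with the verification that $E$ determines the holomorphic structure, ensuring the correspondence is a genuine bijection rather than just a surjection.
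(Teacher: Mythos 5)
Your proposal is correct and follows essentially the same route as the paper, which states this lemma without a formal proof as a summary (``Otherwise said:'') of the immediately preceding discussion: the fibrewise eigenbundle correspondence between (b) and (c), the equivalence of involutivity of $E$ with the vanishing of the Nijenhuis torsion of $J$, and the Newlander--Nirenberg theorem supplying the holomorphic atlas for (a)$\leftrightarrow$(b). Your bracket computation and the observation that $E$ determines the sheaf of holomorphic functions are exactly the standard details the paper leaves implicit.
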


\subsection*{CR structures}

An involutive structure $E \subset T_{\mathbb{C}}M$ with the property that
\begin{align*}
 E \cap \overline{E} = 0
\end{align*}
is called a {\bf CR structure} \cite{Chirka} on $M$ --- short at once for \emph{Cauchy-Riemann} or \emph{Complex-Real}. Complex structures are one source of examples. Another natural way in which these structures arise is the following: suppose $X$ is a complex manifold, and $M \subset X$ is an embedded real submanifold. Then
\begin{align*}
 & E \subset T_{\mathbb{C}}M, & E:=T^{0,1}X \cap T_{\mathbb{C}}M
\end{align*}
is a CR structure on $M$, provided that $E$ have constant rank. That is always the case is $M$ is a real hypersurface in $X$. Note also that, by (the evident scalar extension of) Proposition \ref{pro : libermann}, we have the following fact, which we record as a lemma:

\begin{lemma}\label{lem : inv pushes to CR}
 If an involutive structure $E \subset T_{\mathbb{C}}M$ is such that $E \cap \overline{E}$ corresponds to a simple foliation $\mathscr{F}$ on $M$, given by the fibres of the surjective submersion
\begin{align*}
 & p:M \to N, & \ker p_* = T\mathscr{F},
\end{align*}
then $p_*(E) \subset T_{\mathbb{C}}N$ defines a CR structure on $N$.
\end{lemma}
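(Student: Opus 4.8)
The plan is to recognize $p_*(E)$ as the tangent part of a complex Dirac structure produced by Libermann's theorem. Associate to $E$ its complex Dirac structure $\mathrm{Gr}(E)=E\oplus E^{\circ}$ (Example \ref{ex : IS}). Since by hypothesis $E\cap\overline{E}=T_{\mathbb{C}}\mathscr{F}\subseteq E$, the scalar extension of Example \ref{example : foliations 2} computes
\[
 \mathrm{Gr}(E)\circledast\mathrm{Gr}(\mathscr{F})=\bigl(E+T_{\mathbb{C}}\mathscr{F}\bigr)\oplus\bigl(E^{\circ}\cap(T_{\mathbb{C}}\mathscr{F})^{\circ}\bigr)=E\oplus E^{\circ}=\mathrm{Gr}(E),
\]
which is smooth and Dirac; hence $\mathrm{Gr}(E)$ and $\mathrm{Gr}(\mathscr{F})$ concur.

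Next I would invoke the scalar extension of Proposition \ref{pro : libermann}: as $\mathscr{F}$ is simple with $p:M \to N$, concurrence yields a complex Dirac structure $R:=p_!(\mathrm{Gr}(E))$ on $N$ for which $p:(M,\mathrm{Gr}(E))\to(N,R)$ is forward Dirac. The substance of the lemma is to identify $R$. A pointwise computation of the pushforward shows that $u+\xi\in E\oplus E^{\circ}$ is $p$-related to $v+\eta$ exactly when $v=p_*u$ and $\xi=p^*\eta$; since $p^*\eta\in E^{\circ}$ is equivalent to $\eta\in(p_*E)^{\circ}$, this gives
\[
 R=p_!(E\oplus E^{\circ})=p_*(E)\oplus(p_*E)^{\circ}=\mathrm{Gr}\bigl(p_*E\bigr).
\]

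It then remains to verify that $E':=p_*E$ is a CR structure. Smoothness of $E'$ I would deduce from smoothness of $R$: for a graph one has $\mathrm{pr}_T(R)=R\cap T_{\mathbb{C}}N=E'$, and since the fibre dimension of the image $\mathrm{pr}_T(R)$ is lower semicontinuous while that of $R\cap T_{\mathbb{C}}N=\ker(\mathrm{pr}_{T^*}|_R)$ is upper semicontinuous, $\dim E'$ is locally constant, so $E'$ is a smooth complex subbundle of $T_{\mathbb{C}}N$. As $R=\mathrm{Gr}(E')$ is a complex Dirac structure, the correspondence of Example \ref{ex : IS} forces $E'$ to be involutive.

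The one point genuinely requiring an argument — and which I expect to be the main obstacle — is the transversality $E'\cap\overline{E'}=0$. Here I would use that $p_*$ is the complexification of a real map, so $\overline{E'}=p_*\overline{E}$, together with $\ker p_*=T_{\mathbb{C}}\mathscr{F}=E\cap\overline{E}$. Given $v\in p_*(E)\cap p_*(\overline{E})$, write $v=p_*u=p_*\overline{w}$ with $u,w\in E$; then $u-\overline{w}\in\ker p_*=E\cap\overline{E}\subseteq E$, so $\overline{w}=u-(u-\overline{w})\in E$, whence $\overline{w}\in E\cap\overline{E}=\ker p_*$ and therefore $v=p_*\overline{w}=0$. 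This yields $E'\cap\overline{E'}=0$, so $E'=p_*E$ is a CR structure on $N$, completing the proof.
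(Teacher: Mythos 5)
Your proof is correct and follows essentially the same route as the paper, which disposes of this lemma in one line by citing ``the evident scalar extension of Proposition \ref{pro : libermann}''. Your write-up is precisely the detailed execution of that citation: observing that $T_{\mathbb{C}}\mathscr{F}\subseteq E$ forces $\mathrm{Gr}(E)\circledast\mathrm{Gr}(\mathscr{F})=\mathrm{Gr}(E)$ so that concurrence is automatic, identifying the pushforward as $\mathrm{Gr}(p_*E)$, and checking smoothness, involutivity and $p_*E\cap\overline{p_*E}=0$ --- all steps the paper leaves implicit.
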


\subsection*{Foliations with complex leaves} Suppose $\mathscr{F}$ is a foliation on $M$. A complex subbundle $E \subset T\mathscr{F} \otimes \mathbb{C}$ for which
\begin{align*}
 T\mathscr{F} \otimes \mathbb{C} = E \oplus \overline{E}
\end{align*}
is the $-i$-eigenbundle of a unique almost-complex structure $J:T\mathscr{F} \to T\mathscr{F}$. The condition that $E$ be involutive is equivalent to the vanishing of the Nijenhuis torsion of $J$. Otherwise said,

\begin{lemma}
 If a CR structure $E$ on $M$ is such that $E+\overline{E}$ is the scalar extension of the tangent bundle to a foliation, then the leaves of this foliation have the structure of complex manifolds.
\end{lemma}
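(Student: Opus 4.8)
The plan is to reduce the statement to a leafwise application of the Newlander–Nirenberg theorem recorded earlier in this section. First I would observe that the CR hypothesis $E \cap \overline{E} = 0$, together with the assumption that $E + \overline{E}$ is the scalar extension of $T\mathscr{F}$, yields the direct-sum decomposition
\begin{align*}
 T\mathscr{F}\otimes\mathbb{C} = E \oplus \overline{E}.
\end{align*}
This places us exactly in the setting of the preceding subsection on foliations with complex leaves: $E$ is the $-i$-eigenbundle of a unique fibrewise almost-complex structure $J:T\mathscr{F}\to T\mathscr{F}$, and since $E$ is involutive (being a CR structure), the Nijenhuis torsion of $J$ vanishes identically on $M$.

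Next I would restrict the data to a single leaf $i_S : S \to M$ of $\mathscr{F}$. Because $T\mathscr{F}|_S = TS$, the endomorphism $J$ restricts to an almost-complex structure $J_S : TS \to TS$ whose $-i$-eigenbundle is $E|_S \subset T_{\mathbb{C}}S$, so that $T_{\mathbb{C}}S = E|_S \oplus \overline{E|_S}$. The crucial point here is the compatibility between the ambient involutivity of $E$ and the leafwise one: since $E \subset T\mathscr{F}\otimes\mathbb{C}$, local sections of $E$ are tangent to $\mathscr{F}$, their complexified Lie brackets are again tangent to $\mathscr{F}$, and both operations commute with restriction to the leaf $S$. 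Hence $E|_S$ is an involutive structure on $S$; equivalently, the Nijenhuis torsion of $J_S$ vanishes.

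Finally I would invoke the Newlander–Nirenberg theorem in the form stated above: an involutive $E|_S$ with $T_{\mathbb{C}}S = E|_S \oplus \overline{E|_S}$ endows $S$ with a complex-analytic atlas for which $T^{0,1}S = E|_S$, i.e.\ makes $J_S$ integrable. Carrying this out for every leaf produces the asserted complex structures on the leaves of $\mathscr{F}$.

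The step I expect to require the most care is the passage from the ambient involutivity of $E$ on $M$ to the involutivity of $E|_S$ on each leaf --- that is, verifying that restriction of sections tangent to the foliation is compatible with the complexified bracket. This is standard foliation theory (the leaves are initial submanifolds and $T\mathscr{F}$ is involutive), but it is the one place where the foliated nature of the hypothesis genuinely enters, and one must be slightly careful since leaves need not be embedded.
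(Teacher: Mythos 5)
Your proposal is correct and follows the same route the paper takes (implicitly, since the paper states this lemma as a rephrasing of the preceding discussion): the CR condition upgrades $E+\overline{E}=T\mathscr{F}\otimes\mathbb{C}$ to a direct sum, giving a leafwise almost-complex structure whose Nijenhuis torsion vanishes by involutivity of $E$, and the leafwise Newlander--Nirenberg theorem finishes the argument. The restriction-to-leaves step you flag is indeed harmless, since the Nijenhuis torsion is a tensor and its vanishing is a pointwise condition.
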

Such involutive structures  --- also called \emph{Levi-flat CR structures} --- correspond to atlases $\mathfrak{A}=\{(U_i,\phi_i)\}$, where $\phi_i : U_i \to \mathbb{R}^q \times \mathbb{C}^n$, for which the transition functions are of the form \cite[Section 5]{Chirka}
\begin{align*}
 & \phi_{ij}:\phi_j(U_i \cap U_j) \to \phi_i(U_i \cap U_j), & \phi_{ij}(x,z) = (a(x),b(x,z)),
\end{align*}
where $b(x,\cdot)$ is holomorphic for each $x$. 

\subsection*{Transversally holomorphic foliations} Foliations $\mathscr{F}$ of (real) codimension $q$ on $M$ can be described in H\ae fliger fashion as follows. In a sufficiently small open neighborhood $U$ of a point $x \in M$, there is a submersion $s:U \to \mathbb{R}^q$ with $T\mathscr{F}|_U=\ker s_*$.  If $s':V \to \mathbb{R}^q$ is another such submersion, then
\begin{align*}
 s = \gamma s'
\end{align*}
for a unique map $\gamma$ of $U \cap V$ into germs of local diffeomorphisms of $\mathbb{R}^q$. A foliation of real codimension $2q$ presented by local submersions $s:U \to \mathbb{C}^q$ is {\bf transversally holomorphic} \cite{Gomez-Mont} if the maps $\gamma$ take values in germs of local biholomorphisms of $\mathbb{C}^q$. In that case, there is a corresponding involutive structure $E \subset T_{\mathbb{C}}M$, uniquely determined by the property that, for any defining submersion $s:U \to \mathbb{C}^q$, we have that $E|_U = s_*^{-1}(T^{0,1}\mathbb{C}^q)$. Observe that
\begin{align*}
 T\mathscr{F} \otimes \mathbb{C} = E \cap \overline{E}.
\end{align*}
Such foliations correspond to atlases $\mathfrak{A}=\{(U_i,\phi_i)\}$, where $\phi_i : U_i \to \mathbb{C}^q \times \mathbb{R}^n$, for which the transition functions are of the form
\begin{align*}
 & \phi_{ij}:\phi_j(U_i \cap U_j) \to \phi_i(U_i \cap U_j), & \phi_{ij}(z,y) = (a(z),b(z,y)),
\end{align*}
where $a$ is holomorphic. 

\subsection*{Nirenberg structures} Fix integers $n,d$ for which $2n+d \leqslant \dim M$. We call a {\bf Nirenberg structure} of type $(n,d)$ a maximal atlas $\mathfrak{A}=\{(U_i,\phi_i)\}$ of $M$, where the local charts are
\begin{align*}
 \phi_i : U_i \to \mathbb{R}^{\dim(M)-2n-d} \times \mathbb{C}^n \times \mathbb{R}^d,
\end{align*}
and the transition functions take the form
\begin{align*}
 & \phi_{ij}:\phi_j(U_i \cap U_j) \to \phi_i(U_i \cap U_j), & \phi_{ij}(x,z,y) = (a(x),b(x,z),c(x,z,y)),
\end{align*}
where $b(x,\cdot)$ is holomorphic for each $x$. 

Note that there is then a unique involutive structure $E \subset T_{\mathbb{C}}M$, such that
\begin{align*}
 E|_{U_i} = {\phi_i}_*^{-1}\left(\mathbb{R}^{\dim(M)-2n-d} \times T^{0,1}\mathbb{C}^n \times T_{\mathbb{C}}\mathbb{R}^d\right),
\end{align*}
and $E$ is such that
\begin{align*}
 & E \cap \overline{E} = T\mathscr{F}_{\star} \otimes \mathbb{C}, & E + \overline{E} = T\mathscr{F}_{\circledast} \otimes \mathbb{C}
\end{align*}
are the scalar extension of the tangent bundles to foliations $\mathscr{F}_{\circledast}$ and $\mathscr{F}_{\star}$ on $M$, locally described by
\begin{align*}
 & x = \ \text{constant}, & x = \ \text{constant}, \ z = \ \text{constant}, 
\end{align*}
respectively. Observe also that the holomorphic condition on the atlas ensures that the foliation $\mathscr{F}_{\star}$ induces on the leaves of the foliation $\mathscr{F}_{\circledast}$ a transversally holomorphic foliation. 

Observe also that most of our examples fit into this setting:
\begin{itemize}
 \item foliations are Nirenberg structures of type $(0,d)$;
 \item complex structures are Nirenberg structures of type $(\dim(M)/2,0)$;
 \item foliations with complex leaves are Nirenberg structures of type $(n,0)$;
 \item transversally holomorphic foliations are Nirenberg structures of type $(n,\dim(M)-2n)$.
\end{itemize}

In the language of concurrence, the main result of \cite{Nirenberg} can be rephrased as follows:

\begin{proposition}[Frobenius-Nirenberg theorem]
 For an involutive structure $E \subset T_{\mathbb{C}}M$, the following conditions are equivalent:
 \begin{enumerate}[i)]
  \item $\mathrm{Gr}(E)$ and $\mathrm{Gr}(\overline{E})$ concur;
  \item $E$ is a Nirenberg structure of type $(n,d)$ on $M$, where
  \begin{align*}
   & d=\mathrm{rank}_{\mathbb{C}}(E \cap \overline{E}), & n=\mathrm{rank}_{\mathbb{C}}(E)-d.
  \end{align*}
 \end{enumerate}
\end{proposition}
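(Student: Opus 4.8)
The plan is to reduce the equivalence to the classical Frobenius--Nirenberg integrability theorem by first extracting the purely algebraic meaning of concurrence. The key computation is the complex analogue of Example \ref{example : foliations 2}: regarding both $E$ and its conjugate $\overline{E}$ as complex subbundles of $T_{\mathbb{C}}M$, one has
\begin{align*}
 \mathrm{Gr}(E) \circledast \mathrm{Gr}(\overline{E}) = (E + \overline{E}) \oplus (E^{\circ} \cap \overline{E}^{\circ}) = (E+\overline{E}) \oplus (E+\overline{E})^{\circ} = \mathrm{Gr}(E + \overline{E})
\end{align*}
as Lagrangian families. By the complex version of Example \ref{ex : IS}, $\mathrm{Gr}(F)$ is a complex Dirac structure exactly when $F \subset T_{\mathbb{C}}M$ is a smooth complex subbundle whose sections are closed under $[\cdot,\cdot]_{\mathbb{C}}$. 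Hence the first step establishes the reduction: $\mathrm{Gr}(E)$ and $\mathrm{Gr}(\overline{E})$ concur if and only if $E + \overline{E}$ is a smooth involutive subbundle.

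With this in hand, the implication ii) $\Rightarrow$ i) is immediate: in Nirenberg coordinates $(x,z,y)$ the sum $E + \overline{E}$ is spanned by the $z$- and $y$-directions, so it coincides with $T\mathscr{F}_{\circledast} \otimes \mathbb{C}$, which is manifestly a smooth subbundle closed under $[\cdot,\cdot]_{\mathbb{C}}$; the reduction above then yields concurrence. For the substantive direction i) $\Rightarrow$ ii), I would first observe that smoothness of $E + \overline{E}$ forces $E \cap \overline{E}$ to be smooth as well, by the rank identity $\mathrm{rank}_{\mathbb{C}}(E \cap \overline{E}) = 2\,\mathrm{rank}_{\mathbb{C}}(E) - \mathrm{rank}_{\mathbb{C}}(E+\overline{E})$ together with the constancy of $\mathrm{rank}_{\mathbb{C}}(E)$. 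Both $E + \overline{E}$ and $E \cap \overline{E}$ are conjugation-invariant and involutive, hence are the scalar extensions $T\mathscr{F}_{\circledast} \otimes \mathbb{C}$ and $T\mathscr{F}_{\star} \otimes \mathbb{C}$ of real involutive distributions, which by the ordinary Frobenius theorem integrate to foliations, with $\mathscr{F}_{\star}$ refining $\mathscr{F}_{\circledast}$ and of the asserted ranks $d = \mathrm{rank}_{\mathbb{C}}(E \cap \overline{E})$ and $2n+d = \mathrm{rank}_{\mathbb{C}}(E + \overline{E})$.

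At this point the hypotheses are precisely those of Nirenberg's complex Frobenius theorem \cite{Nirenberg}: the subbundle $E$ is formally integrable (being an involutive structure) and $E + \overline{E}$ is involutive. I would invoke that theorem to produce, around each point, a chart $\phi : U \to \mathbb{R}^{\dim(M)-2n-d} \times \mathbb{C}^n \times \mathbb{R}^d$ carrying $E$ to the standard model $\langle \partial_{\bar z_1}, \dots, \partial_{\bar z_n}, \partial_{y_1}, \dots, \partial_{y_d}\rangle_{\mathbb{C}}$; the transition maps between two such charts are then forced to take the Nirenberg form $(x,z,y) \mapsto (a(x), b(x,z), c(x,z,y))$ with $b(x,\cdot)$ holomorphic, since they must simultaneously preserve $E$, its conjugate, and their intersection. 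This exhibits $E$ as a Nirenberg structure of type $(n,d)$ and completes the argument. The main obstacle is exactly this last step: all of the genuine analytic difficulty is concentrated in the integrability theorem of \cite{Nirenberg}, which we do not reprove. The contribution of the present formulation is the clean recognition, via the identity $\mathrm{Gr}(E) \circledast \mathrm{Gr}(\overline{E}) = \mathrm{Gr}(E + \overline{E})$, that Nirenberg's two separate integrability hypotheses are jointly equivalent to the single condition of concurrence.
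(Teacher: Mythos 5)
Your proof is correct, and it is organized differently from the paper's. Your opening reduction --- the pointwise identity $\mathrm{Gr}(E) \circledast \mathrm{Gr}(\overline{E}) = (E+\overline{E}) \oplus (E+\overline{E})^{\circ}$, so that concurrence is exactly the statement that $E+\overline{E}$ is a smooth involutive subbundle --- is the right algebraic heart of the matter, and from there you feed the hypotheses directly into Nirenberg's theorem as a black box, checking afterwards that the transition maps between adapted charts are forced into the form $(x,z,y)\mapsto(a(x),b(x,z),c(x,z,y))$ with $b(x,\cdot)$ holomorphic. The paper instead works with the tangent product: concurrence forces $\mathrm{Gr}(E)\star\mathrm{Gr}(\overline{E})=\mathrm{Gr}(E\cap\overline{E})$ to be smooth, hence the scalar extension of a foliation $\mathscr{F}_{\star}$, and (locally) one pushes $E$ forward along the leaf-space projection $p:M\to N$ to obtain, via Lemma \ref{lem : inv pushes to CR}, a CR structure $E'$ on $N$; concurrence downstairs means $E'\oplus\overline{E'}$ is tangent to a foliation with complex leaves (a Levi-flat CR structure), and the Nirenberg chart on $M$ is assembled from an adapted chart for $E'$ on $N$. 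Both arguments concentrate the analytic difficulty in the classical integrability results of \cite{NN} and \cite{Nirenberg}; yours is the more direct deduction and is more explicit about why the atlas has the required transition-function form, while the paper's route exposes the intermediate geometric structure (the quotient CR structure and its Levi-flat foliation) that explains where the two foliations $\mathscr{F}_{\star}$ and $\mathscr{F}_{\circledast}$ of a Nirenberg structure come from. Two small points worth making explicit in your write-up: the ``only if'' half of your claim that $\mathrm{Gr}(F)$ is Dirac exactly when $F$ is a smooth involutive subbundle requires the (easy) observation that $\langle [u,v],\xi\rangle=\xi([u,v])$ for $u,v\in\Gamma(F)$ and $\xi\in\Gamma(F^{\circ})$, so involutivity of the Lagrangian subbundle really does force involutivity of $F$; and the involutivity of $E\cap\overline{E}$ (needed so that $\mathscr{F}_{\star}$ is a foliation) follows from that of $E$ and $\overline{E}$ once its rank is known to be locally constant, which your rank identity supplies.
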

\begin{proof}
 The condition that $\mathrm{Gr}(E)$ and $\mathrm{Gr}(\overline{E})$ concur in particular implies that 
 \begin{align*}
  \mathrm{Gr}(E) \star \mathrm{Gr}(\overline{E}) = \mathrm{Gr}(E \cap \overline{E})
 \end{align*}is smooth, and is hence the scalar extension of a foliation $\mathscr{F}_{\star}$ on $M$. Suppose $\mathscr{F}_{\star}$ is a simple foliation given by the fibres of a surjective submersion
 \begin{align*}
  & p:M \to N, & \ker p_*=T\mathscr{F}_{\star}.
 \end{align*}
 By Lemma \ref{lem : inv pushes to CR}, $p_!\mathrm{Gr}(E) = \mathrm{Gr}(E')$ for a unique CR structure $E'\subset T_{\mathbb{C}}N$, and $\mathrm{Gr}(E)$ and $\mathrm{Gr}(\overline{E})$ concur exactly when $\mathrm{Gr}(E')$ and $\mathrm{Gr}(\overline{E'})$ concur. That is the case exactly when
 \begin{align*}
  T\mathscr{F}'\otimes \mathbb{C} = E' \oplus \overline{E'}
 \end{align*}
 for a foliation $\mathscr{F}'$ on $N$ with complex leaves, and
 \begin{align*}
  & T\mathscr{F}_{\circledast} \otimes \mathbb{C} = E + \overline{E}, & T\mathscr{F}_{\circledast}:=p_*^{-1}(T\mathscr{F}').
 \end{align*}
 Adapted coordinates $(x,z,y)$ for $E$ on $M$ can then be built out of adapted coordinates $(x,z)$ for $E'$ on $N$; observe that, in this coordinate system, $E$ is spanned by
 \begin{align*}
  \tfrac{\partial}{\partial \overline{z}_1}, \ \ \tfrac{\partial}{\partial \overline{z}_2}, \ \ \cdots \ \ \tfrac{\partial}{\partial \overline{z}_{n-d}}, \ \ \tfrac{\partial}{\partial y_1}, \ \ \tfrac{\partial}{\partial y_2}, \ \ \cdots \ \  \tfrac{\partial}{\partial y_d}.
 \end{align*}
\end{proof}

\section{Generalized complex structures}\label{sec : Generalized complex structures}

Our last illustration of classical instances of the concurrence relation concerns {\bf generalized complex structures} \cite{Gualtieri} --- that is, complex Dirac structures $L \subset \mathbb{T}_{\mathbb{C}}M$, which are transverse to their conjugate:
\begin{align*}
 L \oplus \overline{L} = \mathbb{T}_{\mathbb{C}}M.
\end{align*}

The description below of a generalized complex structure in terms of certain orthogonal endomorphisms $J$ of $\mathbb{T}M$ will be of use shortly:
\begin{lemma}\label{lem : GCS}
A complex Lagrangian subbundle $L \subset \mathbb{T}_{\mathbb{C}}M$ is transverse to its conjugate exactly when it is of the form
\begin{align*}
 L = \{J(a)+ia \ | \ a \in \mathbb{T}M\},
\end{align*}
where $J:\mathbb{T}M \to \mathbb{T}M$ is $\langle \cdot,\cdot\rangle$-orthogonal and squares to $-\mathrm{id}$. In the splitting $\mathbb{T}M = TM \oplus T^*M$, $J$ takes the form
\begin{align}\label{eq : J matrix}
 J = \begin{bmatrix}
      a & \pi^{\sharp} \\
      \omega^{\sharp} & -a^*
     \end{bmatrix}
\end{align}
where $a:TM \to TM$ is a linear map, $\omega \in \Omega^2(M)$ is a two-form and $\pi \in \mathfrak{X}^2(M)$ is a bivector, which are such that
\begin{align*}
 && a^2+\pi^{\sharp}\omega^{\sharp} = -\mathrm{id}, && a^*\omega^{\sharp}=\omega^{\sharp}a, && a\pi^{\sharp}=\pi^{\sharp}a^*.
\end{align*}
Moreover, $J$ is a generalized complex structure exactly when the following conditions are met:
\begin{enumerate}[C1)]
 \item $\pi$ is Poisson;
 \item the concomitant \eqref{eq : concomitant a pi} of $a$ and $\pi$ vanishes;
 \item $\mathrm{N}(a)(u,v) = -\pi^{\sharp}[u,v]^{\omega}$ for all $u,v \in \mathfrak{X}(M)$;
 \item the concomitant \eqref{eq : concomitant a omega} of $a$ and $\omega$ vanishes.
\end{enumerate}
\end{lemma}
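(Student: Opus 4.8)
The plan is to treat the three assertions in order. For the correspondence and the matrix form I would use the standard $\pm i$-eigenbundle dictionary. Given an orthogonal $J$ with $J^2=-\mathrm{id}$, one checks that $a \mapsto J(a)+ia$ maps $\mathbb{T}M$ isomorphically onto the $+i$-eigenbundle of the complexification of $J$; this image $L$ is isotropic because orthogonality together with $J^2=-\mathrm{id}$ makes $J$ skew-adjoint for $\langle\cdot,\cdot\rangle$, and $L\cap\overline{L}=0$ by comparing real and imaginary parts, so $L$ is a Lagrangian subbundle transverse to its conjugate. Conversely, multiplication by $i$ on $L$ and by $-i$ on $\overline{L}$ commutes with conjugation and so descends to a real orthogonal endomorphism $J$ with $J^2=-\mathrm{id}$. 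Writing $J$ as a block operator on $TM\oplus T^*M$ and imposing skew-adjointness forces the two off-diagonal blocks to be skew --- hence of the form $\omega^\sharp$ and $\pi^\sharp$ --- and the lower-right block to be $-a^*$; imposing $J^2=-\mathrm{id}$ then produces the three displayed relations, the fourth block identity being the adjoint of $a^2+\pi^\sharp\omega^\sharp=-\mathrm{id}$ and so redundant. This part is pure linear algebra.

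The content lies in the integrability conditions. A section of $\mathbb{T}_{\mathbb{C}}M$ lies in $L$ precisely when it is a $+i$-eigenvector of $J$, and expanding the $\mathbb{C}$-bilinear Dorfman bracket gives $[J(s)+is,J(t)+it]=([J(s),J(t)]-[s,t])+i([J(s),t]+[s,J(t)])$. Separating real and imaginary parts shows that $\Gamma(L)$ is involutive if and only if
\[ \mathcal{N}_J(s,t):=[J(s),J(t)]-J[J(s),t]-J[s,J(t)]+J^2[s,t] \]
vanishes for all $s,t\in\Gamma(\mathbb{T}M)$. Using the Leibniz rule for the Dorfman bracket together with the skew-adjointness of $J$ one verifies that $\mathcal{N}_J$ is $C^\infty(M)$-bilinear, so it is enough to evaluate it on the pure types of the splitting $\mathbb{T}M=TM\oplus T^*M$.

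Evaluating $\mathcal{N}_J$ on two vectors $u,v$ and substituting $J(u)=a(u)+\omega^\sharp(u)$, I would sort the outcome into its $TM$- and $T^*M$-components: the $TM$-component reproduces $\mathrm{N}(a)(u,v)+\pi^\sharp[u,v]^\omega$ and so vanishes exactly under C3), while the $T^*M$-component reproduces the concomitant \eqref{eq : concomitant a omega} and so vanishes exactly under C4). Symmetrically, evaluating on two covectors $\xi,\eta$ with $J(\xi)=\pi^\sharp(\xi)-a^*(\xi)$, the $TM$-component is the Jacobiator of the Koszul bracket of $\pi$ --- vanishing exactly when $\pi$ is Poisson, i.e.\ C1) --- and the $T^*M$-component is the concomitant \eqref{eq : concomitant a pi}, vanishing exactly under C2).

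The main obstacle is the mixed evaluation $\mathcal{N}_J(u,\eta)$, which a priori might impose further constraints. Here I would exploit the purely algebraic identity $\mathcal{N}_J(J(s),t)=-J\,\mathcal{N}_J(s,t)$, which holds for any bilinear bracket once $J^2=-\mathrm{id}$, to rewrite the mixed components as $J$-combinations of the pure ones. Feeding in the algebraic relations $a\pi^\sharp=\pi^\sharp a^*$, $a^*\omega^\sharp=\omega^\sharp a$ and $a^2+\pi^\sharp\omega^\sharp=-\mathrm{id}$ coming from the matrix form, together with C1)--C4) already extracted from the pure components, I expect the mixed components to vanish identically, so that they yield no new condition. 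This bookkeeping --- which is precisely the matrix computation underlying the description of generalized complex structures in the style of Magri--Morosi that the paper invokes later --- is where I anticipate the bulk of the work.
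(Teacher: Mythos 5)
Your linear-algebraic part is fine, and your reduction of involutivity to the vanishing of the tensor $\mathcal{N}_J$, evaluated on pure types, does reproduce C1)--C4) exactly: the $TM$- and $T^*M$-components of $\mathcal{N}_J(u,v)$ are (up to sign) $\mathrm{N}(a)(u,v)+\pi^{\sharp}[u,v]^{\omega}$ and $\mathrm{C}(a,\omega)(u,v)$ once one uses $a^2+\pi^{\sharp}\omega^{\sharp}=-\mathrm{id}$ and $a^*\omega^{\sharp}=\omega^{\sharp}a$, and dually for two covectors. Be aware that this is a genuinely different route from the paper, whose proof consists of citing Crainic's Proposition 2.2 for everything except a two-line verification that Crainic's fourth condition is equivalent to $\mathrm{C}(a,\omega)=0$; what your self-contained argument buys is independence from that reference, at the cost of having to handle the mixed components yourself.

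And that is where there is a genuine gap. The identity $\mathcal{N}_J(J(s),t)=-J\,\mathcal{N}_J(s,t)$ is correct, but it does not express mixed components in terms of pure ones: since $J(u)=a(u)+\omega^{\sharp}(u)$, it gives $\mathcal{N}_J(a(u),\eta)+\mathcal{N}_J(\omega^{\sharp}(u),\eta)=-J\mathcal{N}_J(u,\eta)$, which after discarding the pure term relates one \emph{mixed} component to another. Iterating and using $a^2=-\mathrm{id}-\pi^{\sharp}\omega^{\sharp}$ only yields $\mathcal{N}_J(\pi^{\sharp}\omega^{\sharp}(u),\eta)=0$, vacuous when $\pi^{\sharp}\omega^{\sharp}$ is not surjective. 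The extreme case $\pi=0$, $\omega=0$ (an almost complex structure $a$ viewed as a generalized almost complex structure) shows the mechanism really fails: there the identity collapses to the covariance relation $\mathcal{N}_J(a(u),\eta)=a^*\mathcal{N}_J(u,\eta)$, which never forces $\mathcal{N}_J(u,\eta)=0$. The ingredient you are missing is that the real trilinear form $T(s,t,r):=\langle \mathcal{N}_J(s,t),r\rangle$ is \emph{totally skew-symmetric}: writing $\sigma_+(s)=J(s)+is$, one checks that $[\sigma_+(s),\sigma_+(t)]=\sigma_+\bigl([J(s),t]+[s,J(t)]\bigr)+\mathcal{N}_J(s,t)$ with $\mathcal{N}_J(s,t)$ real, so $T$ is the imaginary part of $\Upsilon_L(\sigma_+(s),\sigma_+(t),\sigma_+(r))$, and $\Upsilon_L$ is totally skew on any isotropic subbundle. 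Since the pairing is nondegenerate, total skewness of $T$ lets you permute arguments and identify every mixed component $\langle\mathcal{N}_J(u,\eta),\cdot\rangle$ with one of the four pure components you already computed. With that substitution in place of the $J$-covariance argument, your proof closes.
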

\begin{proof}
 All of the claims are proven in \cite[Proposition 2.2]{Crainic}, except that in the stead of C4) we have in \emph{loc. cit.}
 \begin{align}
  \mathrm{d} \omega_a(u,v,w) & = \mathrm{d} \omega(a(u),v,w)+\mathrm{d} \omega(u,a(v),w)+\mathrm{d} \omega(u,v,a(w)) \label{eq : alt C4}
 \end{align}
 for all $u,v,w \in \mathfrak{X}(M)$, and where $\omega_a \in \Omega^2(M)$ is the two-form for which $\omega_a^{\sharp} = \omega^{\sharp}a$. Rewriting the LHS of \eqref{eq : alt C4} as  $\langle [u,v]^{\omega a},w\rangle$, and the RHS as
\begin{align*}
 \langle [a(u),v]^{\omega}+[u,a(v)]^{\omega}+a^*[u,v]^{\omega},w\rangle & = \langle [u,v]^{\omega a}-\mathrm{C}(a,\omega),w\rangle
\end{align*}
to conclude that \eqref{eq : alt C4} holds true exactly when $\mathrm{C}(a,\omega)$ vanishes identically.
\end{proof}

In a generalized complex structure, the Poisson structure $\pi$ is $a$-symmetric and has zero concomitant --- so $(\pi,a)$ need not be a $PN$-structure. Similarly, $(\omega,a)$ need not be a $\Omega N$-structure: not only $a$ need not be Nijenhuis, also $\omega$ need not be closed. As we show below, these conditions are simultaneously satisfied exactly when the generalized complex structure concurs with its conjugate:

\begin{proposition}[Concurrence of a generalized complex structure and its conjugate]
 For a generalized complex structure $L \subset \mathbb{T}_{\mathbb{C}}M$ corresponding
 \begin{align*}
  J = \begin{bmatrix}
      a & \pi^{\sharp} \\
      \omega^{\sharp} & -a^*
     \end{bmatrix},
 \end{align*}
 we have that
 \begin{align*}
  && L \star \mathcal{R}_{-1}(\overline{L}) = \mathrm{Gr}\left( \tfrac{1}{2i}\pi \right), && L \circledast \mathcal{R}_{-1}(\overline{L}) = \mathrm{Gr}\left( \tfrac{1}{2i}\omega \right).
 \end{align*}
 In particular,
 \begin{align*}
  && L \smile \overline{L} && \Longleftrightarrow && \mathrm{d} \omega = 0.
 \end{align*}
 Therefore for a generalized complex structure $J$ as above, the following conditions are equivalent:
 \begin{enumerate}[i)]
  \item $J$ concurs with its conjugate $\overline{J}=-J$;
  \item $(\pi,a)$ is a $PN$-structure and $(\omega,a)$ is a $\Omega N$-structure. 
 \end{enumerate}
\end{proposition}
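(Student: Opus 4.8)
The plan is to establish the two product formulas first, and then read off both the weak‑concurrence criterion and the equivalence (i)$\Leftrightarrow$(ii) from them. Since $J$ is generalized complex, $L$ and $\overline{L}$ are transverse, so both $L \star \mathcal{R}_{-1}(\overline{L})$ and $L \circledast \mathcal{R}_{-1}(\overline{L})$ are fibrewise Lagrangian of complex rank $\dim M$ (the complex form of Lemma \ref{lem : equivalent conditions for transversality revisited} even identifies them as graphs of smooth tensors). It therefore suffices to exhibit, for the cotangent product, the inclusion $\mathrm{Gr}(\tfrac{1}{2i}\omega) \subseteq L \circledast \mathcal{R}_{-1}(\overline{L})$, and dually for the tangent product; a rank count then forces equality and, as the right‑hand sides coincide with smooth graphs, also yields their smoothness for free.

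For the cotangent product I would feed the algebraic description \eqref{eq : explicit definition of circledast} a real tangent vector $u \in TM$. Writing $J(u)=a(u)+\omega^{\sharp}(u)$, the elements $J(u)+iu \in L$ and $J(u)-iu \in \overline{L}$ share the cotangent part $\omega^{\sharp}(u)$; as $\mathcal{R}_{-1}$ fixes cotangent parts, the cotangent‑matching condition defining $\circledast$ holds, and summing the two tangent parts $a(u)+iu$ and $-a(u)+iu$ produces $2iu+\omega^{\sharp}(u)$. Rescaling by the nonzero scalar $\tfrac{1}{2i}$ gives $u+\tfrac{1}{2i}\omega^{\sharp}(u)$, and complex‑linearity upgrades this from real $u$ to all of $T_{\mathbb{C}}M$, so $\mathrm{Gr}(\tfrac{1}{2i}\omega) \subseteq L \circledast \mathcal{R}_{-1}(\overline{L})$, hence equality. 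The tangent product is analogous but must absorb the sign flip that $\mathcal{R}_{-1}$ induces on tangent parts: for a real covector $\xi \in T^*M$ I would pair $J(\xi)+i\xi \in L$ against the element of $\mathcal{R}_{-1}(\overline{L})$ arising from $J(-\xi)-i(-\xi)\in\overline{L}$, both of which then have tangent part $\pi^{\sharp}(\xi)$; summing cotangent parts gives $\pi^{\sharp}(\xi)+2i\xi$, i.e. $\tfrac{1}{2i}\pi^{\sharp}(\xi)+\xi$ after rescaling, whence $L \star \mathcal{R}_{-1}(\overline{L}) = \mathrm{Gr}(\tfrac{1}{2i}\pi)$.

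With the formulas in hand, the weak‑concurrence statement follows by bridging $L \circledast \overline{L}$ to the computed $L \circledast \mathcal{R}_{-1}(\overline{L})$. By (the evident complex form of) Proposition \ref{pro : concurrence pairwise and scalars}(b), applied with $t_1=1$, $t_2=-1$, involutivity of $L \circledast \overline{L}$ is equivalent to involutivity of $L \circledast \mathcal{R}_{-1}(\overline{L}) = \mathrm{Gr}(\tfrac{1}{2i}\omega)$, and the graph of a two‑form is involutive exactly when the form is closed; since $\mathrm{d}(\tfrac{1}{2i}\omega)=\tfrac{1}{2i}\mathrm{d}\omega$, this reads $L \smile \overline{L} \Leftrightarrow \mathrm{d}\omega=0$. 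For the passage from weak to genuine concurrence, transversality makes $\delta_{T^*}\colon L \oplus \overline{L} \to T^*_{\mathbb{C}}M$ surjective, hence of constant rank, so Criterion \ref{circledast criterion} shows $L \circledast \overline{L}$ is automatically smooth; thus $J$ concurs with $\overline{J}$ iff $L \circledast \overline{L}$ is Dirac iff $\mathrm{d}\omega=0$.

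Finally, to match this with condition (ii) I would invoke the characterization C1)--C4) of Lemma \ref{lem : GCS}, valid for every generalized complex structure. Conditions C1), C2), C4) state precisely that $\pi$ is Poisson and that the concomitants $\mathrm{C}(a,\pi)$ and $\mathrm{C}(a,\omega)$ vanish; what is missing for (ii) is that $a$ be Nijenhuis and $\omega$ be closed. The crux is that C3) reads $\mathrm{N}(a)(u,v) = -\pi^{\sharp}[u,v]^{\omega} = -\pi^{\sharp}\,\mathrm{d}\omega(u,v)$, so $\mathrm{d}\omega=0$ forces $\mathrm{N}(a)=0$; hence $\mathrm{d}\omega=0$ makes $(\pi,a)$ a $PN$‑structure (C1, C2, Nijenhuis) and $(\omega,a)$ an $\Omega N$‑structure ($\omega$ closed, C4, Nijenhuis), while conversely (ii) demands $\omega$ closed. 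Therefore (ii) $\Leftrightarrow \mathrm{d}\omega=0 \Leftrightarrow$ (i). I expect the only delicate points to be the bookkeeping bridge through Proposition \ref{pro : concurrence pairwise and scalars} — so that the rescaled product $L \circledast \mathcal{R}_{-1}(\overline{L})$ genuinely detects concurrence of $L$ and $\overline{L}$ — and the recognition that C3) is exactly the identity turning closedness of $\omega$ into the Nijenhuis property of $a$, which is what collapses the two Magri--Morosi conditions in (ii) to the single scalar condition $\mathrm{d}\omega=0$.
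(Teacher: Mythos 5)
Your proposal is correct and follows essentially the same route as the paper: an explicit fibrewise computation of the two products (you exhibit the graphs inside $L \star \mathcal{R}_{-1}(\overline{L})$ and $L \circledast \mathcal{R}_{-1}(\overline{L})$ and conclude by a rank count, where the paper parametrizes $L$ and $\overline{L}$ by $\sigma_{\pm}(x)=Jx\pm ix$ and solves the matching condition --- the same computation read in the opposite direction), followed by the rescaling invariance of Proposition \ref{pro : concurrence pairwise and scalars} to reduce $L \smile \overline{L}$ to $\mathrm{d}\omega=0$, and a comparison with conditions C1)--C4) of Lemma \ref{lem : GCS}. Your two added remarks --- that transversality makes $L \circledast \overline{L}$ automatically smooth via Criterion \ref{circledast criterion}, so weak concurrence upgrades to genuine concurrence, and that C3) is precisely what converts $\mathrm{d}\omega=0$ into the Nijenhuis property of $a$ --- are correct and make explicit two points the paper leaves implicit.
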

\begin{proof}
Let $\sigma_{\pm}$ denote the complex-linear maps
\begin{align*}
 & \sigma_{\pm}: \mathbb{T}_{\mathbb{C}}M \to \mathbb{T}_{\mathbb{C}}M, & \sigma_{\pm}(x)=Jx\pm ix.
\end{align*}Then we have exact sequences
\begin{align*}
 & 0 \to \overline{L} \to \mathbb{T}_{\mathbb{C}}M \stackrel{\sigma_+}{\longrightarrow} L \to 0, & 0 \to L \to \mathbb{T}_{\mathbb{C}}M \stackrel{\sigma_-}{\longrightarrow} \overline{L} \to 0.
\end{align*}
In fact, because $L \cap \overline{L} = 0$, the restriction of $\sigma_{\pm}$ to $\mathbb{T}M$ gives isomorphisms of real vector bundles $L \stackrel{\sigma_+}{\longleftarrow} \mathbb{T}M \stackrel{\sigma_-}{\longrightarrow} \overline{L}$. Therefore any $(x,y) \in L \times \overline{L}$ is of the form
\begin{align*}
 x & =  \sigma_+(u+\xi) = (a(u)+\pi^{\sharp}(\xi)+iu)+(\omega^{\sharp}(u)-a^*(\xi)+i\xi), \\
 y & = \sigma_-(v+\eta) = (a(v)+\pi^{\sharp}(\eta)-iv)+(\omega^{\sharp}(v)-a^*(\eta)-i\eta)
\end{align*}
for a unique $(u+\xi,v+\eta) \in \mathbb{T}M^2$. Then $\mathrm{pr}_T(x-y)=0$ if and only if
\begin{align*}
 & u=-v, & 2a(u)=\pi^{\sharp}(\eta-\xi),
\end{align*}in which case
\begin{align*}
 x & = \left(\tfrac{1}{2}\pi^{\sharp}(\eta+\xi)+iu\right)+\left(\omega^{\sharp}(u)-a^*(\xi)+i\xi\right), \\ y & = \left(\tfrac{1}{2}\pi^{\sharp}(\eta+\xi)+iu\right)+\left(-\omega^{\sharp}(u)-a^*(\eta)-i\eta\right).
\end{align*}
Therefore
\begin{align*}
 x-\mathrm{pr}_{T^*}(y) = \left(\tfrac{1}{2}\pi^{\sharp}(\eta+\xi)+iu\right) + \left(2\omega^{\sharp}(u)+a^*(\eta-\xi)+i(\eta+\xi)\right)
\end{align*}lies in $\mathrm{Gr}(\tfrac{1}{2i}\pi)$, since
\begin{align*}
 \tfrac{1}{2i}\pi^{\sharp}(x-\mathrm{pr}_{T^*}(y)) & = \tfrac{1}{2i}\left(2\pi^{\sharp}\omega^{\sharp}(u)+\pi^{\sharp}a^*(\eta-\xi)+i\pi^{\sharp}(\eta+\xi)\right)\\
 & = \tfrac{1}{2i}\left(2(-u-a^2(u))+a\pi(\eta-\xi)+i\pi^{\sharp}(\eta+\xi)\right)\\
 & = \tfrac{1}{2i}\left(-2u+i\pi^{\sharp}(\eta+\xi)\right)\\
 & = \left(iu+\tfrac{1}{2}\pi^{\sharp}(\eta+\xi)\right).
\end{align*}
Therefore
\begin{align*}
 L \star \mathcal{R}_{-1}(\overline{L}) = \mathrm{Gr}(\tfrac{1}{2i}\pi).
\end{align*}
One proves symmetrically that
\begin{align*}
 L \circledast \mathcal{R}_{-1}(\overline{L}) = \mathrm{Gr}(\tfrac{1}{2i}\omega).
\end{align*}
Now observe that it follows from Proposition \ref{pro : concurrence pairwise and scalars} that
\begin{align*}
 && L \smile \overline{L} && \Longleftrightarrow && L \smile \mathcal{R}_{-1}(\overline{L}),
\end{align*}
and therefore $L$ and $\overline{L}$ concur exactly when $\mathrm{d} \omega = 0$. By comparison with the conditions C1)-C4) in the statement of Lemma \ref{lem : GCS}, we conclude that
\begin{align*}
 && L \smile \overline{L} && \Longleftrightarrow && (\pi,a) \ \text{is a $PN$-structure and} \ (\omega,a) \ \text{is a $\Omega N$-structure,}
\end{align*}
and this concludes the proof.
\end{proof}

\end{document}